\DeclarePairedDelimiter{\ip}\langle\rangle
\newcommand{\mypm}{\mathbin{\mathpalette\@mypm\relax}}
\newcommand{\@mypm}[2]{\ooalign{%
  \raisebox{.1\height}{$#1+$}\cr
  \smash{\raisebox{-.6\height}{$#1-$}}\cr}}
\def\@xfootnote[#1]{%
  \protected@xdef\@thefnmark{#1}%
  \@footnotemark\@footnotetext}
\DeclareMathOperator{\Gram}{Gram}
\DeclareMathOperator{\rank}{rank}
\DeclareMathOperator{\Cor}{Cor}
\DeclareMathOperator{\Clifford}{\mathcal C}
\DeclareMathOperator{\hcpsd}{cpsd-rank_\C}
\newcommand{\CSP}{\mathcal{CS}_{\hspace{-0.065em}+}}
\DeclareMathOperator{\cpsd}{cpsd-rank}
\DeclareMathOperator{\scpsd}{cpsd-rank_\R}
\newcommand{\rkpsd}{\text{\rm rank}_{\text{\rm psd}}}
\newcommand{\fib}{\operatorname{fib}}
\newcommand{\CP}{\mathcal{CP}}
\newcommand{\cprank}{\text{\rm cp-rank}}
\renewcommand{\S}{\mathcal{S}_+}
\newcommand{\R}{\mathbb{R}}
\newcommand{\K}{\mathbb{K}}
\newcommand{\C}{\mathbb{C}}
\newcommand{\T}{{\sf T}}
\DeclareMathOperator{\Tr}{Tr}
\renewcommand{\min}{\mathrm{min}}
\renewcommand{\max}{\mathrm{max}}
\renewcommand{\epsilon}{\varepsilon}
\renewcommand\Re{\operatorname{Re}}
\renewcommand\Im{\operatorname{Im}}
\newcommand{\ME}{{\mathcal E}}
\DeclareMathOperator{\Span}{Span}
\newcommand{\Diag}{\text{\rm Diag}}
\newtheorem{defin}{Definition}[section]
\newtheorem{proposition}[defin]{Proposition}
\newtheorem{theorem}[defin]{Theorem}
\newtheorem*{theorem*}{Theorem 1.1}
\newtheorem{corollary}[defin]{Corollary}
\newtheorem{lemma}[defin]{Lemma}
\newtheorem{claim*}{Claim}
\newtheorem{example}[defin]{Example}
\newtheorem{question}[defin]{Question}
\newtheorem{algorithm}[defin]{Algorithm}
\title{Matrices with high completely positive semidefinite rank}
\author[Sander Gribling]{\lowercase{\href{https://sites.google.com/site/sandergribling/}}{Sander Gribling$^{1,3}$}}
\author[David de Laat]{\lowercase{\href{http://www.daviddelaat.nl}}{David de Laat$^{1,3}$}}
\author[Monique Laurent]{\lowercase{\href{http://homepages.cwi.nl/~monique}}{Monique Laurent$^{1,2}$}}
\date{October 17, 2016}
\subjclass{15B48, 15A23, 90C22}
\keywords{completely positive semidefinite cone, matrix factorization, quantum correlations, Clifford algebras, Hadamard matrices}
\begin{document}

\begin{abstract}
A real symmetric matrix $M$ is completely positive semidefinite if it admits a Gram representation by (Hermitian) positive semidefinite matrices of any~size~$d$. The smallest such $d$ is called the (complex) completely positive semidefinite rank of $M$, and it is an open question whether there exists an upper bound on this number as a function of the matrix size. We construct completely positive semidefinite matrices of size $4k^2+2k+2$ with complex~completely positive semidefinite rank $2^k$ for any positive integer $k$. This shows that if such an upper bound exists, it has to be at least exponential in the matrix size. For this we exploit connections to quantum information theory and we construct extremal bipartite correlation matrices of large rank. We also exhibit a class of completely positive matrices with quadratic (in terms of the matrix size) completely positive rank, but with linear completely positive semidefinite rank, and we make  a connection to the existence of Hadamard matrices.
\end{abstract}

\footnotetext[1]{CWI, Amsterdam, The Netherlands}
\footnotetext[2]{Tilburg University, Tilburg, The Netherlands}
\footnotetext[3]{The work was supported by the Netherlands Organization for Scientific Research, grant number 617.001.351.}

\maketitle

\section{Introduction}

A matrix is said to be \emph{completely positive semidefinite} if it admits a Gram representation by (Hermitian) positive semidefinite matrices of any size. The $n \times n$ completely positive semidefinite matrices form a convex cone, called the completely positive semidefinite cone, which is denoted by $\CSP^n$. 

The motivation for the study of the completely positive semidefinite cone is twofold. Firstly, the completely positive semidefinite cone $\CSP^n$ is a natural analog of the completely positive cone $\CP^n$, which consists of the matrices admitting a factorization by nonnegative vectors. The cone $\CP^n$ is well studied (see, for example, the monograph 
\cite{BSM03}), and, in particular, it can be used to model classical graph parameters. For instance, \cite{dKP02} shows how to model the stability number of a graph as a conic optimization problem over the completely positive cone. A second motivation lies in the  connection to quantum information theory. Indeed, the cone $\CSP^n$ was introduced in \cite{LP15} to model quantum graph parameters (including quantum stability numbers) as conic optimization problems, an approach extended in \cite{MR15} for quantum graph homomorphisms and in \cite{Antonios:2015} for quantum correlations.

In this paper we are interested in the size of the factors needed in Gram representations of matrices. This type of question is of interest for factorizations by nonnegative vectors as well as by (Hermitian) positive semidefinite matrices.

Throughout we use the following notation. 
For $X,Y\in \C^{d\times d}$,  $X^*$ is the conjugate transpose and $\langle X, Y\rangle = \mathrm{Tr}(X^*Y)$ is the trace inner product. For vectors $u,v\in \R^d$, $\langle u,v\rangle= u^Tv$ denotes their Euclidean inner product.

A matrix $M$ is said to be \emph{completely positive} if there exist nonnegative vectors $v_1,\ldots,v_n \in \R_+^d$ such that $M_{i,j} = \langle v_i, v_j \rangle$ for all $i,j\in [n]$. We call such a set of vectors a \emph{Gram representation} or \emph{factorization} of $M$ by nonnegative vectors. The smallest $d$ for which these vectors exist is denoted by $\cprank(M)$ and is called the \emph{completely positive rank} of $M$. 

Similarly, a matrix $M$ is called \emph{completely positive semidefinite} if there exist (real symmetric or complex Hermitian) positive semidefinite $d \times d$ matrices $X_1,\ldots,X_n$ such that $M_{i,j} = \langle X_i, X_j \rangle$ for all $i,j\in [n]$. We call such a set of matrices a \emph{Gram representation} or \emph{factorization} of $M$ by (Hermitian) positive semidefinite matrices. The smallest $d$ for which there exists a Gram representation of $M$ by Hermitian positive semidefinite $d \times d$ matrices is denoted by $\hcpsd(M)$, and the smallest $d$ for which these matrices can be taken to be real is denoted by $\scpsd(M)$. We call this the \emph{real/complex completely positive semidefinite rank} of $M$. If a matrix has a factorization by Hermitian positive semidefinite matrices, then it also has a factorization by real positive semidefinite matrices.  In fact, for every $M \in \CSP^n$, we have
\[
\hcpsd(M) \leq \scpsd(M) \leq 2 \hcpsd(M)
\]
(see Section~\ref{sec:some properties}).

By construction, we have the inclusions
\[
\CP^n\subseteq \CSP^n \subseteq \mathcal S^n_+\cap\R^{n\times n}_+,
\]
where $\mathcal S_+^n$ is the cone of (real) positive semidefinite $n \times n$ matrices. The three cones coincide for $n\le 4$ (since doubly nonnegative matrices of size $n\le 4$ are completely positive), but both inclusions are strict for $n\ge 5$ (see \cite{LP15} for details).

By Carath\'eodory's theorem, the completely positive rank of a matrix in $\CP^n$ is at most $\binom{n+1}{2}+1$. In \cite{Shaked-Monderer} the following stronger bound is given:
\begin{equation}\label{eqcprank}
\cprank(M) \leq {n+1 \choose 2}-4 \quad \text{for} \quad M \in \mathcal{CP}^n \quad \text{and} \quad n \geq 5,
\end{equation}
which is also not known to be tight. No upper bound (as a function of $n$) is known for the completely positive semidefinite rank of matrices in $\CSP^n$. It is not even known whether such a bound exists. A positive answer would have strong implications. It would imply that the cone $\CSP^n$ is closed. This, in turn, would imply that the set of quantum correlations is closed, since it can be seen as a projection of an affine slice of the completely positive semidefinite cone (see \cite{MR14,Antonios:2015}). Whether the set of quantum correlations is closed is an open question in quantum information theory.
In contrast, as an application of the upper bound \eqref{eqcprank}, the completely positive cone $\CP^n$ is easily seen to be closed. A description of the closure of the completely positive semidefinite cone in terms of factorizations by positive elements in von Neumann algebras can be found in \cite{STM:15}. Such factorizations were used to show a separation between the closure of $\CSP^n$ and the doubly nonnegative cone $\smash{\mathcal S^n_+\cap \R^{n\times n}_+}$ (see \cite{FW14,LP15}).

\medskip
In this paper we show that if an upper bound exists for the completely positive semidefinite rank of matrices in $\CSP^n$, then it needs to grow at least exponentially in the matrix size $n$. Our main result is the following:

\begin{theorem*}\label{theomain}
For each positive integer $k$, there exists a completely positive semidefinite matrix $M$ of size $4k^2 +2k +2$ with $\hcpsd(M) =  2^k$.
\end{theorem*}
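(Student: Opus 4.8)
The plan is to take $M$ to be a normalization of the synchronous quantum correlation attached to the generators of the complex Clifford algebra $\Clifford_{2k}$. The upper bound $\hcpsd(M)\le 2^k$ will be the explicit Clifford representation, while the lower bound $\hcpsd(M)\ge 2^k$ will follow because every nonzero complex representation of the simple algebra $\Clifford_{2k}\cong\C^{2^k\times 2^k}$ has dimension a multiple of $2^k$. The whole point is to rig $M$ so that, in \emph{any} Gram representation by positive semidefinite matrices, the anticommutation relations among the Clifford generators are forced.

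For the construction, work in $\C^{2^k}$ with the Jordan--Wigner operators $\gamma_1,\dots,\gamma_{2k}$: each is Hermitian with $\gamma_a^2=I$, $\Tr\gamma_a=0$, and $\gamma_a\gamma_b+\gamma_b\gamma_a=0$ for $a\ne b$. I would use $2k^2+k+1$ two-outcome ``settings'': a trivial one with observable $O_0:=I$; one per $a\in[2k]$ with observable $O_a:=\gamma_a$; and one per pair $i<j$ with observable $O_{ij}:=\tfrac{1}{\sqrt2}(\gamma_i+\gamma_j)$. Each $O_x$ is Hermitian with $O_x^2=I$, so $E^x_\pm:=\tfrac12(I\pm O_x)$ are complementary orthogonal projections. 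Indexing $M$ by the $4k^2+2k+2$ pairs $(x,\epsilon)$ with $\epsilon\in\{+,-\}$, set $M_{(x,\epsilon),(y,\delta)}:=2^{-k}\Tr(E^x_\epsilon E^y_\delta)$; this is a real symmetric matrix, and the Hermitian positive semidefinite $2^k\times 2^k$ matrices $X_{(x,\epsilon)}:=2^{-k/2}E^x_\epsilon$ form a Gram representation of it, so $M\in\CSP^{4k^2+2k+2}$ and $\hcpsd(M)\le 2^k$.

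For the lower bound, let $\{X_{(x,\epsilon)}\}$ be an arbitrary Gram representation of $M$ by Hermitian positive semidefinite $d\times d$ matrices. First, the ``synchronicity'' entries $M_{(x,+),(x,-)}=0$ and $X_{(x,\pm)}\succeq0$ give $X_{(x,+)}X_{(x,-)}=0$ (using $\Tr(AB)=0\Rightarrow AB=0$ for $A,B\succeq0$). Next, the marginal identities $\sum_\epsilon M_{(x,\epsilon),(y,\delta)}=M_{(y,\delta),(y,\delta)}$ — both sides equal $2^{-k}\Tr E^y_\delta$ — force $E:=X_{(x,+)}+X_{(x,-)}$ to be an $x$-independent positive semidefinite operator (equal to $X_{(0,+)}$), nonzero, and $X_{(x,\epsilon)}\preceq E$, so we may restrict to $\operatorname{range}(E)\cong\C^{d'}$ with $d'\le d$ and $E\succ0$ there. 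Writing $Y_x:=X_{(x,+)}-X_{(x,-)}$, the first step gives $[Y_x,E]=0$ and $Y_x^2=E^2$, so $O_x:=E^{-1/2}Y_xE^{-1/2}$ is a Hermitian operator on $\C^{d'}$ with $O_x^2=I$. Finally the prescribed entries of $M$ say exactly that $\Tr(Y_iY_j)=\delta_{ij}$, $\Tr(Y_{ij}Y_i)=\Tr(Y_{ij}Y_j)=\tfrac1{\sqrt2}$ and $\Tr(Y_{ij}^2)=1$, which give $\Tr\big((Y_{ij}-\tfrac1{\sqrt2}(Y_i+Y_j))^2\big)=0$, hence $Y_{ij}=\tfrac1{\sqrt2}(Y_i+Y_j)$, hence $O_{ij}=\tfrac1{\sqrt2}(O_i+O_j)$; squaring and using $O_{ij}^2=I$ forces $O_iO_j+O_jO_i=0$. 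Thus $O_1,\dots,O_{2k}$ generate a nonzero representation of $\Clifford_{2k}\cong\C^{2^k\times 2^k}$ on $\C^{d'}$, whence $d\ge d'\ge2^k$; combined with the upper bound, $\hcpsd(M)=2^k$.

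The step I expect to be the main obstacle is the lower bound, specifically making the passage from the linear structure of $M$ to rigid operator identities watertight: one has to check that the marginal relations really do collapse $\sum_\epsilon X_{(x,\epsilon)}$ to a single $x$-independent $E$, that restricting to $\operatorname{range}(E)$ and conjugating by $E^{-1/2}$ (legitimate precisely because $E$ commutes with every $Y_x$) leaves the correlator values of $M$ intact, and that nothing beyond the equality ``$M$ equals the stated matrix'' is used. The other ingredients are routine: the Jordan--Wigner realization, and the classification $\Clifford_{2k}\cong\C^{2^k\times 2^k}$. It is worth noting that $\rank M=2k+1$ (the Gram vectors span $\Span\{I,\gamma_1,\dots,\gamma_{2k}\}$), which is what exhibits the exponential gap between $\hcpsd(M)=2^k$ and the size $4k^2+2k+2$ of $M$; and the $O_x$ can equivalently be packaged as a bipartite correlation matrix, which is the ``extremal correlation'' viewpoint alluded to in the abstract and gives an alternative, rigidity-based route to the dimension lower bound.
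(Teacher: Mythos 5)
Your proof is correct and achieves the same size $4k^2+2k+2$ and rank $2^k$, but it takes a genuinely different and considerably more direct route than the paper's. The paper reaches its $M$ through a chain of reductions in the bipartite setting: it constructs an extreme point $C$ of $\Cor\big(2k,\binom{2k}{2}+1\big)$ of rank $2k$ saturating Tsirelson's bound $\binom{r+1}{2}\le m+n-1$ (via a primal--dual pair $E_1,\Omega_1$ certifying unique extendability in the elliptope), invokes Tsirelson's theorem that every nondegenerate commuting operator representation of an extreme bipartite correlation is Clifford, deduces from this together with Clifford representation theory that every tensor operator representation of $C$ has local dimension at least $2^k$, upgrades $C$ to a two-outcome quantum correlation $p$ inheriting this lower bound, and finally applies the Sikora--Varvitsiotis correspondence between quantum correlations realizable in local dimension $d$ and $\CSP$-factorizations of size $d$. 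You bypass all of this by working synchronously: your $M$ is the normalized-trace Gram matrix of the projections $\tfrac12(I\pm O_x)$ built from Jordan--Wigner observables, and the lower bound is proved hands-on from the entries of $M$ alone. The zero entries force $X_{(x,+)}X_{(x,-)}=0$; the marginal equalities force $X_{(x,+)}+X_{(x,-)}$ to equal the $x$-independent matrix $E=X_{(0,+)}$ (your ``collapse'' step --- it does check out, e.g.\ via $\nrm{\sum_\epsilon X_{(x,\epsilon)}-X_{(0,+)}}^2=0$, which the entries of $M$ make a direct computation); restricting to $\operatorname{range}(E)$ and conjugating by $E^{-1/2}$ is legitimate precisely because $[Y_x,E]=0$ and $Y_x^2=E^2$, giving Hermitian involutions $O_x$; and the identities $\Tr\bigl((Y_{ij}-\tfrac1{\sqrt2}(Y_i+Y_j))^2\bigr)=0$ force $O_{ij}=\tfrac1{\sqrt2}(O_i+O_j)$, hence the anticommutation relations, hence $d\ge 2^k$ since $\Clifford(2k)\cong\C^{2^k\times 2^k}$ is simple. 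All the steps you flagged as delicate go through. What your route buys is a short, self-contained proof with no elliptope extreme-point theory, no universal rigidity, no Tsirelson Clifford theorem, and no quantum-correlation formalism; what it trades away is the paper's structural by-products (the proof of Tsirelson's bound, explicit optimal extreme bipartite correlation families, and the unified view of the V\'ertesi--P\'al and Slofstra constructions via XOR games), though both arguments ultimately rest on the same algebraic input $\Clifford(2k)\cong\C^{2^k\times 2^k}$.
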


The proof of this result relies on a connection with quantum information theory and geometric properties of (bipartite) correlation matrices.
We refer to the main text for the definitions of quantum and bipartite correlations. A first basic ingredient is the fact from \cite{Antonios:2015} that a quantum correlation $p$ can be realized in local dimension $d$ if and only if there exists a certain completely positive semidefinite matrix $M$ with $\hcpsd(M)$ at most $d$. Then, the key idea is to construct a class of quantum correlations $p$ that need large local dimension. 

The papers \cite{VertesiPal, Slofstra11, Ji13} each use different techniques to show the existence of different quantum correlations that require large local dimension. Our main contribution  is to provide a unified, explicit construction of the quantum correlations from \cite{VertesiPal} and \cite{Slofstra11}, which uses the seminal work of Tsirelson \cite{Tsirelson:87,Tsirelson} combined with convex geometry and recent insights from rigidity theory. In addition,  we also give an explicit proof of Tsirelson's bound (see Corollary~\ref{corbound}) and we show examples where the bound is tight.

More specifically, we construct such quantum correlations from bipartite correlation matrices. For this we use the classical results of Tsirelson \cite{Tsirelson:87,Tsirelson}, which characterize bipartite correlation matrices in terms of operator representations and, using Clifford algebras, 
 we relate  the rank of extremal bipartite correlations to the local dimension of their operator representations. In this way we reduce the problem to finding bipartite correlation matrices that are extreme points of the set of bipartite correlations and have large rank.

\medskip
For the completely  positive rank we have the quadratic upper bound \eqref{eqcprank}, and completely positive matrices have been constructed whose completely positive rank grows quadratically in the size of the matrix. This is the case, for instance, for the matrices
\[
M_k=\begin{pmatrix} I_k & {1\over k}J_k\cr {1\over k}J_k & I_k\end{pmatrix}\in \CP^{2k},
\]
where $\cprank(M_k)=k^2$. Here $I_k\in \mathcal S^k$ is the identity matrix and $J_k\in\mathcal S^k$ is the all-ones matrix. This leads to the natural question of how fast $\scpsd(M_k)$ and $\hcpsd(M_k)$ grow. As a second result we show that the completely positive semidefinite rank grows linearly for the matrices $M_k$, and we exhibit a link to the question of existence of Hadamard matrices. More precisely, we show that $\hcpsd(M_k) = k$ for all $k$, and  $\scpsd(M_k)=k$ if and only if there exists a real Hadamard matrix of order $k$. In particular, this shows that the real and complex completely positive semidefinite ranks can be different.

\medskip
The completely positive and completely positive semidefinite ranks are symmetric analogs of the nonnegative and  positive semidefinite ranks. Here the nonnegative rank, denoted $\mathrm{rank}_+(M)$, of a matrix $M \in \R_+^{m \times n}$, is the smallest integer $d$ for which there exist nonnegative vectors $\{u_i\}$ and $\{v_j\}$ in $\smash{\R_+^d}$ such that $M_{i,j} = \langle u_i, v_j \rangle$ for all $i$ and $j$, and the positive semidefinite rank, denoted $\mathrm{rank}_{\mathrm{psd}}(M)$, is the smallest $d$ for which there exist positive semidefinite matrices $\{X_i\}$ and $\{Y_j\}$ in $\mathcal S_+^d$ such that $M_{i,j} = \langle X_i, Y_j \rangle$ for all $i$ and $j$. These notions have many applications, in particular to communication complexity and for the study of efficient linear or semidefinite extensions of convex polyhedra (see \cite{Ya91,GPT13}). Unlike in the symmetric setting, in the asymmetric setting the following bounds, which show a linear regime, can easily be checked:
\[ \rkpsd(M) \leq \rank_+(M) \le  \min(m,n). \]
We refer to \cite{psdrank} and the references therein for a recent overview of results about the positive semidefinite rank.
 
\bigskip\noindent
\textbf{Organization of the paper.} 
In Section~\ref{sec:some properties} we first present some simple properties of the (real/complex) completely positive semidefinite rank, and then investigate its value for the matrices $M_k$, where we also show a link to the existence of Hadamard matrices. We also give a simple heuristic for finding approximate positive semidefinite factorizations.
The proof of our main  result in Theorem \ref{theomain} boils down to several key ingredients which we treat in the subsequent sections. 

In Section \ref{secCmn} we group old and new results about the set of  bipartite correlation matrices.
In particular, we give a geometric characterization of the extreme points, we revisit some conditions due to Tsirelson and links to rigidity theory, and we construct a class of extreme bipartite correlations with optimal parameters.

In Section \ref{secquantumcor} we recall some characterizations, due to Tsirelson,  of bipartite correlations in terms of operator representations. We also recall  connections  to Clifford algebras, and for  bipartite correlations that are extreme points we relate their rank to the dimension of their operator representations.

Finally in Section \ref{secfinal} we introduce quantum correlations and recall their link to completely positive semidefinite matrices. We show how to construct quantum correlations  from bipartite correlation matrices, and we prove the main theorem.

\bigskip\noindent
\textbf{Note.} 
Upon completion of this paper we learned of the recent independent work  \cite{PrakashSikoraVarvitsiotisWei}, where a class of matrices with exponential $\cpsd$ is also constructed. The key idea of using extremal bipartite correlation matrices having large rank is the same. Our construction uses bipartite correlation matrices with optimized parameters meeting Tsirelson's upper bound (\ref{eqrankC}) (see Corollary \ref{corbound}). As a consequence, our completely positive semidefinite matrices have the best ratio between $\cpsd$ and size that can be obtained using this technique. 

\section{Some properties of the completely positive semidefinite rank}
\label{sec:some properties}

In this section we consider the (complex) completely positive semidefinite rank of matrices in the completely positive cone. In particular, for a class of matrices, we show a quadratic separation in terms of the matrix size between the completely positive and completely positive semidefinite ranks. We also mention a simple heuristic for building completely positive semidefinite factorizations, which we have used to test several explicit examples.

We start by collecting some simple properties of the (complex) completely positive semidefinite rank. 
A first observation is that if a matrix $M$ admits a Gram representation by Hermitian positive semidefinite matrices of size $d$, then it also admits a Gram representation by real symmetric positive semidefinite matrices of size $2d$, and thus 
$M$ is completely positive semidefinite  with $\scpsd(M) \leq 2d$. This is based on the well-known fact that mapping a Hermitian $d \times d$ matrix $X$ to 
\[
\frac{1}{\sqrt{2}} \begin{pmatrix} \Re(X) & \Im(X)\\ \Im(X)^\T & \Re(X) \end{pmatrix} \in \mathcal S^{2d}
\]
is an isometry that preserves positive semidefiniteness. The (complex) completely positive semidefinite rank is subadditive; that is, for $A, B \in \CSP^n$ and $\K=\R$ or $\K=\C$, we have
\[
\cpsd_{\K}(A+B) \leq \cpsd_{\K}(A) + \cpsd_{\K}(B),
\]
which can be seen as follows: If $A$ is the Gram matrix of $X_1,\ldots,X_n \in \K^{k \times k}$ and $B$ is the Gram matrix of $Y_1,\ldots,Y_n \in \K^{r \times r}$, then $A+B$ is the Gram matrix of the matrices $X_1\oplus Y_1,\ldots,X_n \oplus Y_n \in \K^{(k+r) \times (k+r)}$. 

For $M \in \CSP^n$ we have the inequalities
\[
\binom{\scpsd(M)+1}{2} \geq \mathrm{rank}(M) \quad \text{and} \quad \hcpsd(M)^2 \geq \mathrm{rank}(M),
\]
since  a factorization of $M$ by  real symmetric (resp., complex Hermitian) positive semidefinite $r \times r$ matrices  yields another factorization of $M$ by real vectors of size ${r+1\choose 2}$ (resp.,  by real vectors of size $r^2$).

 Finally, the next lemma shows that  if the (complex) completely positive semidefinite rank of a matrix is high, then each factorization by (Hermitian) positive semidefinite matrices must contain at least one matrix with  high rank. 
\begin{lemma}
Let $M \in \CSP^n$. For each Gram representation of $M$ by (Hermitian) positive semidefinite matrices $X_1,\ldots,X_n\in \K^{d\times d}$, with $\K \in \{\R, \C\}$, we have 
\[
\cpsd_{\K}(M) \leq \rank(X_1+\ldots +X_n).
\]
\end{lemma}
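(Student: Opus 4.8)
The plan is to compress the given factorization to the range of the sum $S := X_1 + \dots + X_n$. Since each $X_i$ is positive semidefinite, so is $S$, and moreover $S - X_i = \sum_{j \neq i} X_j \succeq 0$, i.e.\ $X_i \preceq S$. For positive semidefinite matrices this forces $\ker(S) \subseteq \ker(X_i)$, equivalently $\operatorname{range}(X_i) \subseteq \operatorname{range}(S)$ for every $i \in [n]$. Set $r := \rank(S)$.

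Next I would fix a matrix $U \in \K^{d \times r}$ whose columns form an orthonormal basis of $\operatorname{range}(S)$, so that $U^*U = I_r$ and $P := UU^*$ is the orthogonal projection of $\K^d$ onto $\operatorname{range}(S)$. Define $\tilde X_i := U^* X_i U \in \K^{r \times r}$; each $\tilde X_i$ is positive semidefinite because $X_i$ is.

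It remains to verify that $\tilde X_1, \dots, \tilde X_n$ is again a Gram representation of $M$. Since $X_i$ is Hermitian with $\operatorname{range}(X_i) \subseteq \operatorname{range}(P)$, we have $P X_i = X_i = X_i P$, so, using the cyclicity of the trace,
\[
\langle \tilde X_i, \tilde X_j \rangle = \Tr\!\big(U^* X_i U U^* X_j U\big) = \Tr\!\big(P X_i P X_j\big) = \Tr(X_i X_j) = M_{i,j}.
\]
Hence $M$ admits a factorization by positive semidefinite $r \times r$ matrices over $\K$, and therefore $\cpsd_{\K}(M) \le r = \rank(X_1 + \dots + X_n)$.

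The only substantive point is the first step — the fact that a sum of positive semidefinite matrices contains the range of each of its summands; everything afterwards is routine bookkeeping about restricting to a subspace, and I do not anticipate any real obstacle.
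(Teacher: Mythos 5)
Your proof is correct and takes essentially the same approach as the paper: both compress the factorization onto the orthogonal complement of the common kernel $\bigcap_i\ker(X_i)$, which coincides with $\mathrm{range}(X_1+\dots+X_n)$ because the $X_i$ are positive semidefinite. The only cosmetic difference is in how the trace identity is verified: you use $P X_i = X_i P = X_i$ together with cyclicity of the trace, while the paper extends $U$ to a full unitary $P=(U\ V)$ and uses $X_i V = 0$.
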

\begin{proof}
Let $v_1,\ldots,v_{d-k}$ be an orthonormal basis of $\ker(X_1) \cap \ldots \cap \ker(X_n)$, and let $u_1,\ldots,u_k$ be an orthonormal basis of $(\ker(X_1) \cap \ldots \cap \ker(X_n))^\perp$. Let $U$ be the $d\times k$ matrix with columns $u_1,\ldots,u_k$, and let $V$ be the $d \times (d-k)$ matrix with columns $v_1,\ldots,v_{d-k}$, so that $P = \begin{pmatrix}
	U& \hspace{-0.5em}V \end{pmatrix}$ is an orthogonal matrix. 
	Set $Y_i = U^* X_i U\in \K^{k\times k}$ for $i\in [n]$. Then $Y_i$ is (Hermitian) positive semidefinite. 
%Since $\begin{pmatrix}
%	U& \hspace{-0.5em}V \end{pmatrix}$ is an orthogonal matrix we have
%\[
%\big\langle Y_i, Y_j \big\rangle = \big\langle U^* X_i U, U^* X_j U \big\rangle = \left\langle \begin{pmatrix}
%U^* \\ V^* \end{pmatrix}  X_i\begin{pmatrix}
%U& \hspace{-0.5em}V \end{pmatrix},  \begin{pmatrix}
%U^* \\ V^* \end{pmatrix} X_j \begin{pmatrix}
%U &\hspace{-0.5em} V \end{pmatrix} \right\rangle = \big\langle X_i, X_j \big\rangle
%\] 
%for all $i,j \in [n]$, which shows $M = \mathrm{Gram}(Y_1,\ldots,Y_n)$. 
Since $X_iV=0$ by construction, we have
\[
\big\langle Y_i, Y_j \big\rangle = \big\langle U^* X_i U, U^* X_j U \big\rangle = 
 \big\langle P^* X_i P, P^* X_j P \big\rangle=
 \big\langle X_i,X_j\big\rangle
 \]
for all $i,j \in [n]$, which shows $M = \mathrm{Gram}(Y_1,\ldots,Y_n)$.
We have 
\[
\cpsd_{\mathbb K}(M) \leq k = n - \dim(\ker(X_1) \cap \ldots \cap \ker(X_n)),
\]
and because the matrices $X_1,\ldots,X_n$ are positive semidefinite, the right hand side is equal to
\[
n - \dim(\ker(X_1 + \ldots + X_n)) = \rank(X_1 + \ldots + X_n).\qedhere
\]
\end{proof}

\subsection{A connection to the existence of Hadamard matrices}
\label{sec:hadamard}

Consider the $2k \times 2k$ matrix 
\[ 
M_k = \begin{pmatrix} I_k & \frac{1}{k} J_k \\ \frac{1}{k} J_k & I_k \end{pmatrix},
\]
where $I_k$ is the $k \times k$ identity matrix and $J_k$ the $k \times k$ all-ones matrix.
The completely positive rank of $M_k$ equals $k^2$, which is well known and easy to check  (see Proposition \ref{cpranknsquared} below). This means the completely positive rank of these matrices is within a constant factor of the upper bound \eqref{eqcprank}. The significance of the matrices $M_k$ stems from the recently disproved (see \cite{Bomze, Bomze2}) Drew-Johnson-Loewy conjecture \cite{DJL94}. This conjecture states that $\lfloor n^2/4 \rfloor$ is an upper bound on the completely positive rank of $n \times n$ matrices, which means the matrices $M_k$ are sharp for this bound.

It is therefore natural to ask whether the matrices $M_k$ also have large (quadratic in $k$) completely positive semidefinite rank. As we see below  this is not the case. We show that the complex completely positive semidefinite rank is $k$, and we show that the real completely positive semidefinite rank  is equal to $k$ if and only if a real Hadamard matrix of order $k$ exists, which suggests that determining the completely positive semidefinite rank is a difficult problem in general.

A real (complex) {Hadamard} matrix of order $k$ is a $k\times k$ matrix with  pairwise orthogonal columns and whose entries are $\pm 1$-valued (complex valued with unit modulus).  A complex Hadamard matrix exists for any order; take for example
\begin{equation}\label{eqHk}
(H_k)_{i, j} = e^{2 \pi \mathbf{i}(i-1)(j-1)/k} \quad \text{ for } \quad i,j\in [k].
\end{equation}
On the other hand, it is still an open conjecture whether a real Hadamard matrix exists for each order $k$ that is a multiple of 4.

For completeness we first give a proof that the completely positive rank is $k^2$. Here, the support of  a vector $u\in\R^d$ is the set of indices $i \in [d]$ for which $u_i\ne 0$.

\begin{proposition} \label{cpranknsquared}
The completely positive rank of $M_k$ is equal to $k^2$.
\end{proposition}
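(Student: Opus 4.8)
The plan is to prove the two inequalities $\cprank(M_k)\le k^2$ and $\cprank(M_k)\ge k^2$ separately.

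\textbf{Upper bound.} First I would exhibit an explicit Gram representation of $M_k$ by nonnegative vectors in $\R_+^{k^2}$. Index the coordinates of $\R^{k^2}$ by pairs $(s,t)\in[k]\times[k]$. Label the rows/columns of $M_k$ by $a_1,\dots,a_k$ (first block) followed by $b_1,\dots,b_k$ (second block), and define $a_i\in\R_+^{k^2}$ by $(a_i)_{(s,t)}=\tfrac1{\sqrt k}$ if $s=i$ and $0$ otherwise, and $b_j\in\R_+^{k^2}$ by $(b_j)_{(s,t)}=\tfrac1{\sqrt k}$ if $t=j$ and $0$ otherwise. Then $\langle a_i,a_i\rangle=\langle b_j,b_j\rangle=k\cdot\tfrac1k=1$; the vectors $a_1,\dots,a_k$ (resp.\ $b_1,\dots,b_k$) have pairwise disjoint supports, hence are pairwise orthogonal; and the supports of $a_i$ and $b_j$ meet in exactly the single coordinate $(i,j)$, so $\langle a_i,b_j\rangle=\tfrac1k$ for all $i,j\in[k]$. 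This is precisely a factorization of $M_k$, giving $\cprank(M_k)\le k^2$.

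\textbf{Lower bound.} Conversely, let $a_1,\dots,a_k,b_1,\dots,b_k\in\R_+^d$ be an arbitrary Gram representation of $M_k$ by nonnegative vectors. The key observation is that two nonnegative vectors have inner product $0$ if and only if their supports are disjoint (each summand $u_s v_s$ in $\langle u,v\rangle$ is nonnegative). Since $\langle a_i,a_j\rangle=0$ for $i\ne j$, the supports $S_1,\dots,S_k$ of $a_1,\dots,a_k$ are pairwise disjoint subsets of $[d]$, and likewise the supports $T_1,\dots,T_k$ of $b_1,\dots,b_k$ are pairwise disjoint. On the other hand $\langle a_i,b_j\rangle=\tfrac1k\ne0$, so $S_i\cap T_j\ne\emptyset$ for every pair $(i,j)\in[k]^2$. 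Picking an index $c_{i,j}\in S_i\cap T_j$ for each pair, if $c_{i,j}=c_{i',j'}$ then this index lies in $S_i\cap S_{i'}$ and in $T_j\cap T_{j'}$, which forces $i=i'$ and $j=j'$ by disjointness; hence the $k^2$ indices $c_{i,j}$ are distinct and $d\ge k^2$. Minimizing over all factorizations yields $\cprank(M_k)\ge k^2$, and together with the upper bound, $\cprank(M_k)=k^2$.

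\textbf{Main obstacle.} There is no serious difficulty here; both directions are short. The one fact that drives the argument is the elementary equivalence, for nonnegative vectors, between orthogonality and disjointness of supports. This turns the block-diagonal identity structure of $M_k$ into a combinatorial "grid" on the coordinate set that forces dimension at least $k^2$, and the same fact makes the matching construction for the upper bound transparent.
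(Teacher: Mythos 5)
Your proof is correct and follows essentially the same approach as the paper: an explicit factorization in $\R_+^{k^2}$ (yours in coordinate form, the paper's via $e_i\otimes\mathbf 1$ and $\mathbf 1\otimes e_i$) for the upper bound, and the disjoint-supports observation for nonnegative orthogonal vectors for the lower bound. The only cosmetic difference is in the final counting step, where you pick a distinct representative from each $S_i\cap T_j$ while the paper instead notes each support $S_i$ has size at least $k$; both give $d\ge k^2$ immediately.
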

\begin{proof}
For $i\in [k]$ consider the vectors 
 $v_i = \smash{1/\sqrt{k}} \, e_i \otimes \mathbf{1}$ and $u_i = \smash{1/\sqrt{k}}\, \mathbf{1} \otimes e_i$, where $e_i$ is the $i$th basis vector in $\R^k$ and $\mathbf{1}$ is the all-ones vector in $\R^k$. The vectors $v_1,\ldots,v_k, u_1,\ldots,u_k$ are nonnegative  and form a Gram representation of $M_k$, which shows $\cprank(M_k)\le k^2$.

Suppose $M_k = \text{Gram}(v_1, v_2, \ldots, v_k, u_1, u_2, \ldots, u_k)$ with $v_i,u_i \in \R^d_+$. In the remainder of the proof we show $d\ge k^2$. We have $(M_k)_{i,j} = \delta_{ij}$ for $1 \leq i,j \leq k$. Since the vectors $v_i$ are nonnegative, they must have disjoint supports. The same holds for the vectors $u_1,\ldots,u_k$. Since $(M_k)_{i,j} = 1/k > 0$ for $1 \leq i \leq k$ and $k+1 \leq j \leq 2k$, the support of $v_i$ overlaps with the support of $u_j$ for each $i$ and $j$. This means that for each $i \in [k]$, the size of the support of the vector $v_i$ is at least $k$. This is only possible if $d \geq k^2$. 
\end{proof}

\begin{proposition}
For each $k$ we have
$
\hcpsd(M_k) = k.
$
Moreover, we have $\scpsd(M_k)=k$ if and only if there exists a real Hadamard matrix of order $k$.
\end{proposition}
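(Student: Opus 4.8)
The plan is to squeeze both quantities between $k$ and $k$: a lower bound $\scpsd(M_k)\ge\hcpsd(M_k)\ge k$ valid for every factorization, together with explicit size-$k$ factorizations realizing it — over $\C$ using a complex Hadamard matrix (which always exists), and over $\R$ precisely when a real Hadamard matrix of order $k$ exists. Throughout, a factorization $X_1,\dots,X_k,Y_1,\dots,Y_k\in\K^{d\times d}$ of $M_k$ means $\langle X_i,X_j\rangle=\langle Y_i,Y_j\rangle=\delta_{ij}$ and $\langle X_i,Y_j\rangle=\tfrac1k$ for all $i,j\in[k]$.

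For the lower bound I would argue as follows. Since the $X_i$ are positive semidefinite, $\langle X_i,X_j\rangle=\Tr(X_iX_j)=0$ forces the column spaces of $X_i$ and $X_j$ to be orthogonal, while $\langle X_i,X_i\rangle=1$ forces $X_i\ne 0$. Hence $X_1,\dots,X_k$ have $k$ pairwise orthogonal nonzero ranges inside $\K^d$, so $d\ge k$; since $\hcpsd\le\scpsd$ this gives $\scpsd(M_k)\ge\hcpsd(M_k)\ge k$.

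For the matching upper bounds I would use the explicit factorization $X_i=e_ie_i^*$ for $i\in[k]$ and $Y_j=\tfrac1k h_jh_j^*$, where $h_1,\dots,h_k$ are the columns of a Hadamard matrix of order $k$. These matrices are positive semidefinite of size $k$, and a direct trace computation gives $\langle X_i,X_j\rangle=\delta_{ij}$, $\langle X_i,Y_j\rangle=\tfrac1k|(h_j)_i|^2=\tfrac1k$ (the entries of $h_j$ have modulus $1$), and $\langle Y_i,Y_j\rangle=\tfrac1{k^2}|\langle h_i,h_j\rangle|^2$, which equals $1$ for $i=j$ (as $\|h_i\|^2=k$) and $0$ otherwise (distinct columns of a Hadamard matrix are orthogonal). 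Taking $h_j$ from the Fourier matrix $H_k$ of \eqref{eqHk} gives $\hcpsd(M_k)\le k$, hence $\hcpsd(M_k)=k$; taking $h_j$ with $\pm1$ entries from a real Hadamard matrix of order $k$, when one exists, gives real symmetric factors and hence $\scpsd(M_k)=k$.

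It remains to prove the converse: if $\scpsd(M_k)=k$ then a real Hadamard matrix of order $k$ exists. Here I would run the range argument inside $\R^k$, where the $k$ pairwise orthogonal nonzero ranges of $X_1,\dots,X_k$ are forced to be one-dimensional; thus $X_i=x_ix_i^{\T}$ with $\|x_i\|=1$ (from $\langle X_i,X_i\rangle=\|x_i\|^4=1$) and $\{x_i\}$ an orthonormal basis of $\R^k$, and likewise $Y_j=y_jy_j^{\T}$ with $\{y_j\}$ orthonormal. Then $\langle X_i,Y_j\rangle=\langle x_i,y_j\rangle^2=\tfrac1k$, so $H_{ij}:=\sqrt k\,\langle x_i,y_j\rangle$ is $\pm1$-valued; writing $X,Y$ for the orthogonal matrices with columns $x_i,y_j$, one has $H=\sqrt k\,X^{\T}Y$ and $HH^{\T}=kX^{\T}YY^{\T}X=kI_k$, so $H$ is a real Hadamard matrix of order $k$. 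I expect this converse to be the only genuinely delicate point: it rests on the observation that a factorization of the minimal size $k$ is rigid, forcing every $X_i$ and every $Y_j$ to be a rank-one orthogonal projection (up to scaling) onto a line, and forcing the two resulting orthonormal bases to be related by $\tfrac1{\sqrt k}$ times a Hadamard matrix; everything else reduces to routine trace identities.
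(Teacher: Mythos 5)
Your proposal is correct and follows essentially the same route as the paper: lower bound from the pairwise orthogonality of the ranges of the PSD factors of the identity block, explicit rank-one factorizations from a (complex or real) Hadamard matrix for the upper bounds, and rigidity of a size-$k$ factorization forcing two orthonormal bases whose pairwise inner products squared are constant, yielding a Hadamard matrix. The only cosmetic differences are that the paper phrases the lower bound as $\hcpsd(I_k)=k$ and normalizes one basis to the standard one before reading off the Hadamard matrix, whereas you keep both bases and observe directly that $H=\sqrt{k}\,X^{\T}Y$ is orthogonal up to scaling.
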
 
\begin{proof}
The lower bound $\hcpsd(M_k) \geq k$ follows because $I_k$ is a principal submatrix of $M_k$ and $\hcpsd(I_k) = k$. To show $\hcpsd(M_k) \leq k$, we give a factorization by Hermitian positive semidefinite $k \times k$ matrices. For this consider the complex 
Hadamard matrix  $H_k$ in (\ref{eqHk}) and define the factors 
\[X_i = e_i e_i^\T \quad \text{and} \quad Y_i= \frac{u_i u_i^*}{k} \quad \text{for} \quad  i \in [k],\] where $e_i$ is the $i$th standard basis vector of $\R^k$ and $u_i$ is the $i$th column of $H_k$. By direct computation it follows that $M_k = \Gram(X_1,\ldots,X_{k}, Y_1,\ldots, Y_k)$. 

We now show that $\scpsd(M_k)=k$  if and only if there exists a real Hadamard matrix of order $k$. One direction follows directly from the above proof: If a real Hadamard matrix of size $k$ exists, then we can replace $H_k$ by this real matrix and this yields a factorization by real positive semidefinite $k \times k$ matrices. 

Now assume $\scpsd(M_k) = k$ and let $X_1, \ldots, X_k, Y_1, \ldots, Y_k \in \mathcal S_+^k$ be a Gram representation of $M$.
We first show there exist two orthonormal bases $u_1, \ldots, u_k$ and $v_1, \ldots, v_k$ of $\R^k$ such that  $X_i = u_i u_i^\T$ and $Y_i = v_i v_i^\T$. For this we observe that $I = \Gram(X_1,\ldots,X_k)$, which implies $X_i \neq 0$ and $X_i X_j = 0$ for all $i \neq j$. Hence, for all $i \neq j$, the range of $X_j$ is contained in the kernel of $X_i$. Therefore the range of $X_i$ is orthogonal to the range of $X_j$. We now have $\smash{\sum_{i=1}^k} \mathrm{dim}(\mathrm{range}(X_i)) \leq k$ and $\mathrm{dim}(\mathrm{range}(X_i)) \geq 1$ for all $i$. From this it follows that $\rank(X_i) = 1$ for all $i \in [k]$. This means there exist $u_1, \ldots, u_k \in \R^k$ such that $X_i = u_i u_i^\T$ for all $i$. From $I = \Gram(X_1,\ldots,X_k)$ it follows that the vectors $u_1,\ldots,u_k$ form an orthonormal basis of $\R^k$. The same argument can be made for the matrices $Y_i$, thus $Y_i=v_iv_i^\T$ and the vectors $v_1,\ldots,v_k$ form an orthonormal basis of $\R^k$.
Up to an orthogonal transformation we may assume that the first basis is the standard basis; that is, $u_i = e_i$ for $i \in [k]$. We then obtain
\[
\frac{1}{k} = (M_k)_{i,j+k} = \ip{e_i,v_j}^2 = \big((v_j)_i\big)^2 \quad \text{for} \quad i,j \in [k],
\]
hence $(v_j)_i = \pm 1/\sqrt{k}$. Therefore, the $k \times k$ matrix whose $k$th column is $\sqrt{k}\, v_k$ is a real Hadamard matrix.
\end{proof}

The above proposition leaves open the exact determination of $\scpsd(M_k)$ for the cases where a real Hadamard matrix of order $k$ does not exist. Extensive experimentation using the heuristic from Section~\ref{sec:heuristic} suggests that for $k = 3, 5, 6, 7$ the real completely positive semidefinite rank of $M_k$ equals $2k$, which leads to the following question:

\begin{question}
Is the real completely positive semidefinite rank of $M_k$ equal to $2k$ if a real Hadamard matrix of size $k \times k$ does not exist?
\end{question}

We also used the heuristic from Section~\ref{sec:heuristic} to check numerically that the aforementioned matrices from \cite{Bomze}, which have completely positive rank greater than $\lfloor n^2/4 \rfloor$, have small (smaller than $n$)  real completely positive semidefinite rank. In fact, in our numerical experiments we never found a completely positive $n \times n$ matrix for which we could not find a factorization in dimension $n$, which leads to the following question:

\begin{question}
Is the real (or complex) completely positive semidefinite rank of a completely positive $n \times n$ matrix upper bounded by $n$?
\end{question}

\subsection{A heuristic for finding Gram representations}
\label{sec:heuristic}

In this section we give an adaptation to the symmetric setting of the seesaw method from \cite{WernerWolf2001}, which is used to find good quantum strategies for nonlocal games. Given a matrix $M \in \CSP^n$ with $\scpsd(M) \leq d$, we give a heuristic to find a Gram representation of $M$ by positive semidefinite $d \times d$ matrices. Although this heuristic is not guaranteed to converge to a factorization of $M$, for small $n$ and $d$ (say, $n,d \leq 10$) it works well in practice by restarting the algorithm several times. The following algorithm seeks to minimize the function
\[
E(X_1,\ldots,X_n) =\underset{i,j \in [n]}{\max} |\langle X_i, X_j \rangle - M_{i,j}|.
\]

\begin{algorithm}
Initialize the algorithm by setting $k=1$ and generating random matrices $X_1^{0},\ldots,X_n^{0} \in \mathcal S_+^d$ that satisfy $\langle X_i^{0}, X_i^{0} \rangle = M_{i,i}$ for all $i \in [n]$. Iterate the following steps:
\begin{enumerate}
\item Let $(\delta,Y_1,\ldots,Y_n)$ be a (near) optimal solution of the semidefinite program
\[
\min \Big\{ \delta : \delta \in \R_+, \, Y_1,\ldots,Y_n \in \mathcal S_+^d, \, \Big| \big\langle X_i^{k-1}, Y_j \big\rangle - M_{i,j} \Big| \leq \delta \text{ for } i,j \in [n] \Big\}.
\]
\item Perform a line search to find the scalar $r \in [0, 1]$ minimizing
\[
E\big( (1-r) X_1^{k-1} + r Y_1, \ldots,  (1-r) X_n^{k-1} + r Y_n\big),
\]
and set $X_i^k = (1-r) X_i^{k-1} + r Y_i$ for each $i \in [n]$.
\item If $E(X_1^k,\ldots,X_n^k)$ is not small enough, increase $k$ by one and go to step (1). Otherwise, return the matrices $X_1^k$, $\ldots$, $X_n^k$.
\end{enumerate}
\end{algorithm}

\section{The set of bipartite correlations}\label{secCmn}

In this section we define the set $\Cor(m,n)$ of bipartite correlations and we discuss properties of the extreme points of $\Cor(m,n)$, which will play a crucial role in the construction of $\CSP$-matrices with large complex completely positive semidefinite rank. 
In particular we give a characterization of the extreme points of $\Cor(m,n)$ in terms of extreme points of the related set $\ME_{m+n}$ of correlation matrices. We use it to give a simple construction of a class of extreme points of $\Cor(m,n)$ with rank $r$, when $m=n={r+1\choose 2}$.
We also revisit conditions for extreme points introduced by Tsirelson \cite{Tsirelson:87} and point out links with universal rigidity. Based on these we can construct  extreme points of $\Cor(m,n)$ with rank $r$ when $m=r$ and $n={r\choose 2}+1$, which are used to prove our main result (Theorem~\ref{theomain}).

\subsection{Bipartite correlations  and correlation matrices}
A matrix $C \in \R^{m \times n}$ is called a {\em bipartite correlation matrix} if there exist real unit vectors  $x_1,\ldots,x_m,$ $y_1,\ldots,y_n\in \R^d$ (for some $d\ge 1$) such that 
$C_{s,t} = \langle x_s, y_t \rangle$ for all $s \in [m]$ and $t \in [n]$. Following Tsirelson \cite{Tsirelson:87}, any such system of real unit vectors 
is called a {\em $C$-system}. We let $\Cor(m, n)$ denote the set of all $m\times n$ bipartite correlation matrices. 

The \emph{elliptope} $\ME_n$  is defined as
\[
\mathcal{E}_n = \Big\{ E \in \S^n: E_{ii} = 1 \text{ for } i =1, \ldots, n\Big\},
\]
its elements are the {\em correlation matrices}, which can alternatively be defined as all matrices of the form $(\langle z_i,z_j\rangle)_{i,j=1}^n$ for some real unit vectors $z_1,\ldots,z_n\in \R^d$ ($d\ge 1$).
We have the surjective projection 
\begin{equation}\label{eqpi}
\pi \colon \mathcal E_{m+n} \to \Cor(m,n), \, \begin{pmatrix} Q & C \\ C^\T & R \end{pmatrix} \mapsto C. 
\end{equation}
Hence, $\Cor(m,n)$ is a projection of the elliptope $\ME_{m+n}$ and therefore a convex set. Given   $C \in \Cor(m,n)$, any matrix $E \in \mathcal E_{m+n}$ such that $\pi(E)=C$ is called an \emph{extension} of $C$ to the elliptope and we let $\fib(C)$ denote the fiber (the set of extensions) of $C$. 

Theorem~\ref{theoExCor} below characterizes  extreme points of $\Cor(m,n)$ in terms of extreme points of  $\mathcal{E}_{m+n}$. 
It is based on two intermediary results.  The first result (whose proof is easy) relates  extreme points $C\in\Cor(m,n)$ to properties of their set  of extensions $\fib(C)$. It is shown in \cite{ELV14} in a more general setting.

\begin{lemma}[{\cite[Lemma 2.4]{ELV14}}] \label{lemELV}
Let $C\in \Cor(m,n)$. Then  $C$ is an extreme point of $\Cor(m,n)$ if and only if the set $\fib(C)$ is a face of $\mathcal E_{m+n}$. Moreover, if $C$ is an extreme point of $\Cor(m,n)$, then every extreme point of $\fib(C)$ is an extreme point of $\mathcal E_{m+n}$.
\end{lemma}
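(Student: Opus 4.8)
The plan is to derive the lemma from a purely formal fact about linear images of convex sets, applied to the projection $\pi$ of~\eqref{eqpi}: if $L=\pi(K)$ is the image of a convex set $K$ under a linear map $\pi$ and $c\in L$, then the fiber $\pi^{-1}(c)\cap K$ is a (nonempty) face of $K$ if and only if $c$ is an extreme point of $L$, and whenever this holds the extreme points of that fiber are extreme points of $K$. Here one takes $K=\mathcal E_{m+n}$, $L=\Cor(m,n)$ and $c=C$; since $\pi$ is linear and surjective, $\fib(C)=\pi^{-1}(C)$ is a nonempty convex subset of $\mathcal E_{m+n}$ (the intersection of $\mathcal E_{m+n}$ with an affine subspace), so the general statements translate directly into the three assertions of the lemma.

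For the implication ``$\fib(C)$ is a face $\Rightarrow$ $C$ extreme'', I would take a representation $C=\lambda C_1+(1-\lambda)C_2$ with $C_1,C_2\in\Cor(m,n)$ and $\lambda\in(0,1)$, choose extensions $E_1\in\fib(C_1)$ and $E_2\in\fib(C_2)$, and observe that $E:=\lambda E_1+(1-\lambda)E_2\in\mathcal E_{m+n}$ satisfies $\pi(E)=\lambda C_1+(1-\lambda)C_2=C$, hence $E\in\fib(C)$. Since $\fib(C)$ is a face of $\mathcal E_{m+n}$ containing the point $E$ in the relative interior of the segment $[E_1,E_2]\subseteq\mathcal E_{m+n}$, both $E_1,E_2$ lie in $\fib(C)$, so $C_1=\pi(E_1)=C=\pi(E_2)=C_2$ and $C$ is extreme.

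For the converse, assuming $C$ is an extreme point of $\Cor(m,n)$, I would verify the defining property of a face directly: $\fib(C)$ is convex, and if $E\in\fib(C)$ is written as $E=\lambda E_1+(1-\lambda)E_2$ with $E_1,E_2\in\mathcal E_{m+n}$ and $\lambda\in(0,1)$, then applying $\pi$ gives $C=\lambda\,\pi(E_1)+(1-\lambda)\,\pi(E_2)$ with $\pi(E_1),\pi(E_2)\in\Cor(m,n)$; extremality of $C$ forces $\pi(E_1)=\pi(E_2)=C$, i.e.\ $E_1,E_2\in\fib(C)$. Finally, for the ``moreover'' statement I would invoke the standard fact that the extreme points of a face of a convex set are extreme points of the whole set, which in one line is: if $E$ is extreme in $\fib(C)$ and $E=\lambda E_1+(1-\lambda)E_2$ with $E_1,E_2\in\mathcal E_{m+n}$ and $\lambda\in(0,1)$, the face property just established gives $E_1,E_2\in\fib(C)$, and extremality of $E$ inside $\fib(C)$ then yields $E_1=E_2=E$.

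I do not expect any genuine obstacle here: the argument is elementary and uses nothing about $\mathcal E_{m+n}$ beyond convexity, nor anything about $\pi$ beyond linearity and surjectivity. The only points that need a word of care are that $\fib(C)$ is nonempty (so that ``face'' is meaningful), which is exactly the surjectivity of $\pi$ noted after~\eqref{eqpi}, and that $\pi$ commutes with convex combinations, which is immediate from its definition.
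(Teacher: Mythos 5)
Your proof is correct. The paper does not actually supply a proof of this lemma---it defers to \cite[Lemma~2.4]{ELV14} with the remark that the proof is easy and is given there in a more general setting---so there is no in-paper argument to compare against. What you give is precisely the expected elementary argument: $\pi$ is linear and surjective, $\fib(C)=\pi^{-1}(C)\cap\mathcal E_{m+n}$ is nonempty and convex, and the three assertions follow from the definitions of face and extreme point by pushing convex combinations through $\pi$ (for the ``face $\Rightarrow$ extreme'' direction) and lifting them via $\pi^{-1}$ (for the converse), with the ``moreover'' part being the standard observation that extreme points of a face are extreme in the ambient set. This matches the general statement about linear images of convex sets that ELV14 formalizes, so your proposal is both correct and faithful in spirit to the cited source.
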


The second result  (from  Tsirelson \cite{Tsirelson:87}) shows that  every extreme point $C$ of $\Cor(m,n)$ has a unique extension $E$ in $\mathcal E_{m+n}$, we give a proof for completeness.

\begin{lemma}[\cite{Tsirelson:87}] \label{lemExVec}
Assume $C$ is an extreme point of $\Cor(m,n)$. 
\begin{itemize}
\item[(i)] If $x_1,\ldots,x_m,y_1,\ldots,y_n$ is a $C$-system, then 
\[
\Span\{x_1,\ldots,x_m\}=\Span\{y_1,\ldots,y_n\}.
\]
\item[(ii)] The matrix $C$ has a unique extension to a matrix $E \in \mathcal E_{m+n}$, and there exists a $C$-system $x_1,\ldots,x_m,y_1,\ldots,y_n \in \R^r$, with $r = \rank(C)$, such that
\[
E = \Gram(x_1,\ldots,x_m,y_1,\ldots,y_n).
\]
\end{itemize}
\end{lemma}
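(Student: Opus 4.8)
The plan is to prove (i) first, and then deduce (ii) from it together with Lemma~\ref{lemELV}. For part (i), suppose $x_1,\ldots,x_m,y_1,\ldots,y_n$ is a $C$-system and, for contradiction, that $\Span\{x_1,\ldots,x_m\}\subsetneq \Span\{y_1,\ldots,y_n\}$ (the other inclusion being symmetric). Pick a unit vector $w$ in $\Span\{y_1,\ldots,y_n\}$ orthogonal to every $x_s$. Then for any small $\varepsilon$, replacing each $y_t$ by a suitable rotation keeping $\langle x_s,y_t\rangle$ fixed but moving the component of $y_t$ along $w$—concretely, write $y_t = y_t' + \lambda_t w$ with $y_t'\perp w$, and set $y_t^{\pm} = y_t' \mypm \lambda_t w$ (still unit vectors, still with $\langle x_s,y_t^{\pm}\rangle = \langle x_s,y_t\rangle$ since $x_s\perp w$). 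Both $(x_s,y_t^{+})$ and $(x_s,y_t^{-})$ are $C$-systems realizing the same matrix $C$, so this does not immediately give a decomposition of $C$. The right move instead is to use the extension: $E=\Gram(x_1,\ldots,x_m,y_1,\ldots,y_n)$ lies in $\fib(C)$, and perturbing the Gram vectors as above produces two distinct extensions $E^{+}\neq E^{-}$ with $\tfrac12(E^++E^-)=E$ unless all $\lambda_t=0$; by Lemma~\ref{lemELV}, $\fib(C)$ is a face of $\mathcal E_{m+n}$, and a face containing the midpoint of a segment contains the segment, so $E^{\pm}\in\fib(C)$ is consistent—hence one must argue at the level of $C$ itself that the spans differing forces a genuine splitting of $C$. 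A cleaner route: if the spans are unequal, one can also perturb the $x_s$ by adding multiples of $w$ (which changes $\langle x_s,y_t\rangle$), producing $C^{\pm}\in\Cor(m,n)$ with $C=\tfrac12(C^++C^-)$ and $C^+\neq C^-$, contradicting extremality. I expect the bookkeeping here—choosing the perturbation so that unit norms are preserved and the two perturbed systems are genuinely different—to be the main technical point, though it is routine.

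For part (ii), start from any $C$-system $x_1,\ldots,x_m,y_1,\ldots,y_n$ and let $E=\Gram$ of this system be the associated extension in $\fib(C)$. By part (i), $V:=\Span\{x_1,\ldots,x_m\}=\Span\{y_1,\ldots,y_n\}$, so after restricting to $V$ we may assume all vectors live in $\R^{\dim V}$, and the Gram matrix $E$ of these vectors has rank exactly $\dim V$. On the other hand $\rank(E)\ge \rank(C)=r$ since $C$ is a submatrix of $E$. To see $\rank(E)=r$ and $\dim V = r$: the rows of $E$ indexed by $\{1,\ldots,m\}$ are $(\langle x_s, x_{s'}\rangle)_{s'} \oplus (\langle x_s, y_t\rangle)_t$, and since each $x_s\in V = \Span\{y_t\}$, the $x$-block of $E$ is linearly determined by the relation $x_s=\sum_t c_{st} y_t$; a short linear-algebra argument then shows every row of $E$ lies in the span of the last $n$ rows, and symmetrically in the span of the first $m$ rows, giving $\rank(E)\le \min(m,n)$ bounds that can be sharpened to $\rank(E)=\rank(C)=r$ by tracking which rows are independent. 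Thus $\dim V=r$, so the $C$-system can be taken in $\R^r$, and $E=\Gram(x_1,\ldots,x_m,y_1,\ldots,y_n)$ is a positive semidefinite extension of $C$ of rank $r$.

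Finally, uniqueness of the extension: any $E'\in\fib(C)$ has $\rank(E')\ge r$, and by Lemma~\ref{lemELV} the extreme points of the face $\fib(C)$ are extreme points of $\mathcal E_{m+n}$. A standard fact about the elliptope is that an extreme point of $\mathcal E_N$ with off-diagonal block of rank $r$ has rank $r$ (more precisely, $\rank$ of an extreme point $E$ of $\mathcal E_N$ satisfies $\binom{\rank E+1}{2}\le N$, but here we use the sharper statement that $\fib(C)$ being a face forces its elements to agree with $E$ on the diagonal blocks). Concretely, if $E'=\Gram(x_1',\ldots,x_m',y_1',\ldots,y_n')$ is any extension, part (i) applied to this $C$-system gives $\Span\{x_s'\}=\Span\{y_t'\}$ of dimension $\ge r$; writing $x_s' = \sum_t c_{st}' y_t'$ and matching against $C_{st}=\langle x_s',y_t'\rangle = \sum_{t'} c_{st'}'\langle y_{t'}',y_t'\rangle$, one finds that the Gram matrix $R'=(\langle y_t',y_{t'}'\rangle)$ is constrained by $C$ and the extension relation to a point that, combined with $R'$ being a correlation matrix lying in a face of $\mathcal E_n$ determined by $C$, is forced to be unique; symmetrically for $Q'$. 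The cleanest packaging is: $\fib(C)$ is an affine slice of $\mathcal E_{m+n}$ that is simultaneously a face, and a face of the elliptope that is affinely a single fiber of $\pi$ over an extreme point must be a singleton because otherwise it would contain a segment projecting to the single point $C$, contradicting that $C$ is an extreme point once one lifts the segment endpoints back through the (affine, hence extreme-point-reflecting) structure. I expect this uniqueness step to be where the most care is needed, since it is exactly the place the extremality of $C$ has to be converted into rigidity of the lift; everything else is linear algebra on Gram matrices.
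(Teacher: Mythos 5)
Your proposal contains two genuine gaps, one in each part.

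\medskip
\textbf{Part (i).} Your ``cleaner route'' chooses a unit vector $w\in\Span\{y_1,\ldots,y_n\}$ orthogonal to every $x_s$ and proposes to set $x_s^{\pm}=x_s\pm\epsilon w$. But then $\|x_s^{\pm}\|^2=1+\epsilon^2>1$, so the perturbed vectors are not unit vectors and do not even have norm at most one; the extension-to-unit-vectors observation does not apply, and there is no small-$\epsilon$ fix because the inequality is strict for every $\epsilon\neq 0$. (Rescaling to $\sqrt{1-\epsilon^2}\,x_s\pm\epsilon w$ restores unit norms but destroys the identity $C=\tfrac12(C^++C^-)$.) You flag this as ``routine bookkeeping,'' but it is the heart of the matter. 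The paper's proof resolves it differently: pick a specific $y_k\notin V:=\Span\{x_s\}$ and let $w$ be the \emph{orthogonal projection of $y_k$ onto $V$}, so $\|w\|<1$ strictly; this slack lets one choose a nonzero $u\in V$ with $\|w\pm u\|\le 1$, and replacing only $y_k$ by $w\pm u$ yields $C^{\pm}\in\Cor(m,n)$ (using the norm-$\le 1$ observation) with $C=\tfrac12(C^++C^-)$, after which extremality forces $u\perp V$ and hence $u=0$. Your first, discarded attempt with $y_t^{\pm}=y_t'\pm\lambda_t w$ is also not quite as you describe: the resulting Gram matrices satisfy $E^+=E^-$, so they are not ``two distinct extensions.''

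\medskip
\textbf{Part (ii).} The rank computation is essentially correct (independent $x_s$'s yield independent rows of $C$ once one uses (i), so $\dim V\le\rank C$, and conversely $\rank E\ge\rank C$), though you leave the ``sharpening'' from $\min(m,n)$ down to $r$ only sketched. The uniqueness argument, however, does not work. That $\fib(C)$ is a face of $\mathcal E_{m+n}$ (Lemma~\ref{lemELV}) does \emph{not} force it to be a singleton: faces of the elliptope can be positive-dimensional, and a face lying entirely over the single point $C$ is perfectly consistent with $C$ being extreme (every point of the face projects to $C$, so there is no segment downstairs to contradict extremality). Likewise, the relation $C=c'R'$ you write down does not pin down $R'$, because the coefficient matrix $c'$ depends on the particular $C$-system realizing $E'$. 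The missing idea, which the paper supplies, is the direct-sum trick: given two $C$-systems $\{x_s',y_t'\}$ and $\{x_s'',y_t''\}$, form the combined $C$-system $x_s=(x_s'\oplus x_s'')/\sqrt2$, $y_t=(y_t'\oplus y_t'')/\sqrt2$ and apply (i) to it, producing \emph{a single set of coefficients} $\lambda^s_t$ with $x_s=\sum_t\lambda^s_t y_t$, which then simultaneously gives $x_s'=\sum_t\lambda^s_t y_t'$ and $x_s''=\sum_t\lambda^s_t y_t''$. Pairing each identity with $C_{s,t}=\langle y_t',x_s'\rangle=\langle y_t'',x_s''\rangle$ yields $\langle x_r',x_s'\rangle=\langle x_r'',x_s''\rangle$ (and likewise for the $y$'s), which is what makes the extension unique. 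Without this common-coefficient step, the argument does not close.
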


\begin{proof}
We will use the following observation: Each matrix $C=(\langle a_s,b_t \rangle)_{s\in [m],t\in [n]}$, where $a_s,b_t$ are vectors with $\|a_s\|,\|b_t\|\le 1$, belongs to $\Cor(m,n)$ since it satisfies 
\[
C_{s,t} = \left\langle \begin{pmatrix}a_s\\ \sqrt{1-\|a_s\|^2}\\ 0 \end{pmatrix}, \begin{pmatrix} b_t\\ 0\\ \sqrt {1-\|b_t\|^2}\end{pmatrix} \right\rangle\ \ \text{ for all }  (s,t)\in [m]\times [n].
\]
 
(i) Set $V=\Span\{x_1,\ldots,x_m\}$ and assume  $y_k \not\in V$ for some $k\in [n]$. Let
$w$ denote the orthogonal projection of $y_k$ onto $V$. Then $\|w\|<1$ and one can choose a nonzero vector $u \in V$ such that $\|w \mypm u\| \le 1$. Define the matrices $C^{\mypm} \in \R^{m \times n}$ by
\[
C_{s,t}^{\mypm} = 
\begin{cases}
\langle x_s, w \mypm u \rangle & \text{if } t = k,\\
\langle x_s, y_t \rangle & \text{if } t \neq k.
\end{cases}
\]
Then, $C^{\mypm} \in \Cor(m,n)$ (by the above observation) and $C=(C^++C^-)/2$. As $C$ is an extreme point of $\Cor(m,n)$ one must have $C=C^+=C^-$. Hence $u$ is orthogonal to each $x_s$ and thus $u=0$, a contradiction. This shows the inclusion 
$\Span\{y_1,\ldots,y_m\} \subseteq \Span\{x_1,\ldots,x_m\}$  and the reverse  one follows in the same way.
 
(ii) Assume $\{x_s',y_t'\}$ and $\{x_s'',y_t''\}$ are two $C$-systems. We show $\langle x'_r,x_s'\rangle = \langle x_r'',x_s''\rangle$ for all $r,s\in S$ and 
 $\langle y_t',y_u'\rangle=\langle y_t'',y_u''\rangle$ for all $t,u\in T$.
For this define the vectors
 \[
 x_s = \frac{x'_s \oplus x_s''}{\sqrt{2}} \quad \text{and} \quad y_t= \frac{y_t'\oplus y_t''}{\sqrt{2}},
 \]
  which again form a $C$-system. Using (i), for any $s\in S$, there exist scalars $\lambda_t^{s}$ such that $x_s=\sum_{t\in T} \lambda^{s}_t y_t$ and thus 
 $x_s'=\sum_{t\in T} \lambda ^{s}_t y_t'$ and $x_s''=\sum_{t \in T} \lambda ^{s}_t y_t''$. This shows
\[
\langle x_r',x_s'\rangle= \sum_{t \in T} \lambda^{r}_t \langle y_t', x_s'\rangle = \sum_{t \in T}\lambda^{r}_t C_{s,t}=
 \sum_{t \in T}\lambda^{r}_t\langle y_t'',x_s''\rangle = \langle x_r'',x_s''\rangle
 \]
for all $r,s \in S$. The analogous argument shows $\langle y_t',y_u'\rangle=\langle y_t'',y_u''\rangle$ for all $t,u \in T$. This shows $C$ has a unique extension to a matrix $E \in \ME_{m+n}$.

Finally,  we show that   $\rank (E)=\rank (C)$. 
Say $E$ is the Gram matrix of $x_1,\ldots,x_m,y_1,\ldots,y_n$.
In view of (i), $\rank(E)=\rank\{x_1,\ldots,x_m\}$ and thus  it suffices to show that 
$\rank\{x_1,\ldots,x_m\}\le \rank (C)$.
For this note that if $\{x_s: s\in I\}$ (for some $I\subseteq S$) is linearly independent then the corresponding rows of $C$ are linearly independent, since 
$\sum_{s\in I}\lambda_s \langle x_s,y_t\rangle =0$ (for all $t\in T$) implies  $\sum_{s\in I}\lambda_s  x_s=0$ (using (i)) and thus  $\lambda_s=0$ for all $s$.  
\end{proof}

\begin{theorem} \label{theoExCor}
A matrix $C$ is an extreme point of $\Cor(m,n)$ if and only if  $C$ has a unique extension to a matrix $E\in \ME_{m+n}$ and $E$ is an extreme point of $\ME_{m+n}$.
\end{theorem}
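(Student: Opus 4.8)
The plan is to obtain Theorem~\ref{theoExCor} as a short consequence of Lemmas~\ref{lemELV} and~\ref{lemExVec}, together with the elementary fact that a singleton $\{E\}$ is a face of a convex set $K$ if and only if $E$ is an extreme point of $K$ (both assertions just say that $E=\tfrac12(x+y)$ with $x,y\in K$ forces $x=y=E$).

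For the forward implication, suppose $C$ is an extreme point of $\Cor(m,n)$. By Lemma~\ref{lemExVec}(ii) the fiber $\fib(C)$ consists of a single matrix $E$. By Lemma~\ref{lemELV}, $\fib(C)=\{E\}$ is a face of $\ME_{m+n}$, and hence (by the elementary fact above) $E$ is an extreme point of $\ME_{m+n}$. Alternatively, one can invoke the ``moreover'' part of Lemma~\ref{lemELV}: the unique point $E$ of $\fib(C)$ is trivially an extreme point of $\fib(C)$, hence an extreme point of $\ME_{m+n}$.

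For the converse, suppose $C$ has a unique extension $E$ and that $E$ is an extreme point of $\ME_{m+n}$; thus $\fib(C)=\{E\}$. Write $C=\tfrac12(C_1+C_2)$ with $C_1,C_2\in\Cor(m,n)$. Since $\pi$ is surjective, choose $E_1,E_2\in\ME_{m+n}$ with $\pi(E_i)=C_i$; as $\pi$ is linear (a coordinate projection) and $\ME_{m+n}$ is convex, $\tfrac12(E_1+E_2)\in\ME_{m+n}$ and $\pi\big(\tfrac12(E_1+E_2)\big)=C$, so $\tfrac12(E_1+E_2)\in\fib(C)=\{E\}$. Extremality of $E$ in $\ME_{m+n}$ then forces $E_1=E_2=E$, whence $C_1=\pi(E_1)=\pi(E)=C=C_2$, so $C$ is an extreme point of $\Cor(m,n)$. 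Equivalently, one may note that $\fib(C)=\{E\}$ is a face of $\ME_{m+n}$ (again by the singleton--face fact) and apply the ``if'' direction of Lemma~\ref{lemELV}.

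There is essentially no real obstacle here once the two lemmas are in hand; the only points needing (mild) care are recording the singleton--face equivalence and observing that the fiber is nonempty and stable under convex combinations, which follow from $\pi$ being a surjective linear projection onto a coordinate block and $\ME_{m+n}$ being convex. Note also that the parameter $r=\rank(C)$ appearing in Lemma~\ref{lemExVec} plays no role in the present statement; it is recorded there for later use.
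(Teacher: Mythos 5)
Your proof is correct and follows the same route as the paper, which simply states that the theorem is a ``direct application of Lemma~\ref{lemELV} and Lemma~\ref{lemExVec}(ii)''; you have merely made explicit the singleton--face equivalence and the convexity/linearity of the projection $\pi$ that the paper leaves implicit.
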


\begin{proof}
Direct application of Lemma \ref{lemELV} and Lemma \ref{lemExVec} (ii).
\end{proof}

We can use the following lemma to construct explicit examples of extreme points of $\Cor(m,n)$ for the case $m=n$.

\begin{lemma} \label{lemElliptopeCor}
Each extreme point of $\mathcal{E}_n$ is an extreme point of $\Cor(n,n)$.
\end{lemma}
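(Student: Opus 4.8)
The plan is to deduce this from the characterization of extreme points in Theorem~\ref{theoExCor}. Let $E$ be an extreme point of $\mathcal E_n$. First I would observe that $\mathcal E_n\subseteq\Cor(n,n)$: writing $E=\Gram(z_1,\dots,z_n)$ for real unit vectors $z_i$, the system $x_i=y_i=z_i$ is a $C$-system for $E$, so $E\in\Cor(n,n)$. By Theorem~\ref{theoExCor} it then suffices to show (i) that $E$ has a unique extension to a matrix in $\mathcal E_{2n}$, and (ii) that this extension is an extreme point of $\mathcal E_{2n}$.

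For (i), note that since $E$ is symmetric every extension of $E$ has the block form $\widetilde E=\begin{pmatrix}Q & E\\ E & R\end{pmatrix}$ with $Q,R\in\mathcal E_n$. I would take a Gram representation $\widetilde E=\Gram(a_1,\dots,a_n,b_1,\dots,b_n)$ by real unit vectors; the off-diagonal block then gives $\langle a_s,b_s\rangle=E_{ss}=1$, and the equality case of Cauchy--Schwarz forces $a_s=b_s$ for every $s$. Hence $Q=R=E$, so the unique extension of $E$ is $M:=\begin{pmatrix}E & E\\ E & E\end{pmatrix}$.

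For (ii), the key point is that $(u,-u)^{\T}\in\ker M$ for every $u\in\R^n$. Suppose $M=\tfrac12(F_1+F_2)$ with $F_1,F_2\in\mathcal E_{2n}$. Then $0\preceq F_i\preceq 2M$, so $\ker M\subseteq\ker F_i$ for $i=1,2$; writing $F_i$ in $n\times n$ blocks as $\begin{pmatrix}A_i & B_i\\ B_i^{\T} & C_i\end{pmatrix}$ and applying it to $(u,-u)^{\T}$ for arbitrary $u$ gives $A_i=B_i=C_i$, and $A_i\in\mathcal E_n$ since it is a principal submatrix of $F_i$ with unit diagonal. Now $\tfrac12(A_1+A_2)=E$, and since $E$ is an extreme point of $\mathcal E_n$ this forces $A_1=A_2=E$, hence $F_1=F_2=M$. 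Thus $M$ is an extreme point of $\mathcal E_{2n}$, and Theorem~\ref{theoExCor} then gives that $E$ is an extreme point of $\Cor(n,n)$.

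The individual steps are short block-matrix computations; the only places that need a small idea are noticing in (i) that the prescribed off-diagonal block $E$ carries $1$'s on its diagonal, which pins down the Gram vectors and hence the extension, and spotting in (ii) the null vectors $(u,-u)^{\T}$ of $M$. I do not expect a genuine obstacle.
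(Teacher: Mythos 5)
Your proof is correct and takes essentially the same approach as the paper: both reduce to Theorem~\ref{theoExCor}, identify the unique extension as $\begin{pmatrix}E & E\\ E & E\end{pmatrix}$, and prove its extremality via the null vectors $(u,-u)^{\T}$. Your Cauchy--Schwarz argument for uniqueness is just a rephrasing of the paper's observation that $e_i\oplus(-e_i)$ lies in the kernel of any extension.
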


\begin{proof}
Let $C$ be an extreme point of $\mathcal{E}_n$. Define the matrix 
\[
E = \begin{pmatrix} C & C \\ C & C \end{pmatrix}.
\]
Then $E \in \mathcal{E}_{2n}$ is an extension of $C$. In view of Theorem \ref{theoExCor} it suffices to show that $E$ is the  unique extension of $C$ and that $E$ is an extreme point of $\mathcal{E}_{2n}$. With  $e_1,\ldots,e_n$ denoting the standard unit vectors in $\R^n$, observe that the vectors $e_i\oplus -e_i$ ($i\in [n]$) lie in the kernel of any matrix $E'\in \fib(C)$. Indeed, since $E'$ and $C$ have an all-ones diagonal we have
\[
(e_i \oplus - e_i)^\T E' (e_i \oplus -e_i) = 0,
\]
and since $E'$ is positive semidefinite this implies that $e_i \oplus -e_i \in \ker(E')$. This implies that $\fib(C)=\{E\}$. 
We now show that $E$ is an extreme point of $\mathcal{E}_{2n}$. For this let  $E_1, E_2 \in \mathcal{E}_{2n}$ and $0 < \lambda < 1$ such that $E = \lambda E_1 + (1-\lambda) E_2$. 
As $E_1,E_2$ are positive semidefinite, the kernel of $E$ is the intersection of the kernels of $E_1$ and $E_2$.
Hence the vectors $e_i\oplus -e_i$ belong to the kernels of $E_1$ and $E_2$ and thus
\[
E_1 = \begin{pmatrix} C_1 & C_1 \\ C_1 & C_1 \end{pmatrix} \quad \text{and} \quad E_2 = \begin{pmatrix} C_2 & C_2 \\ C_2 & C_2 \end{pmatrix}
\]
for some $C_1,C_2\in \ME_n$. Hence, $C=\lambda C_1+(1-\lambda) C_2$, which implies $C=C_1=C_2$, since $C$ is an extreme point of $\ME_n$. Thus $E=E_1=E_2$, which completes the proof.
\end{proof}

The above lemma shows how to construct extreme points of $\Cor(n,n)$ from extreme points of the elliptope $\mathcal E_n$. Li and Tam \cite{LiTam} give the following characterization of the extreme points of $\ME_n$.

\begin{theorem}[\cite{LiTam}] \label{theoLiTam}
Consider a matrix $E\in \ME_n$ with rank $r$ and unit vectors $z_1,\ldots,z_n\in \R^r$ such that $E=\Gram(z_1,\ldots,z_n)$. Then $E$ is an extreme point of $\ME_n$  if and only if
\begin{equation}\label{eqrkE}
\binom{r+1}{2} = \dim(\Span\{z_1z_1^\T ,\ldots,z_nz_n^\T \}).
\end{equation}
In particular, if $E$ is an extreme point of $\ME_n$, then ${r+1\choose 2}\le n$.
\end{theorem}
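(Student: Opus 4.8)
The plan is to reduce extremality of $E$ to the nonexistence of a suitable infinitesimal perturbation, and then to count dimensions. Let $Z$ denote the $r \times n$ matrix with columns $z_1,\dots,z_n$, so that $E = Z^\T Z$, $\rank(Z) = r$, and $\ker E = \ker Z$, a subspace of $\R^n$ of dimension $n-r$.

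The first and main step is to show that $E$ \emph{fails} to be an extreme point of $\ME_n$ if and only if there is a nonzero symmetric matrix $S \in \mathcal S^r$ with $\langle S, z_i z_i^\T \rangle = 0$ for all $i \in [n]$. For one direction, given such an $S$, put $\Delta = Z^\T S Z$. Then $\Delta \ne 0$ because $Z$ has rank $r$, the diagonal entries $\Delta_{ii} = z_i^\T S z_i = \langle S, z_i z_i^\T \rangle$ vanish, and $E \pm t\Delta = Z^\T (I_r \pm tS) Z \succeq 0$ once $|t|$ is small enough that $I_r \pm tS \succeq 0$; hence $E \pm t\Delta \in \ME_n$ and $E = \tfrac12\big((E+t\Delta) + (E-t\Delta)\big)$ is a nontrivial convex decomposition, so $E$ is not extreme. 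Conversely, suppose $E = \tfrac12(E_1 + E_2)$ with $E_1 \ne E_2$ in $\ME_n$, and set $\Delta = E_1 - E \ne 0$; this is a symmetric matrix with zero diagonal. Since $E_1, E_2 \succeq 0$ and $E = \tfrac12(E_1+E_2)$, the kernel of $E$ equals $\ker E_1 \cap \ker E_2$ (as in the proof of Lemma~\ref{lemElliptopeCor}), so $\ker E \subseteq \ker \Delta$. As $\Delta$ is symmetric, its range lies in $(\ker \Delta)^\perp \subseteq (\ker Z)^\perp = \mathrm{range}(Z^\T)$, and combined with $\ker Z \subseteq \ker\Delta$ this lets us write $\Delta = Z^\T S Z$ for a symmetric $S \in \mathcal S^r$ (routine: $\ker Z \subseteq \ker \Delta$ gives $\Delta = AZ$, and $\mathrm{range}(A)=\mathrm{range}(\Delta)\subseteq\mathrm{range}(Z^\T)$ gives $A = Z^\T S$; symmetry of $\Delta$ forces $S$ symmetric). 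Then $\langle S, z_i z_i^\T\rangle = z_i^\T S z_i = \Delta_{ii} = 0$, while $S \ne 0$ since $\Delta \ne 0$.

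To conclude, recall that the matrices $z_i z_i^\T$ all lie in the space $\mathcal S^r$ of symmetric $r \times r$ matrices, which has dimension $\binom{r+1}{2}$. A nonzero $S \in \mathcal S^r$ orthogonal, in the trace inner product, to every $z_i z_i^\T$ exists precisely when $\Span\{z_1 z_1^\T,\dots,z_n z_n^\T\}$ is a proper subspace of $\mathcal S^r$, i.e.\ when $\dim\Span\{z_1z_1^\T,\dots,z_nz_n^\T\} < \binom{r+1}{2}$. Combined with the previous step, $E$ is an extreme point of $\ME_n$ if and only if \eqref{eqrkE} holds. The final assertion is then immediate: when $E$ is extreme, the span in \eqref{eqrkE} is generated by the $n$ matrices $z_i z_i^\T$, so $\binom{r+1}{2} \le n$.

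The main obstacle is the inclusion $\ker E \subseteq \ker \Delta$ in the converse direction; I would obtain it from the standard fact that the kernel of a convex combination of positive semidefinite matrices is the intersection of their kernels (for $v \in \ker E$, the identity $v^\T E_1 v + v^\T E_2 v = 2\,v^\T E v = 0$ with both summands nonnegative forces $E_1 v = E_2 v = 0$, hence $\Delta v = 0$). Everything else is routine linear algebra: identifying feasible symmetric perturbations of $E$ inside $\ME_n$ with the subspace $\{Z^\T S Z : S \in \mathcal S^r\}$, and a dimension count in $\mathcal S^r$.
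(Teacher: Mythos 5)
Your proof is correct and complete. A small remark for context: the paper itself does not prove Theorem~\ref{theoLiTam}; it cites it from Li and Tam, so there is no in-paper argument to compare against. Your argument is the standard one (and essentially the one in Li--Tam's note): identify the feasible symmetric perturbations of $E$ that stay inside $\ME_n$ with the subspace $\{Z^\T S Z : S \in \mathcal S^r\}$ subject to the zero-diagonal constraint $\langle S, z_iz_i^\T\rangle = 0$, and reduce extremality to a dimension count in $\mathcal S^r$. All the key steps check out: $\Delta = Z^\T S Z \neq 0$ follows from $Z$ having full row rank $r$; the converse factorization $\Delta = Z^\T S Z$ with $S \in \mathcal S^r$ is justified by the kernel inclusion $\ker E = \ker Z \subseteq \ker \Delta$ together with symmetry of $\Delta$ (one can make this concrete via $S = (ZZ^\T)^{-1}Z\Delta Z^\T(ZZ^\T)^{-1}$); and the dimension count correctly produces both the equivalence with \eqref{eqrkE} and the consequence $\binom{r+1}{2} \le n$.
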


\begin{example} [\cite{LiTam}] \label{LiTamLem}
For each integer $r\ge 1$ there exists an extreme point of $\mathcal E_n$ of rank $r$, where  $n = \binom{r+1}{2}$. For example, let $e_1, \ldots, e_r$ be the standard basis vectors of $\R^r$ and define $$E = \mathrm{Gram}\Big(e_1, \ldots, e_r, \frac{e_1 + e_2}{\sqrt{2}}, \frac{e_1 + e_3}{\sqrt{2}}, \ldots, \frac{e_{r-1} + e_r}{\sqrt{2}}\Big).
$$ Then $E$ is an extreme point of $\ME_n$ of rank $r$.
\end{example}

Note that the above example is optimal in the sense that a rank $r$ extreme point of $\ME_n$ can  exist only if $n \geq \smash{\binom{r+1}{2}}$ (by Theorem~\ref{theoLiTam}). By combining this with Lemma~\ref{lemElliptopeCor}, this gives a class of extreme points  of $\Cor(m,n)$ with rank $r$ and $m=n={r+1\choose 2}$.

\subsection{Tsirelson's bound}
If $C$ is an extreme point of $\Cor(m,n)$ with rank $r$, then by Theorems~\ref{theoExCor} and \ref{theoLiTam} we have ${r+1\choose 2}\le m+n$. Tsirelson~\cite{Tsirelson} claimed  the stronger bound ${r+1\choose 2}\le m+n-1$ (see Corollary~\ref{corbound} below). 
In the rest of this section we  show how to derive this stronger bound of Tsirelson (which is given in \cite{Tsirelson} without proof). In the next section, we construct two classes of extreme bipartite correlation matrices, of which one meets Tsirelson's bound. To show Tsirelson's bound  we need to investigate in more detail the unique extension property for extreme points of $\Cor(m,n)$. 

Let $C\in \Cor(m,n)$ with rank $r$, let $\{x_s\}$, $\{y_t\}$ be a $C$-system in $\R^r$, and let 
\[
E=\Gram(x_1,\ldots,x_m,y_1,\ldots,y_n)\in\ME_{m+n}.
\]
In view of Theorem~\ref{theoExCor}, if $C$ is an extreme point of $\Cor(m,n)$,
then $E$ is the unique extension of $C$ in $ \ME_{m+n}$.
This uniqueness property can be rephrased as the requirement that an associated semidefinite program has a unique solution. Namely, consider the following dual pair of semidefinite programs:
\begin{equation}\label{eqsdpP}
\max \Big\{ 0 : X \in \mathcal S_+^{S \cup T}, \, X_{k,k}=1 \text{ for } k\in S \cup T, \, X_{s,t} = C_{s,t} \text{ for } s\in S,t\in T \Big\},
\end{equation}
\begin{equation}\label{eqsdpD}
\min \Big\{ \sum_{s\in S} \lambda_s +\sum_{t\in T}\mu_t +2\sum_{s\in S,t\in T}W_{s,t} C_{s,t} : \Omega=\begin{pmatrix}\Diag(\lambda) & W \cr W^\T  & \Diag(\mu) \end{pmatrix} \in \mathcal S_+^{S \cup T} \Big\}.
\end{equation} 
The feasible region of  problem (\ref{eqsdpP}) consists of all possible extensions of $C$ in $\ME_{m+n}$, and the feasible region of  (\ref{eqsdpD}) consists of  the positive semidefinite matrices $\Omega$ whose support  (consisting of all off-diagonal pairs $(i,j)$ with $\Omega_{i,j}\ne 0$) is contained in the complete bipartite graph with bipartition $S\cup T$. Moreover, 
the optimal values of both problems are equal to $0$. Finally, for any primal feasible (optimal) $X$ and dual optimal $\Omega$, equality $\Omega X=0$ holds, which implies that $\rank(X)+\rank(\Omega) \le m+n$. 

 Theorem \ref{theoLV} below (shown in \cite{LV14} in the more general context of universal rigidity) shows that if equality $\rank(X)+\rank(\Omega)=m+n$ holds (also known as \emph{strict complementarity}), then $X$ is in fact the {\em unique} feasible solution of program (\ref{eqsdpP}), and thus  $C$ has a {\em unique} extension in $\ME_{m+n}$.

\begin{theorem}
\label{theoLV}
Let $C\in \Cor(m,n)$ and let $\{x_s\}$, $\{y_t\}$ be a $C$-system spanning $\R^r$. Assume $E=\Gram(x_1,\ldots,x_m,y_1,\ldots,y_n)$ is an extreme point of $\ME_{m+n}$. If there exists an optimal solution $\Omega$ of program (\ref{eqsdpD}) with $\rank(\Omega)=m+n-r$, then $E$ is the only extension of $C$ in $\ME_{m+n}$.
\end{theorem}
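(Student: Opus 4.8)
The plan is to read off the conclusion from the strong duality of the pair \eqref{eqsdpP}--\eqref{eqsdpD} together with the extreme-point characterization of Theorem~\ref{theoLiTam}; no genuinely new ingredient is required. First I would record that, since the $C$-system $\{x_s\},\{y_t\}$ spans $\R^r$, the matrix $E=\Gram(x_1,\ldots,x_m,y_1,\ldots,y_n)$ has rank exactly $r$ and is feasible — hence optimal, the objective of \eqref{eqsdpP} being the constant $0$ — for the primal program; moreover feasible solutions of \eqref{eqsdpP} are precisely the extensions of $C$ in $\ME_{m+n}$, so the statement to prove is $\fib(C)=\{E\}$. Now invoke the given dual optimal $\Omega$ with $\rank(\Omega)=m+n-r$. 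By the complementary slackness already noted above — $\langle X,\Omega\rangle$ equals the dual objective value $0$ for every primal feasible $X$, and $X,\Omega\succeq 0$ then force $X\Omega=0$ — we obtain $\Omega E=0$ and $\Omega X=0$ for every $X\in\fib(C)$.

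Next I would turn this rank equality into a range containment. From $\Omega E=0$ we get $\operatorname{range}(E)\subseteq\ker(\Omega)$, and since $\dim\ker(\Omega)=(m+n)-\rank(\Omega)=r=\rank(E)$, in fact $\operatorname{range}(E)=\ker(\Omega)$. Consequently, for any $X\in\fib(C)$ the relation $\Omega X=0$ gives $\operatorname{range}(X)\subseteq\ker(\Omega)=\operatorname{range}(E)$. Writing $E=B^\T B$ with $B=[\,x_1\ \cdots\ x_m\ y_1\ \cdots\ y_n\,]\in\R^{r\times(m+n)}$ (so $\operatorname{range}(E)=\operatorname{range}(B^\T)$ and $B$ has full row rank $r$), the matrix $W:=X-E$ is symmetric, has zero diagonal — both $X$ and $E$ have all-ones diagonal — and satisfies $\operatorname{range}(W)\subseteq\operatorname{range}(B^\T)$; hence $W=B^\T\widetilde W B$ for some symmetric $\widetilde W\in\mathcal S^r$.

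Finally I would close with Theorem~\ref{theoLiTam}. The vanishing of the $s$-th (resp.\ $(m+t)$-th) diagonal entry of $W=B^\T\widetilde W B$ reads $\langle\widetilde W,x_sx_s^\T\rangle=0$ (resp.\ $\langle\widetilde W,y_ty_t^\T\rangle=0$) for all $s\in S$, $t\in T$, i.e.\ $\widetilde W$ is orthogonal to $\Span\{x_1x_1^\T,\ldots,x_mx_m^\T,y_1y_1^\T,\ldots,y_ny_n^\T\}$. Since $E$ is an extreme point of $\ME_{m+n}$, Theorem~\ref{theoLiTam} identifies this span with the whole space $\mathcal S^r$, whence $\widetilde W=0$ and $X=E$; thus $\fib(C)=\{E\}$. (Equivalently, one can bypass Theorem~\ref{theoLiTam}: if $W\neq 0$ then $E\pm\varepsilon W\in\ME_{m+n}$ for small $\varepsilon>0$, contradicting extremality of $E$.) I do not expect a real obstacle here — the substantive inputs, strong duality and the complementary-slackness identity for \eqref{eqsdpP}--\eqref{eqsdpD}, are already in place — the only point needing care being the dimension bookkeeping that makes $\dim\ker(\Omega)$ equal $\rank(E)$ exactly, which is precisely where the strict-complementarity hypothesis $\rank(\Omega)=m+n-r$ is used.
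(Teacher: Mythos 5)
Your proof is correct and complete. It is worth noting, though, that the paper does not actually spell out a proof here: it simply invokes \cite[Theorem 3.2]{LV14} (a universal-rigidity statement about bar frameworks on $K_{m,n}$) as a black box. What you have done, in effect, is reconstruct the underlying argument directly in the present SDP language, and it is the right argument. The key moves are: (a) every $X\in\fib(C)$ satisfies $\Omega X=0$ because the optimal values of \eqref{eqsdpP}--\eqref{eqsdpD} are $0$ (this complementary slackness is recorded in the paper just before the theorem); (b) the rank hypothesis forces $\ker(\Omega)=\operatorname{range}(E)=\operatorname{range}(B^\T)$, so $\operatorname{range}(X-E)\subseteq\operatorname{range}(B^\T)$ and, by symmetry, $X-E=B^\T\widetilde W B$ for some $\widetilde W\in\mathcal S^r$; (c) the vanishing diagonal of $X-E$ says $\widetilde W\perp\Span\{x_sx_s^\T,y_ty_t^\T\}$, and Theorem~\ref{theoLiTam} (extremality of $E$) makes this span all of $\mathcal S^r$, so $\widetilde W=0$. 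Relative to the paper's one-line citation, your version has the virtue of being self-contained and of making transparent exactly where each hypothesis enters (the rank of $\Omega$ fixes $\ker(\Omega)$, and extremality of $E$ fixes $\widetilde W$); the cost is length. One small remark: in step (b) you correctly note that symmetry of $W=X-E$ is needed to pass from $\operatorname{range}(W)\subseteq\operatorname{range}(B^\T)$ to $W=PWP=B^\T\widetilde W B$ — the one-sided range containment alone would only give $W=PW$. That is the only place where a careless reader could stumble, and you handle it correctly.
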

\begin{proof}
Apply  \cite[Theorem 3.2]{LV14} to the bar framework $G(\mathbf p)$, where $G$ is the complete bipartite graph $K_{m,n}$ with bipartition $S\cup T$ and   $\mathbf p=\{x_s (s\in S), y_t (t\in T)\}$. Note that  the conditions  (v), (vi) in \cite[Theorem 3.2]{LV14} follow from $\Omega E=0$ and  the fact that $\{x_s\}, \{y_t\}\subset \R^r$ are C-systems spanning $\R^r$.
\end{proof}

In addition one can relate uniqueness of an extension of $C$ in the elliptope to the existence of a quadric separating the two point sets $\{x_s\}$ and $\{y_t\}$ (Theorem~\ref{theoExCorTsi} below). Roughly speaking, such a quadric allows us to construct a suitable optimal dual solution $\Omega$ and to apply Theorem \ref{theoLV}.  This property was  stated by Tsirelson \cite{Tsirelson}, however without proof. Interestingly, an analogous result was shown recently by Connelly and Gortler \cite{CG} in the setting of universal rigidity. We will give a sketch of a proof for Theorem~\ref{theoExCorTsi}. For this we use Theorem~\ref{theoLV}, arguments in \cite{CG}, and the following basic property of semidefinite programs (which can be seen as an analog of Farkas' lemma for linear programs).

\begin{lemma} \label{theoFarkas}
Given $A_1,\ldots,A_m \in \mathcal S^n$ and $b \in \R^m$, and assume that there exists a matrix $X \in \mathcal S^n$ such that $\langle A_j, X \rangle = b_j$ for all $j \in [m]$.
Then exactly one of the following two alternatives holds:
\begin{itemize}
\item[(i)]
There exists a matrix $X \succ 0$ such that $\langle A_j, X\rangle = b_j$ for all $j \in [m]$.
\item[(ii)] There exists $y \in \R^m$ such that $\Omega=\sum_{j=1}^m y_jA_j \succeq 0$, $\Omega\ne 0$, and $\Omega X=0$.
\end{itemize}
\end{lemma}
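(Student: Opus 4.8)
The plan is to reduce this to a standard separation/duality argument for semidefinite programs, essentially mimicking the proof of Farkas' lemma in the linear programming setting but working over the cone $\mathcal S^n_+$. First I would set up the right linear spaces: let $\mathcal L = \{X \in \mathcal S^n : \langle A_j, X\rangle = b_j \text{ for all } j\in[m]\}$ be the affine subspace of solutions, which is nonempty by hypothesis, and let $\mathcal L_0 = \{X\in\mathcal S^n : \langle A_j,X\rangle = 0 \text{ for all } j\}$ be its direction space. The key point is that $\mathcal L$ contains a positive definite matrix (alternative (i)) if and only if $\mathcal L$ meets the interior of $\mathcal S^n_+$; and the orthogonal complement of $\mathcal L_0$ in $\mathcal S^n$ is exactly $\Span\{A_1,\ldots,A_m\}$, so the matrices $\Omega$ appearing in alternative (ii) are precisely the nonzero elements of $\mathcal L_0^\perp$ that lie in $\mathcal S^n_+$ and annihilate $\mathcal L$.

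Next I would argue the two alternatives are mutually exclusive: if both held, pick $X\succ 0$ from (i) and $\Omega$ from (ii); then $\langle \Omega, X\rangle = \Tr(\Omega X) = 0$ since $\Omega X = 0$, but $\Omega\succeq 0$, $\Omega\ne 0$ and $X\succ 0$ force $\langle\Omega,X\rangle > 0$, a contradiction. For the harder direction — that at least one alternative holds — I would assume (i) fails and produce $\Omega$ as in (ii). Since $\mathcal L$ is an affine subspace disjoint from the interior of $\mathcal S^n_+$ (this is exactly the negation of (i)), I would invoke a separating hyperplane theorem: there is a nonzero $\Omega\in\mathcal S^n$ and $c\in\R$ with $\langle \Omega, S\rangle \ge c$ for all $S\in\mathcal S^n_+$ and $\langle\Omega, X\rangle \le c$ for all $X\in\mathcal L$. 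From $\langle\Omega,S\rangle\ge c$ for all $S\succeq 0$, using that $\mathcal S^n_+$ is a cone containing $0$, one gets $c\le 0$ and $\langle\Omega,S\rangle\ge 0$ for all $S\succeq 0$, i.e. $\Omega\in\mathcal S^n_+$ (self-duality of the PSD cone). From $\langle\Omega,X\rangle\le c\le 0$ for all $X$ in the affine set $\mathcal L$: since $\mathcal L$ is affine and $\langle\Omega,\cdot\rangle$ is linear and bounded above on it, it must be constant on $\mathcal L$, so $\langle\Omega,\cdot\rangle$ vanishes on the direction space $\mathcal L_0$, giving $\Omega\in\mathcal L_0^\perp = \Span\{A_j\}$, say $\Omega = \sum_j y_j A_j$. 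Finally, for any fixed $X_0\in\mathcal L$ that exists by hypothesis, $\langle\Omega,X_0\rangle\le c\le 0$; combined with $\langle\Omega,X_0\rangle = \Tr(\Omega X_0)$ and $\Omega, $ well — here one needs $\langle\Omega,X_0\rangle\ge 0$, which does not follow yet since $X_0$ need not be PSD.

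To close this last gap I would instead choose the separating functional more carefully, or use the following cleaner route: replace $\mathcal L$ by $\mathcal L$ translated, and separate the cone $\mathcal S^n_+$ (a closed convex cone) from the affine set $\mathcal L$, which are disjoint by assumption; a proper separating hyperplane through the origin (obtainable because $\mathcal S^n_+$ is a cone) then gives $\langle\Omega, S\rangle \ge 0$ on $\mathcal S^n_+$ and $\langle\Omega, X\rangle \le 0$ on $\mathcal L$, and since $\mathcal L$ is affine the latter forces $\langle\Omega,X\rangle = 0$ identically on $\mathcal L$ (not merely $\le 0$), hence both $\Omega\in\Span\{A_j\}$ and $\langle\Omega, X_0\rangle = 0$. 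Now $\Omega\succeq 0$, $X_0\in\mathcal L$, and — applying the argument with $X_0$ replaced by any element of $\mathcal L$, in particular one can further intersect with a generic perturbation to ensure $\Tr(\Omega X) = 0$ for the full affine family — one deduces $\Omega X = 0$: indeed $\langle\Omega, X\rangle = 0$ for all $X\in\mathcal L$, and writing $\mathcal L = X_0 + \mathcal L_0$ we get $\langle\Omega, Z\rangle = 0$ for all $Z\in\mathcal L_0$ and $\langle\Omega, X_0\rangle = 0$; since $\Omega\succeq 0$ the latter gives $\Omega X_0 = 0$, and then for the original matrix $X$ in the statement (which lies in $\mathcal L$) we likewise get $\Omega X = 0$. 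The main obstacle, and the step I would be most careful about, is exactly this passage from the scalar separation inequalities to the matrix equation $\Omega X = 0$: one must exploit that the supporting functional vanishes on the entire affine space $\mathcal L$ (forcing $\langle\Omega, X_0\rangle = 0$ for the specific $X_0$ that is assumed to exist) together with $\Omega\succeq 0$ to upgrade $\Tr(\Omega X_0) = 0$ to $\Omega X_0 = 0$ via the fact that $\Tr(\Omega^{1/2} X_0 \Omega^{1/2}) = 0$ with $\Omega^{1/2} X_0 \Omega^{1/2}\succeq 0$ — this last implication does require $X_0\succeq 0$, so in fact the cleanest phrasing is: the separation shows $\Omega\in\mathcal S^n_+$ annihilates $\mathcal L$ in the sense $\langle\Omega, X\rangle = 0$ for all $X\in\mathcal L$; then the particular $X$ named in the statement of the lemma satisfies $\langle\Omega, X\rangle = 0$ and, choosing $X$ to be (a suitable) feasible solution, $\Omega X = 0$ follows — and one verifies that this is what is actually used downstream in Theorem~\ref{theoExCorTsi}.
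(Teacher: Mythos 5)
You correctly reduce the lemma to a separating-hyperplane argument, and in your first pass you in fact correctly identify the gap: the separation yields $\langle\Omega, X_0\rangle \leq c \leq 0$, and this cannot be upgraded to $\langle\Omega, X_0\rangle = 0$ when $X_0$ is merely symmetric. However, the ``cleaner route'' you then propose does not close this gap, for two reasons. First, $\mathcal L$ and $\mathcal S^n_+$ need not be disjoint when (i) fails --- (i) failing only means $\mathcal L$ misses the interior $\mathcal S^n_{++}$, and indeed in the paper's application (Theorem~\ref{theoExCorTsi}) $\mathcal L$ meets the boundary of $\mathcal S^n_+$ at $E$. Second, and decisively, the step ``since $\mathcal L$ is affine the latter forces $\langle\Omega, X\rangle = 0$ identically on $\mathcal L$'' is false: affineness of $\mathcal L$ forces $\langle\Omega, \cdot\rangle$ to be \emph{constant} on $\mathcal L$, but the constant can be strictly negative. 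For example, with $A_1 = \mathrm{diag}(1,0)$, $b_1 = -1$, and $\Omega = A_1 \succeq 0$, one has $\langle\Omega, X\rangle \equiv -1$ on $\mathcal L$.

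In fact the lemma is false as literally stated: with $n=2$, $A_1 = \mathrm{diag}(1,0)$, $b_1 = 0$, and the feasible $X = \begin{pmatrix}0&1\\1&0\end{pmatrix}$, alternative (i) fails (no positive definite matrix has a zero diagonal entry), and alternative (ii) also fails for this $X$ since the only candidates $\Omega = y_1 A_1 \succeq 0$, $\Omega \ne 0$, have $\Omega X \ne 0$. The intended (and, in Theorem~\ref{theoExCorTsi}, the used) hypothesis is that the given feasible $X$ is positive semidefinite. With $X \succeq 0$ added to the hypothesis your first-pass argument closes: separation gives $\Omega \succeq 0$, $\Omega \in \Span\{A_j\}$, and $\langle\Omega, \cdot\rangle$ constant $\le 0$ on $\mathcal L$; now $\Omega, X \succeq 0$ gives $\langle\Omega, X\rangle \ge 0$, forcing the constant to be zero, and then $\Tr(\Omega X)=0$ with $\Omega, X \succeq 0$ yields $\Omega X = 0$. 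Your closing sentences do reach this conclusion, but the intervening ``affineness forces vanishing'' claims have to be replaced by the positive-semidefiniteness argument you yourself spelled out, and the erroneous claim that $\mathcal L$ and $\mathcal S^n_+$ are disjoint has to be dropped. (The paper states this lemma without proof, so there is no proof to compare against directly.)
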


\begin{theorem}[{\cite[Theorems 2.21-2.22]{Tsirelson}}]
\label{theoExCorTsi}
Let $C\in \Cor(m,n)$, let $\{x_s\}$, $\{y_t\}$ be a $C$-system spanning $\R^r$, and let
$E=\Gram(x_1,\ldots,x_m,y_1,\ldots,y_n)\in\ME_{m+n}$.
\begin{itemize}
\item[(i)] If $C$ is an extreme point of $\Cor(m,n)$, then there exist nonnegative scalars $\lambda_1,\ldots,\lambda_m,$ $\mu_1,\ldots,\mu_n$, not all equal to zero, such that 
\begin{equation}\label{eqTsi}
\sum_{s=1}^m \lambda_s x_sx_s^\T = \sum_{t=1}^n \mu_t y_ty_t^\T .
\end{equation}
\item[(ii)] If  $E$ is an extreme point of $\ME_{m+n}$ and there exist strictly positive scalars $\lambda_1,\ldots,\lambda_m,\mu_1,\ldots,\mu_n$ for which relation (\ref{eqTsi}) holds, then $C$ is an extreme point of $\Cor(m,n)$.
\end{itemize}
\end{theorem}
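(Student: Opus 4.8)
The plan is to establish part (i) by a limiting/compactness argument combined with Lemma~\ref{theoFarkas}, and part (ii) by directly producing an optimal dual solution of rank $m+n-r$ and invoking Theorem~\ref{theoLV}.

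For part (i), the key observation is that the uniqueness of the extension $E$ of $C$ (which follows from Theorem~\ref{theoExCor}, since $C$ extreme in $\Cor(m,n)$ forces $E$ to be its unique extension) can be read off from the semidefinite program \eqref{eqsdpP}: its feasible region is exactly $\fib(C)$, so $X=E$ is the \emph{unique} feasible point. I would apply Lemma~\ref{theoFarkas} to the affine system defining \eqref{eqsdpP} (the constraints $X_{kk}=1$ for $k\in S\cup T$ and $X_{s,t}=C_{s,t}$ for $(s,t)\in S\times T$). Since the feasible set is the single point $E$, alternative~(i) of Lemma~\ref{theoFarkas} (existence of a feasible $X\succ 0$) fails — $E$ has rank $r$, and if $r<m+n$ there is no positive definite feasible point; if $r=m+n$ then $E$ is the identity and $C=0$, a degenerate case one handles separately (or notes that then $m=n=0$ is impossible, or the statement is vacuous). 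Hence alternative~(ii) holds: there is $\Omega\neq 0$ in the span of the constraint matrices with $\Omega\succeq 0$ and $\Omega E=0$. But the span of the constraint matrices is precisely the set of matrices $\Omega=\left(\begin{smallmatrix}\Diag(\lambda) & W\\ W^\T & \Diag(\mu)\end{smallmatrix}\right)$ appearing in \eqref{eqsdpD}. Writing $\Omega E=0$ in block form and using $E=\Gram(\{x_s\},\{y_t\})$, the $(S,S)$ block of $\Omega E\,E^\T=0$ (equivalently $\Omega E=0$ composed with the Gram structure) yields $\sum_s\lambda_s x_sx_s^\T + \sum_{s,t}W_{s,t}(x_sy_t^\T+\ldots)=\ldots$; more cleanly, $\Omega\succeq 0$ with $\Omega E=0$ gives $W y_t = -\Diag(\lambda)x_t$-type relations. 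The cleanest route is: $\Omega\succeq 0$ and $\Omega E = 0$ imply $\langle\Omega, E\rangle=0$, i.e. $\sum_s\lambda_s+\sum_t\mu_t+2\sum_{s,t}W_{s,t}C_{s,t}=0$; combined with feasibility this forces the diagonal blocks to dominate, and a Schur-complement argument shows $\sum_s\lambda_sx_sx_s^\T=\sum_t\mu_ty_ty_t^\T$ with $\lambda,\mu\geq 0$. I expect \textbf{this step — extracting the clean identity \eqref{eqTsi} with the correct nonnegativity from the raw statement $\Omega\succeq 0$, $\Omega E=0$ — to be the main obstacle}, since one must argue that the off-diagonal block $W$ does not spoil nonnegativity of $\lambda$ and $\mu$; this is where one imports the argument of Connelly–Gortler \cite{CG}.

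For part (ii), I would run the implication in reverse and use Theorem~\ref{theoLV} directly. Given strictly positive $\lambda_s,\mu_t$ with $\sum_s\lambda_sx_sx_s^\T=\sum_t\mu_ty_ty_t^\T=:P$, I construct a dual feasible $\Omega$ for \eqref{eqsdpD} as follows. Pick $W$ so that $\Omega=\left(\begin{smallmatrix}\Diag(\lambda) & -W\\ -W^\T & \Diag(\mu)\end{smallmatrix}\right)$ satisfies $\Omega E=0$; concretely, the relation \eqref{eqTsi} is exactly what makes the system $\Diag(\lambda)x_s = W y$-consistent, letting one define $W_{s,t}$ via the common matrix $P$ (e.g. using that both $\{x_s\}$ and $\{y_t\}$ span $\R^r$, write $x_s=\sum_t c_{st}y_t$ and set $W_{s,t}=\lambda_s c_{st}$, then check $\Omega E=0$ and $\Omega\succeq 0$ using positivity of $\lambda,\mu$). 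One verifies $\Omega\succeq 0$ with kernel exactly the range of $E$, so $\rank(\Omega)=(m+n)-\rank(E)=(m+n)-r$ (here one uses that $E$ is an extreme point of $\ME_{m+n}$, so its rank equals $r=\rank(C)$ by Lemma~\ref{lemExVec}, and that the $C$-system spans $\R^r$). Then $\Omega$ is an optimal solution of \eqref{eqsdpD} with $\rank(\Omega)=m+n-r$, so Theorem~\ref{theoLV} applies and shows $E$ is the unique extension of $C$ in $\ME_{m+n}$; since $E$ is assumed extreme in $\ME_{m+n}$, Theorem~\ref{theoExCor} then gives that $C$ is an extreme point of $\Cor(m,n)$, completing the proof.

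I would present part (ii) in full and part (i) as a sketch (the statement already promises only a sketch), citing \cite{CG} and Lemma~\ref{theoFarkas} for the extraction of \eqref{eqTsi}. The two potential pitfalls to flag are the degenerate case $r=m+n$ in part~(i) and the verification that the constructed $\Omega$ in part~(ii) has the \emph{exact} rank $m+n-r$ rather than merely $\Omega E=0$; both are handled by the spanning hypothesis on the $C$-system together with $\lambda,\mu>0$.
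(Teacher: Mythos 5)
Your overall architecture matches the paper's: part~(i) via Lemma~\ref{theoFarkas} applied to~\eqref{eqsdpP}, part~(ii) by constructing an explicit rank-$(m+n-r)$ dual optimal $\Omega$ and invoking Theorem~\ref{theoLV}. But you have misplaced the difficulty, and the result is a genuine gap in part~(i).

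You flag the passage from ``$\Omega\neq 0$, $\Omega\succeq 0$, $\Omega E=0$'' to~\eqref{eqTsi} as the ``main obstacle'' requiring the Connelly--Gortler argument, and you worry that $W$ might spoil nonnegativity of $\lambda,\mu$. Neither concern is real, and the Schur-complement/domination route you gesture at is not what is needed. Nonnegativity of $\lambda_s$ and $\mu_t$ is immediate: they are diagonal entries of the positive semidefinite matrix $\Omega$. And~\eqref{eqTsi} follows from $\Omega E=0$ by a short computation that you do not carry out. Write $E=PP^\T$ with $P$ the $(m+n)\times r$ matrix whose rows are $x_s^\T$ ($s\in S$) and $y_t^\T$ ($t\in T$); then $\Omega E=0$ is equivalent to $\Omega P=0$, which in block form reads $\lambda_s x_s+\sum_{t\in T}W_{s,t}y_t=0$ for all $s\in S$ and $\mu_t y_t+\sum_{s\in S}W_{s,t}x_s=0$ for all $t\in T$. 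Multiplying the first set on the right by $x_s^\T$, the second by $y_t^\T$, summing over $s$ (resp.\ $t$), and identifying the common cross term $-\sum_{s,t}W_{s,t}y_tx_s^\T$ gives $\sum_s\lambda_s x_sx_s^\T=\sum_t\mu_t y_ty_t^\T$. That is the whole of part~(i); no rigidity machinery is needed, and the paper gives this derivation in full. (Your side worry about $r=m+n$ is also vacuous: for extreme $C$, $r=\rank(E)=\rank(C)\leq\min(m,n)<m+n$ by Lemma~\ref{lemExVec}(ii).)

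For part~(ii) you propose a concrete $W$ via $x_s=\sum_t c_{st}y_t$ and $W_{s,t}=\lambda_s c_{st}$. This is underdetermined and does not close: when $\{y_t\}$ is overcomplete the $c_{st}$ are not unique, the equations $\mu_t y_t-\sum_s W_{s,t}x_s=0$ need not hold for an arbitrary choice (the identity~\eqref{eqTsi} only gives $\sum_t(\mu_t y_t-\sum_s W_{s,t}x_s)y_t^\T=0$, not termwise vanishing), and you never verify $\Omega\succeq 0$ or that $\ker\Omega$ is exactly the column span of $P$, which is what pins down $\rank(\Omega)=m+n-r$. It is precisely here, not in part~(i), that the Connelly--Gortler construction is imported; the paper accordingly leaves part~(ii) as a sketch citing~\cite{CG}. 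In short: you proved the easy part in sketch form and sketched the hard part with a construction that does not yet work; swap the level of detail and supply the two-line algebra for~(i).
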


\begin{proof}
(i) By assumption, $C$ is an extreme point of $\Cor(m,n)$, so by Lemma~\ref{lemExVec}~(ii) $E$ is the only feasible solution of the program (\ref{eqsdpP}).
As $E$ has rank $r<m+n$, it follows that the program (\ref{eqsdpP}) does not have a positive definite feasible solution.
Applying Lemma~\ref{theoFarkas} it follows  that there exists a nonzero matrix $\Omega$ that is feasible for the dual program (\ref{eqsdpD}) and satisfies   $\Omega E=0$.
This   gives:
$$\lambda_s x_s +\sum_{t\in T} W_{s,t}y_t=0 \ (s\in S),\ \ 
\mu_ty_t +\sum_{s\in S} W_{s,t} x_s=0\ (t\in T).$$
Since $\Omega\succeq 0$, the scalars $\lambda_s,\mu_t$ are nonnegative. We claim that they satisfy (\ref{eqTsi}). We multiply the left relation by $x_s^\T$ and the right one by $y_t^\T$ to obtain
$$\lambda_s x_s x_s^\T  +\sum_{t\in T} W_{s,t} y_t x_s^\T=0 \ (s\in S),\ \ 
\mu_ty_t y_t^\T +\sum_{s\in S} W_{s,t} x_s y_t^\T=0\ (t\in T).$$
Summing the left relation over $s\in S$,  and summing the right relation over $t\in T$ and taking the transpose, we get:
$$\sum_{s\in S} \lambda_s x_s x_s^\T= -\sum_{s\in S}\sum_{t\in T}W_{s,t}y_t x_s^\T= \sum_{t\in T} \mu_t y_t y_t^\T,$$
and thus (\ref{eqTsi}) holds. 

(ii) Assume that $E$ is an extreme point of $\ME_{m+n}$ and that there exist strictly positive scalars $\lambda_1,\ldots,\lambda_m,\mu_1,\ldots,\mu_n$ for which (\ref{eqTsi}) holds.
The key idea is to construct a matrix $\Omega$ that is optimal for the program (\ref{eqsdpD}) and has rank $m+n-r$, since then we can apply Theorem~\ref{theoLV} and conclude that $E$ is the only extension of $C$ in $\ME_{m+n}$. The construction of such a matrix $\Omega$ is analogous to the construction given in \cite{CG} for frameworks (see Theorem 4.3  and its proof), so we omit the details.
\end{proof}

\begin{corollary}[\cite{Tsirelson}]\label{corbound}
If $C$ is an extreme point of $\Cor(m,n)$, then
\begin{equation}\label{eqrankC}
{\rank (C)+1\choose 2} \le n+m-1.
\end{equation}
\end{corollary}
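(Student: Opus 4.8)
The plan is to combine the rank count coming from strict complementarity with Tsirelson's characterization of extreme points via relation~\eqref{eqTsi}. Let $C$ be an extreme point of $\Cor(m,n)$ with rank $r$, let $\{x_s\},\{y_t\}$ be a $C$-system spanning $\R^r$, and let $E=\Gram(x_1,\ldots,x_m,y_1,\ldots,y_n)\in\ME_{m+n}$. By Theorem~\ref{theoExCor}, $E$ is an extreme point of $\ME_{m+n}$, so by Theorem~\ref{theoLiTam} we have $\binom{r+1}{2}=\dim(\Span\{x_sx_s^\T\}\cup\{y_ty_t^\T\})$; I will have to squeeze one more dimension out of this span. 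The key point is that Theorem~\ref{theoExCorTsi}(i) gives nonnegative scalars $\lambda_s,\mu_t$, not all zero, with $\sum_s\lambda_s x_sx_s^\T=\sum_t\mu_t y_ty_t^\T$. This is a nontrivial linear relation among the matrices $\{x_sx_s^\T\}\cup\{y_ty_t^\T\}$ (it is nontrivial because some $\lambda_s$ or $\mu_t$ is nonzero, and the corresponding coefficient on that generator is nonzero). Hence the $m+n$ matrices $x_1x_1^\T,\ldots,x_mx_m^\T,y_1y_1^\T,\ldots,y_ny_n^\T$ span a space of dimension at most $m+n-1$, and combining with the dimension equality gives $\binom{r+1}{2}\le m+n-1$, which is exactly~\eqref{eqrankC}.

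First I would invoke Theorem~\ref{theoExCor} to get that $E$ is an extreme point of $\ME_{m+n}$, then apply the Li--Tam criterion (Theorem~\ref{theoLiTam}) to record $\binom{r+1}{2}=\dim(\Span\{x_1x_1^\T,\ldots,x_mx_m^\T,y_1y_1^\T,\ldots,y_ny_ny^\T\})$. Wait --- I should be careful: the span in \eqref{eqrkE} is over the Gram vectors of $E$, which are exactly $x_1,\ldots,x_m,y_1,\ldots,y_n$, so this is fine. Next I would apply Theorem~\ref{theoExCorTsi}(i) to obtain the relation \eqref{eqTsi} with nonnegative scalars not all zero. The only thing to check carefully is that $\sum_s\lambda_s x_sx_s^\T-\sum_t\mu_t y_ty_t^\T=0$ is genuinely a nontrivial dependence among the listed generators --- this holds because at least one coefficient ($\lambda_s$ for some $s$, or $\mu_t$ for some $t$) is nonzero, so this is a nonzero element of the kernel of the linear map sending coefficient vectors to linear combinations. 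Therefore $\dim(\Span\{x_sx_s^\T,y_ty_t^\T\})\le m+n-1$, and chaining the two displayed facts yields $\binom{r+1}{2}\le m+n-1$.

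The only mild subtlety --- and the step I would be most careful about --- is ensuring the linear relation supplied by Theorem~\ref{theoExCorTsi}(i) really does cut the dimension down by one rather than being vacuous. This is immediate since the scalars are "not all equal to zero": if, say, $\lambda_{s_0}\neq 0$, then the coefficient vector $(\lambda_1,\ldots,\lambda_m,-\mu_1,\ldots,-\mu_n)\in\R^{m+n}$ is nonzero and lies in the kernel of the map $\R^{m+n}\to\mathcal S^r$, $(a_s,b_t)\mapsto \sum_s a_s x_sx_s^\T+\sum_t b_t y_ty_t^\T$, whose image is precisely $\Span\{x_sx_s^\T,y_ty_t^\T\}$. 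By rank--nullity this image has dimension at most $m+n-1$. No hard estimates are involved; the corollary is essentially a bookkeeping consequence of the two preceding theorems.
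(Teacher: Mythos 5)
Your proof is correct and follows essentially the same route as the paper: invoke Theorem~\ref{theoExCor} to get extremality of $E$, use the Li--Tam criterion~\eqref{eqrkE} to identify $\binom{r+1}{2}$ with the dimension of the span of the rank-one matrices $x_sx_s^\T,y_ty_t^\T$, and then use Theorem~\ref{theoExCorTsi}(i) to exhibit a nontrivial linear dependence~\eqref{eqTsi} among these $m+n$ matrices, forcing that dimension to be at most $m+n-1$. The rank--nullity phrasing of the last step is a cosmetic variant of the paper's ``spanned by a set of $m+n-1$ matrices,'' and the care you take to confirm the relation is genuinely nontrivial (since the scalars are not all zero) is exactly the point to verify.
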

\begin{proof}
Let $x_1,\ldots,x_m,y_1,\ldots,y_n \in \R^r$, with $r = \rank(C)$, be a $C$-system spanning $\R^r$ and let $E$ be their Gram matrix.
As $E$ is an extreme point of $\ME_{m+n}$, it follows from relation (\ref{eqrkE}) that $\mathcal S^r$ is spanned by the $m+n$ matrices $x_ix_i^\T , y_jy_j^\T $ ($i\in S, j\in T$). Combining this with the identity (\ref{eqTsi}) this implies that $\mathcal S^r$ is spanned by a set of $m+n-1$ matrices and thus its dimension ${r+1\choose 2}$ is at most $m+n-1$.
\end{proof}
Our first construction in the next section provides instances where the bound (\ref{eqrankC}) is tight.

\subsection{Constructions of extreme bipartite correlation matrices}

We construct two families of extreme points of $\Cor(m,n)$, which we will use in Section~\ref{secfinal} to construct completely positive semidefinite matrices with exponentially large completely positive semidefinite rank. The first construction meets Tsirelson's bound and is used to prove Theorem~\ref{theomain}. The second construction will be used to recover one of the results of \cite{Slofstra11}.

\medskip
We begin with  constructing a  family of extreme points $C_1$ of $\Cor(m,n)$ with $\rank(C_1)=r$, $m=r$, and $n={r\choose 2}+1$, which thus shows that inequality \eqref{eqrankC} is tight. Such a family of bipartite correlation matrices can also be inferred from \cite{VertesiPal}, where the correlation matrices are obtained through analytical methods as optimal solutions of linear optimization problems over $\Cor(m, n)$. Instead, we use the sufficient conditions for extremality of bipartite correlations given above.

For this we will construct matrices $E_1,\Omega_1\in \mathcal S^{r+n}$ that satisfy the conditions of Theorem~\ref{theoLV}; that is, $E_1$ is an extreme point of $\ME_{r+n}$, $\Omega_1$ is positive semidefinite with support contained in the complete bipartite graph $K_{r,n}$, $\rank(E_1)=r$, $\rank(\Omega_1)=n$, and $\Omega_1 E_1=0$. Our construction of $\Omega_1$ is inspired by \cite{GP}, which studies the maximum possible rank of  extremal positive semidefinite matrices with a complete bipartite support.

Consider the matrix  $\widehat B\in \R^{r\times {r\choose 2}}$, whose columns are indexed by the pairs $(i,j)$ with $1\le i<j\le r$,  with entries $\smash{\widehat B_{i, (i,j)}}=1$, $\smash{\widehat B_{j,(i,j)}}=-1$ for $1\le i<j\le r$, and all other entries 0. We also consider the matrix $B \in \R^{r\times n}$ obtained by adjoining to $\smash{\widehat B}$ a last column equal to the all-ones vector $e$. Note that $BB^\T = r I_r$ and $\smash{\widehat B\widehat B^\T} = rI_r-J_r$.  Then define the following  matrices:
 $$\Omega'=\begin{pmatrix} nI_r & \sqrt n B\cr \sqrt n B^\T  & r I_n\end{pmatrix} \in \mathcal S^{r+n},\ \
 E'= \begin{pmatrix} I_r & -{\sqrt n\over r}B\cr -{\sqrt n \over r} B^\T  & {n\over r^2} B^\T B\end{pmatrix}\in \mathcal S^{r+n}.$$
Since 
\[
\Omega'=\begin{pmatrix} \sqrt{n\over r} B \cr \sqrt r I_n\end{pmatrix} \begin{pmatrix} \sqrt{n\over r} B  \cr \sqrt r I_n\end{pmatrix}^\T \quad \text{and} \quad
E' =\begin{pmatrix} I_r \cr -{\sqrt n\over r}B^\T \end{pmatrix} \begin{pmatrix} I_r \cr -{\sqrt n\over r}B^\T \end{pmatrix}^\T,
\]
it follows that  $\Omega'$ and $E'$ are positive semidefinite,  $\Omega'E'=0$,  $\rank(\Omega')=n$, and $\rank(E')=r$.
It suffices  now to modify the matrix $E'$ in order to get a matrix $E_1$ with an all-ones  diagonal.
For this, consider the diagonal matrix $$D=I_r\oplus {r\over \sqrt {2n}} I_{n-1} \oplus {\sqrt{r\over n}} I_1$$ and set 
$E_1 = D E'D$ and $\Omega_1 = D^{-1} \Omega'D^{-1}.$ 
Then $E_1$ has an all-ones diagonal,  it is in fact the Gram matrix of the vectors $e_1,\ldots,e_r$, $ (e_i-e_j)/\sqrt 2 $ (for $1\le i<j\le r$), and $(e_1+\ldots +e_r)/\sqrt r$, and thus $E_1$ is an extreme point of $\ME_{r+n}$.
Moreover,  $\Omega_1 E_1=0$, 
 $\rank E_1=r$,  and $\rank \Omega_1 = n$.
Therefore   the conditions of Theorem \ref{theoLV} are fulfilled and we can conclude that 
the matrix $C_1=\pi(E_1)$ is an extreme point of $\Cor(r,n)$. 
So we have shown part (i) in Theorem \ref{lemExtremePoint} below.
 
 \medskip 
Our second construction is inspired by the XOR-game considered by  Slofstra in  \cite[Section 7.2]{Slofstra11}. We construct a family of extreme points $C_2$ of $\Cor(m,n)$ with $\rank(C_2) = r-1$, $m =r$ and $n = {r \choose 2}$. Define the $(r+n)\times (r+n)$ matrices 
\[
\Omega_2=\left(\begin{matrix}
{\sqrt n} I_r &  \widehat B\cr  \widehat B^\T & {r\over \sqrt n} I_n\end{matrix}\right), \ \
E_2=\left(\begin{matrix} {1\over r-1}\widehat B\widehat B^\T  & -{r\over 2\sqrt n}\widehat B\cr -{r\over 2\sqrt n}\widehat B^\T & {1\over 2}\widehat B^\T\widehat B\end{matrix}\right).
\]
Note that 
\[
\Omega_2= \sqrt n \begin{pmatrix} \frac{1}{\sqrt r} \widehat B & \frac{1}{\sqrt r}  e \\ \sqrt{\frac{r}{n}} I_n & 0\end{pmatrix} \begin{pmatrix} \frac{1}{\sqrt r} \widehat B & \frac{1}{\sqrt r}  e \\ \sqrt{\frac{r}{n}} I_n & 0\end{pmatrix}^\T,\ \ E_2 =\begin{pmatrix} \frac{-1}{\sqrt{2n}} \widehat B \widehat B^\T \cr \frac{1}{\sqrt{2}} \widehat B^\T \end{pmatrix} \begin{pmatrix} \frac{-1}{\sqrt{2n}} \widehat B \widehat B^\T \cr \frac{1}{\sqrt{2}} \widehat B^\T \end{pmatrix}^\T,
\]
where we use that $\widehat B \widehat B^\T \widehat B = (r I_r - J_r) \widehat B = r \widehat B$. It follows that $\Omega_2$ and $E_2$ are positive semidefinite, $\rank(\Omega_2)= n+1$ and $\rank(E_2)=r-1$. 
Moreover, one can check that $\Omega_2 E_2 = 0$. 
In order to be able to apply Theorem~\ref{theoLV} it remains to verify that $E_2$ is an extreme point of $\mathcal E_{r+n}$. 
 
The above factorization of $E_2$ shows that it is the Gram matrix of the system of vectors in $\R^r$: 
$$\left\{u_k={1\over \sqrt{2n}}(e-re_k): k\in [r]\right\}
\cup \left\{v_{ij}={1\over \sqrt 2}(e_i-e_j): 1\le i<j\le r\right\}.$$
As the vectors $u_k,v_{ij}$ lie in $\R^r$ while $E_2$ has rank $r-1$ we need to consider another Gram representation of $E_2$ by vectors in $\R^{r-1}$. For this, let $Q$ be an $r\times r$ orthogonal matrix with columns $p_1,\ldots,p_r$ and $p_r=1/\sqrt{r}e$. Then the vectors $\smash{\{Q^\T u_k\} \cup \{Q^\T v_{ij}\}}$ form again a Gram representation of $E_2$. Furthermore, as all $u_k,v_{ij}$ are orthogonal to the vector $p_r$ it follows that the vectors $Q^\T u_k$ and $Q^\T v_{ij}$ are all orthogonal to $Q^\T p_r=e_r$. Hence $Q^\T u_k=(x_k,0)$ and $Q^\T v_{ij} = (y_{ij},0)$ for some vectors $x_k,y_{ij} \in \R^{r-1}$ which now provide a Gram representation of $E_2$ in $\R^{r-1}$.

In order to conclude that $E_2$ is an extreme point of $\mathcal E_{r+n}$ it suffices, by Theorem~\ref{theoLiTam}, to verify that the set $\{x_k x_k^\T\} \cup \{ y_{ij} y_{ij}^\T\}$ spans the whole space $\mathcal S^{r-1}$.  Equivalently, we must show that the set $\{Q^\T u_k u_k^\T Q \} \cup \{Q^\T v_{ij} v_{ij}^\T Q\}$ spans the subspace $\{R\oplus 0: R\in\mathcal S^{r-1}\}$ of $\mathcal S^r$, or, in other words, that the set $\{u_k u_k^\T\} \cup \{ v_{ij} v_{ij}^\T\}$ spans the subspace
\[
\mathcal M:=\{Q (R\oplus 0)Q^\T: R\in \mathcal S^{r-1}\}\subseteq \mathcal S^r.
\]
Observe that $\text{dim}(\mathcal M) = {r \choose 2}$. We also have that $\text{span} \{v_{ij} v_{ij}^\T: 1 \leq i < j \leq r\}$ is contained in
\[
\mathrm{span}( \{u_k u_k^\T: k \in [r]\} \cup \{v_{ij} v_{ij}^\T: 1 \leq i < j \leq r\}) \subseteq \mathcal M,
\]
and that
\[
\text{span} \{v_{ij} v_{ij}^\T: 1 \leq i < j \leq r\} = \text{span} \{ (e_i - e_j) (e_i -e_j)^\T: 1 \leq i < j \leq r\}
\] has dimension ${r \choose 2}$. Therefore, equality holds throughout: 
\[
\text{span}( \{u_k u_k^\T: k \in [r]\} \cup \{v_{ij} v_{ij}^\T: 1 \leq i < j \leq r\} )= \mathcal M,
\]
and thus $E_2$ is an extreme point of $\mathcal E_{r+n}$. 
 
This shows that the conditions of Theorem \ref{theoLV} are satisfied and we can conclude that 
the matrix $C_2=\pi(E_2)$ is an extreme point of $\Cor(r,n)$. So we have shown part (ii) in Theorem \ref{lemExtremePoint} below.

\begin{theorem} \label{lemExtremePoint}
Consider an integer $r\ge 1$ and let $e_1,\ldots,e_r$ denote the standard unit vectors in $\R^r$.
\begin{itemize}
\item[(i)]
There exists a matrix $C_1$ which is an extreme point of $\smash{C(r,{r\choose 2}+1)}$ and has rank $r$. We can take $C_1$ to be the matrix with columns $(e_i-e_j)/\sqrt 2$ (for $1\le i<j\le r$) and $(e_1+\ldots+e_r)/\sqrt r$.
\item[(ii)] 
There exists a matrix $C_2$ which is an extreme point of $\Cor(r,{r\choose 2})$ and has rank $r-1$.
We can take $C_2$ to be the matrix whose  columns are $- \sqrt{r/(2(r-1))}(e_i-e_j)$ for $1\le i<j\le r$.
\end{itemize}
\end{theorem}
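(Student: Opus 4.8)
The plan is to derive both statements as direct consequences of Theorem~\ref{theoLV}, which supplies uniqueness of the elliptope extension, together with Theorem~\ref{theoExCor}, which transfers extremality from $\ME_{m+n}$ down to $\Cor(m,n)$, applied to the two explicit pairs $(E_1,\Omega_1)$ and $(E_2,\Omega_2)$ built above. In both cases the routine to follow is identical: for the relevant parameters $m,n$ and a candidate $E\in\ME_{m+n}$ of rank $\rho$, I would verify that (a) $E$ is an extreme point of $\ME_{m+n}$, and (b) there is a matrix $\Omega\succeq0$ whose support lies in the complete bipartite graph $K_{m,n}$, with $\Omega E=0$ and $\rank(\Omega)=m+n-\rho$. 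Theorem~\ref{theoLV} then gives that $E$ is the unique extension of $C:=\pi(E)$, Theorem~\ref{theoExCor} gives that $C$ is an extreme point of $\Cor(m,n)$, and Lemma~\ref{lemExVec}(ii) gives $\rank(C)=\rank(E)=\rho$; the columns of $C$ are then read off from the off-diagonal block of $E$.

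For part~(i) the parameters are $m=r$, $n=\binom r2+1$, $\rho=r$, with $E_1=DE'D$ and $\Omega_1=D^{-1}\Omega'D^{-1}$ as above. The displayed rank-one factorizations already give $\Omega',E'\succeq0$, $\Omega'E'=0$, $\rank(\Omega')=n$, $\rank(E')=r$; conjugating by the diagonal matrix $D$ preserves positive semidefiniteness, the $2\times2$ block sparsity pattern, the ranks, and the identity $\Omega_1E_1=0$. Since the diagonal blocks of $\Omega'$, hence of $\Omega_1$, are scalar matrices, $\Omega_1$ has support in $K_{r,n}$, and $\rank(\Omega_1)=n=(r+n)-r$, so only (a) remains: reading off the Gram vectors of $E_1$ one gets $e_1,\dots,e_r$ together with $(e_i-e_j)/\sqrt2$ $(1\le i<j\le r)$ and $(e_1+\dots+e_r)/\sqrt r$, and because the $\binom{r+1}2$ matrices $e_ie_i^\T$ and $(e_i-e_j)(e_i-e_j)^\T$ span $\mathcal S^r$, Theorem~\ref{theoLiTam} shows $E_1$ is an extreme point of $\ME_{r+n}$. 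Reading off the off-diagonal block of $E_1$ then identifies the columns of $C_1$ as $(e_i-e_j)/\sqrt2$ and $(e_1+\dots+e_r)/\sqrt r$, as claimed.

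For part~(ii) the parameters are $m=r$, $n=\binom r2$, $\rho=r-1$, with $E_2,\Omega_2$ as above. The displayed factorizations give $\Omega_2,E_2\succeq0$, $\rank(\Omega_2)=n+1$, $\rank(E_2)=r-1$; a short computation using $\widehat B\widehat B^\T\widehat B=r\widehat B$ gives $\Omega_2E_2=0$; the diagonal blocks of $\Omega_2$ are scalar matrices, so its support lies in $K_{r,n}$; and $\rank(\Omega_2)=n+1=(r+n)-(r-1)$, exactly what Theorem~\ref{theoLV} needs. The one step that is not mere bookkeeping is (a), that $E_2$ is an extreme point of $\ME_{r+n}$: here $E_2$ is the Gram matrix of the vectors $u_k=(e-re_k)/\sqrt{2n}$ and $v_{ij}=(e_i-e_j)/\sqrt2$, which all lie in the hyperplane $e^\perp\subseteq\R^r$; rotating by an orthogonal matrix $Q$ with last column $e/\sqrt r$ moves them into $\R^{r-1}$, and by Theorem~\ref{theoLiTam} it suffices to show that $\{u_ku_k^\T\}\cup\{v_{ij}v_{ij}^\T\}$ spans the $\binom r2$-dimensional subspace $\{Q(R\oplus0)Q^\T:R\in\mathcal S^{r-1}\}$; this holds because $\{v_{ij}v_{ij}^\T\}=\{(e_i-e_j)(e_i-e_j)^\T\}$ already spans a subspace of it of dimension $\binom r2$. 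Theorems~\ref{theoLV} and~\ref{theoExCor} then give that $C_2=\pi(E_2)=-\tfrac{r}{2\sqrt n}\widehat B$ is an extreme point of $\Cor(r,n)$ of rank $r-1$, and since $\tfrac{r}{2\sqrt n}=\sqrt{r/(2(r-1))}$ when $n=\binom r2$, its columns are $-\sqrt{r/(2(r-1))}(e_i-e_j)$. I expect this elliptope-extremality verification for $E_2$ to be the only part requiring genuine thought; everything else is the rank identity $\rank(\Omega)=m+n-\rank(E)$ together with manipulation of the explicit factorizations.
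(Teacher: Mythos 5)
Your proposal matches the paper's own argument essentially step for step: build the dual pair $(E_i,\Omega_i)$, verify the strict-complementarity rank condition and elliptope-extremality of $E_i$, then invoke Theorem~\ref{theoLV} for uniqueness of the extension, Theorem~\ref{theoExCor} for extremality of $C_i=\pi(E_i)$, and Lemma~\ref{lemExVec}(ii) for $\rank(C_i)=\rank(E_i)$. You also correctly isolate the only nontrivial verification in part~(ii), namely that the Gram vectors of $E_2$, after the orthogonal change of coordinates sending $e/\sqrt r$ to $e_r$, satisfy the span condition of Theorem~\ref{theoLiTam}; this is exactly how the paper proceeds.
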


We conclude this section  with explaining how our second construction permits to recover a lower bound of Slofstra \cite{Slofstra11}  for  the amount  of entanglement needed by any optimal quantum strategy for the XOR-game he considers in \cite[Section 7.2]{Slofstra11}.

The goal of an XOR-game is to find a quantum strategy with maximal winning probability, or, equivalently,  a strategy that maximizes the bias of the game. For an introduction to XOR-games we refer to, e.g., \cite{JopThesis,CHTW04}. An XOR-game is given by a game matrix, and the game presented in \cite[Section 7.2]{Slofstra11} has game matrix $\smash{\widehat B}$ as defined above.  An optimal quantum strategy corresponds to an optimal solution of the following optimization problem: 
\begin{equation} \label{slofstraopt1}
\max \{\langle \widehat B, C \rangle: C \in \Cor(m,n)\}.
\end{equation}
Slofstra \cite{Slofstra11} showed (using the notion of `solution algebra' of the game) that  any tensor operator representation of any optimal solution $C$ of (\ref{slofstraopt1}) has local dimension at least $2^{\lfloor (r-1)/2\rfloor}$ (see Section~\ref{secquantumcor} for the definition of a tensor operator representation). As we now point out  this can also be derived from Tsirelson's results using our treatment. 

 For this note first that problem (\ref{slofstraopt1}) is equivalent to 
 \begin{equation} \label{slofstraopt1b}
\min  \{ \langle \widehat B, C \rangle: C \in \Cor(m,n)\}
 \end{equation}
(since $C \in \Cor(m,n)$ if and only if  $-C \in \Cor(m,n)$). 
Problem (\ref{slofstraopt1b}) 
 is in turn equivalent to the following optimization problem over the elliptope:
\begin{equation} \label{slofstraopt}
\min  \{\langle \Omega_2, E \rangle: E \in \mathcal E_{m+n}\}, 
\end{equation}
with $\Omega_2$ being defined as above (since $E\in \mathcal E_{m+n}$ is optimal for (\ref{slofstraopt})  if and only if  $C=\pi(E)\in \Cor(m,n)$ is optimal for 
(\ref{slofstraopt1b})). 
As $\Omega_2$ is positive semidefinite and $\langle \Omega_2,E_2\rangle =0$, it follows that $E_2$ is optimal for (\ref{slofstraopt}) and thus $C_2=\pi(E_2)$ is optimal for (\ref{slofstraopt1b}).
Moreover, as $\rank(E_2)=m+n-\rank(\Omega_2)$ is the largest possible rank of an optimal solution of (\ref{slofstraopt}), it follows from a geometric property of semidefinite programming that $E_2$ must lie in the relative interior of the set of optimal solutions of (\ref{slofstraopt}).  This, combined with the fact that $E_2$ is an extreme point of  $\ME_{m+n}$, implies that $E_2$ is the unique optimal solution of (\ref{slofstraopt}) and thus $C_2$ is the unique optimal solution of (\ref{slofstraopt1b}). Finally, as $C_2$ is an extreme point of $\Cor(m,n)$ with rank $r-1$, we can conclude using Corollary~\ref{cortensordim} below 
that any tensor operator representation of $C_2$ uses local dimension at least $2^{\lfloor (r-1)/2\rfloor}$, and the same holds for the unique optimal solution $-C_2$ of \eqref{slofstraopt1}.

\section{Lower bounding the size of operator representations}
\label{secquantumcor}

We start with recalling, in Theorem~\ref{Tsir}, some  equivalent characterizations for bipartite correlations in terms of operator representations, due to Tsirelson. 
For this consider a matrix $C\in \R^{m\times n}$. We  say that  $C$ admits a \emph{tensor operator representation} if there exist an integer $d$ (the \emph{local dimension}), a unit vector $\psi \in \C^d \otimes \C^d$, and Hermitian $d \times d$ matrices $\{X_s\}_{s=1}^m$ and $\{Y_t\}_{t=1}^n$ with spectra contained in $[-1,1]$, such that $C_{s,t} = \psi^* (X_s \otimes Y_t) \psi$ for all $s$ and $t$. 

Moreover we say that $C$ admits a (finite dimensional) {\em  commuting operator representation} if there exist an integer $d$, a Hermitian positive semidefinite $d \times d$ matrix $W$ with $\mathrm{trace}(W) = 1$, and Hermitian $d \times d$ matrices $\{X_s\}$ and $\{Y_t\}$ with spectra contained in $[-1,1]$,  such that $X_s Y_t = Y_t X_s$ and $C_{s,t} = \Tr(X_s Y_t W)$ for all $s$ and $t$. A commuting operator representation is said to be \emph{pure} if $\rank(W) = 1$.

Existence of these various operator representations relies on using Clifford algebras. For an integer $r\ge 1$  the \emph{Clifford algebra} $\Clifford(r)$ of order $r$ can be defined as the universal $C^*$-algebra with Hermitian generators $a_1,\ldots,a_r$ and relations 
\begin{equation} \label{eqCliffordrelations}
a_i^2 = 1 \quad \text{and} \quad a_ia_j + a_ja_i = 0 \quad \text{for} \quad i \neq j.
\end{equation}
We call these relations the \emph{Clifford relations}. 
To represent the elements of $\Clifford(r)$ by matrices we can use the following map, which is a $*$-isomorphism onto its image:
\begin{equation} \label{eqClifford}
\varphi_r \colon \Clifford(r) \to \C^{2^{\lceil r/2 \rceil} \times 2^{\lceil r/2 \rceil}}, \, \varphi_r(a_i) = \begin{cases}
Z^{\otimes \frac{i-1}{2}} \otimes X \otimes I^{\otimes \lceil \frac{r}{2} \rceil - \frac{i+1}{2}} & \text{for $i$ odd},\\
Z^{\otimes \frac{i-2}{2}} \otimes Y \otimes I^{\otimes \lceil \frac{r}{2}\rceil - \frac{i}{2}} & \text{for $i$ even}.
\end{cases}
\end{equation}
Here we use the \emph{Pauli matrices}
\[
X = \begin{pmatrix} 0 & 1 \\ 1 & 0 \end{pmatrix}, \quad Y = \begin{pmatrix} 0 & -\mathbf{i} \\ \mathbf{i} & 0 \end{pmatrix}, \quad Z = \begin{pmatrix} 1 & 0 \\ 0 & -1 \end{pmatrix}.
\]
For even $r$ the representation $\varphi_r$ is  irreducible and thus $\Clifford(r)$ is isomorphic to the full matrix algebra with matrix size $2^{r/2}$. 
For odd $r$ the representation $\varphi_r$ decomposes as a direct sum of two irreducible representations, each of dimension $2^{\lfloor r/2 \rfloor}$. Therefore, if $X_1,\ldots,X_r$ is a set of Hermitian matrices  satisfying the relations $X_i^2 = I$ and $X_i X_j + X_j X_i = 0$ for $i \neq j$, then they must have size at least $2^{\lfloor r/2 \rfloor}$.
We refer to \cite[Section 5.4]{Procesi} for details about (representations of) Clifford algebras.

\begin{theorem}[\cite{Tsirelson:87}] \label{Tsir}
Let $C \in \R^{m \times n}$. The following statements are equivalent:
\begin{enumerate}
\item $C$ is a bipartite correlation.
\item $C$ admits a tensor operator representation.
\item $C$ admits a pure commuting operator representation. 
\item $C$ admits a commuting operator representation. 
\end{enumerate}
\end{theorem}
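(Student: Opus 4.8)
The plan is to prove the cyclic chain of implications $(1) \Rightarrow (3) \Rightarrow (4) \Rightarrow (2) \Rightarrow (1)$, which suffices since $(3) \Rightarrow (2)$ and $(2) \Rightarrow (4)$ would close the gaps differently; in fact the cleanest route is $(4) \Rightarrow (1)$, $(1) \Rightarrow (3)$, $(3) \Rightarrow (4)$, together with $(1) \Leftrightarrow (2)$ proved separately.

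\medskip\noindent\textbf{$(1) \Rightarrow (3)$: the Clifford algebra construction.} Suppose $C = (\langle x_s, y_t\rangle)$ with unit vectors $x_1,\dots,x_m,y_1,\dots,y_n \in \R^r$. First I would associate to each unit vector a Hermitian contraction via the Clifford algebra: writing $x_s = \sum_{i=1}^r (x_s)_i e_i$, set $X_s = \sum_i (x_s)_i \varphi_r(a_i)$ and similarly $Y_t = \sum_i (y_t)_i \varphi_r(a_i)$, where the $\varphi_r(a_i)$ are the anticommuting Hermitian matrices of \eqref{eqClifford}. Using the Clifford relations \eqref{eqCliffordrelations} one computes $X_s^2 = \|x_s\|^2 I = I$, so $X_s$ is Hermitian with spectrum in $\{-1,+1\} \subseteq [-1,1]$; likewise for $Y_t$. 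The key algebraic identity is $\tfrac12(X_s Y_t + Y_t X_s) = \langle x_s, y_t\rangle I$, which follows by expanding and using $\varphi_r(a_i)\varphi_r(a_j) + \varphi_r(a_j)\varphi_r(a_i) = 2\delta_{ij} I$. To obtain a \emph{commuting} operator representation I would then use the standard tensor doubling trick: work on $\C^{2^{\lceil r/2\rceil}} \otimes \C^{2^{\lceil r/2\rceil}}$, replace $X_s$ by $X_s \otimes I$ and $Y_t$ by $I \otimes \bar Y_t$ (complex conjugate), which now commute, and take $\psi$ to be the maximally entangled state $\tfrac{1}{\sqrt d}\sum_k e_k \otimes e_k$ with $W = \psi\psi^*$. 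The identity $\psi^*(A \otimes \bar B)\psi = \tfrac1d \Tr(A B^\T)$ together with $\tfrac1d\Tr(X_s Y_t) = \langle x_s, y_t\rangle$ (the normalized trace of the Clifford identity, since $\varphi_r(a_i)$ has trace zero for $r$ even, and one handles odd $r$ by passing to an irreducible summand or adding a dummy generator) gives $C_{s,t} = \Tr(X_s Y_t W)$, a pure commuting operator representation.

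\medskip\noindent\textbf{$(3) \Rightarrow (4)$ and $(2) \Rightarrow (4)$:} The first is trivial, since a pure commuting operator representation \emph{is} a commuting operator representation. For $(2) \Rightarrow (4)$: given a tensor operator representation $C_{s,t} = \psi^*(X_s \otimes Y_t)\psi$, set $W = \psi\psi^*$ (rank one, trace one), replace $X_s$ by $\tilde X_s = X_s \otimes I$ and $Y_t$ by $\tilde Y_t = I \otimes Y_t$ on $\C^d \otimes \C^d$; these are Hermitian with spectra still in $[-1,1]$, they commute, and $\Tr(\tilde X_s \tilde Y_t W) = \psi^*(X_s \otimes Y_t)\psi = C_{s,t}$.

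\medskip\noindent\textbf{$(4) \Rightarrow (1)$: extracting unit vectors.} This is the step I expect to be the main obstacle, as it reverses the construction. Let $X_s, Y_t \in \C^{d\times d}$ be commuting Hermitian contractions and $W \succeq 0$ with $\Tr W = 1$, so $C_{s,t} = \Tr(X_s Y_t W) = \langle W^{1/2}, X_s Y_t W^{1/2}\rangle$ in the trace inner product. The idea is to build real unit vectors in a large real Hilbert space. Consider the operators acting on $\C^{d\times d}$ equipped with the inner product $\langle A, B\rangle_W := \Tr(A^* B W)$ restricted to the closure of the cyclic subspace generated by $X_s, Y_t$ acting on the identity; since the $X_s$ commute with the $Y_t$, left-multiplication by $X_s$ and right-multiplication by $Y_t$ are commuting families. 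A cleaner approach following Tsirelson: form the vectors $\xi_s = X_s \otimes I\,|\psi\rangle$ and $\eta_t = I \otimes Y_t^\T\,|\psi\rangle$ with $|\psi\rangle$ a purification of $W$; then $\langle \xi_s, \eta_t\rangle = \psi^*(X_s \otimes Y_t^\T \cdot {}^{\text{conj}})\psi = C_{s,t}$ after the usual identification, $\|\xi_s\| = \|X_s\|_{W} \le 1$ and similarly $\|\eta_t\| \le 1$ (contractions), and passing to the real part of the complex inner product — which preserves the real quantities $C_{s,t}$ — and then rescaling to genuine unit vectors using the observation in the proof of Lemma~\ref{lemExVec} (padding with an extra coordinate $\sqrt{1-\|\cdot\|^2}$) yields a $C$-system. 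So $C \in \Cor(m,n)$.

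\medskip\noindent\textbf{$(1) \Rightarrow (2)$:} This is subsumed in $(1) \Rightarrow (3) \Rightarrow (4)$ combined with a direct argument: actually the Clifford construction above already produces a tensor operator representation directly (the $X_s \otimes I$, $I \otimes \bar Y_t$, $\psi$ maximally entangled), so I would simply record that the same construction proves $(1) \Rightarrow (2)$ without extra work. This closes the cycle and proves all four statements equivalent. The one genuinely delicate point throughout is bookkeeping the spectra-in-$[-1,1]$ condition and the normalization of traces for odd versus even Clifford order, which I would handle by the remark in the excerpt that for odd $r$ one restricts to a $2^{\lfloor r/2\rfloor}$-dimensional irreducible summand.
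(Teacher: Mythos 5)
Your proposal is essentially correct and follows the same route as the paper's proof: use the Clifford algebra together with a maximally entangled state for $(1)\Rightarrow(2)/(3)$, observe $(3)\Rightarrow(4)$ is immediate, and extract approximate unit vectors from the operator representation for $(4)\Rightarrow(1)$, padding to genuine unit vectors with the device from Lemma~\ref{lemExVec}. The paper organizes the implications as the single cycle $(1)\Rightarrow(2)\Rightarrow(3)\Rightarrow(4)\Rightarrow(1)$, whereas you prove a slightly redundant set, but that is just bookkeeping. Your use of $\bar Y_t$ in place of the paper's $\pi(a_i)^\T$ is equivalent because the Clifford generators are Hermitian.

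There is one genuine slip in your ``cleaner'' version of $(4)\Rightarrow(1)$. With $\psi$ a purification of $W$ (so $\mathrm{Tr}_2(\psi\psi^*) = W$), the pairing $\langle (X_s\otimes I)\psi, (I\otimes Y_t^\T)\psi\rangle = \psi^*(X_s\otimes Y_t^\T)\psi$ computes $\mathrm{Tr}(X_s W^{1/2} Y_t W^{1/2})$ (for the standard vectorization purification), which is \emph{not} $\mathrm{Tr}(X_s Y_t W) = C_{s,t}$ unless $W$ commutes with $Y_t$. The correct choice is $\eta_t = (Y_t\otimes I)\psi$, giving $\langle \xi_s, \eta_t\rangle = \psi^*(X_s Y_t \otimes I)\psi = \mathrm{Tr}(X_s Y_t W)$, with $\|\xi_s\|^2 = \mathrm{Tr}(X_s^2 W)\le 1$. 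Your first-sketched GNS-style construction --- the sesquilinear form $\langle A,B\rangle_W = \mathrm{Tr}(A^*BW)$ applied to $\xi_s = X_s$, $\eta_t = Y_t$ --- is correct and in fact amounts to exactly this. The paper instead spectrally decomposes $W = \sum_i \lambda_i \psi_i\psi_i^*$ and builds the vectors directly as $\bigoplus_i \sqrt{\lambda_i}\,(\Re(X_s\psi_i),\Im(X_s\psi_i))$, which avoids purification language altogether; both routes yield the same $C$-system after taking real parts and padding.
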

\begin{proof}
$(1) \Rightarrow (2)$ Let $C \in \Cor(m,n)$. That means there exist unit vectors $\{x_s\}$ and $\{y_t\}$ in $\R^r$, where $r = \rank(C)$, such that $C_{s,t} = \ip{x_s, y_t}$ for all $s$ and $t$. 
Set $d = 2^{\lfloor r/2 \rfloor}$ and define
\begin{equation*}
X_s = \sum_{i=1}^r (x_s)_i \pi(a_i), \quad Y_t = \sum_{i=1}^r (y_t)_i \pi(a_i)^\T,
\end{equation*}
where $\pi$ is an irreducible representation of $\mathcal C(r)$ by matrices of size $d$ (note that for $r$ even we could use the explicit representation $\varphi_r$).
With $\psi = \frac{1}{\sqrt{d}} \sum_{i=1}^d e_i \otimes e_i$ one can use the Clifford relations to derive the following identity (see for example \cite{JopThesis}):
\[
C_{s,t} = \ip{x_s, y_t} = \Tr(X_s Y_t^\T)/d = \psi^* (X_s \otimes Y_t) \psi \quad \text{for all} \quad s\in S, t \in T.
\]
The eigenvalues of the matrices $\pi(a_1),\ldots,\pi(a_r)$ lie in $\{-1,1\}$, and the Clifford relations \eqref{eqCliffordrelations} can be used to derive that the eigenvalues of $X_s$ and $Y_t$ also lie in $\{-1, 1\}$. Thus, $(\{X_s\}, \{Y_t\}, \psi)$ is a tensor operator representation of $C$.

$(2) \Rightarrow (3)$ If $(\{X_s\}, \{Y_t\}, \psi)$ is a tensor operator representation of $C$, then the operators $X_s \otimes I$ and $I \otimes Y_t$ commute, and by using the identity \[\psi^* (X_s \otimes Y_t) \psi = \Tr((X_s \otimes I)(I \otimes Y_t) \psi \psi^*)\]
we see that $(\{X_s \otimes I\}, \{I \otimes Y_y\}, \psi\psi^*)$ is a pure commuting operator representation.

$(3) \Rightarrow (4)$ This is immediate.

$(4) \Rightarrow (1)$
Suppose $(\{X_s\}, \{Y_t\}, W)$ is a commuting operator representation of $C$. Since $W$ is positive semidefinite and has trace $1$, there exist nonnegative scalars $\lambda_i$ and orthonormal unit vectors $\psi_i \in \C^d \otimes \C^d$ such that $W = \sum_i \lambda_i \psi_i\psi_i^*$ and $\sum_i \lambda_i = 1$. Then, 
\[
C_{s,t} = \Tr(X_s Y_t W) = \sum_i \lambda_i \Tr(X_s Y_t \psi_i \psi_i^*) = \sum_i \lambda_i \psi_i^* X_s Y_t \psi_i. 
\]
So, with
\[
x_s = \bigoplus_i \sqrt{\lambda_i} \begin{pmatrix} \Re(X_s \psi_i) \\ \Im(X_s \psi_i) \end{pmatrix} \quad \text{and} \quad y_t = \bigoplus_i \sqrt{\lambda_i} \begin{pmatrix} \Re(Y_t \psi_i) \\ \Im(Y_t \psi_i) \end{pmatrix}
\]
we have $C_{s,t} = \ip{x_s, y_t}$  and $\|x_s\|, \|y_s\| \leq 1$, and by using the observation in the proof of Lemma~\ref{lemExVec} we can extend the vectors $x_s$ and $y_t$ to unit vectors.
\end{proof}

\begin{corollary} \label{remTsi}
If C is a bipartite correlation matrix of rank $r$, then it admits a tensor operator representation in local dimension $2^{\lfloor r/2 \rfloor}$. If $C$ is a bipartite correlation matrix that admits a tensor operator representation in local dimension $d$, then it has a commuting operator representation by matrices of size $d^2$.
\end{corollary}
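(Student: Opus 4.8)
The plan is to read off both statements from the proof of Theorem~\ref{Tsir}, the first from the implication $(1)\Rightarrow(2)$ and the second from $(2)\Rightarrow(3)$, simply by tracking the sizes of the objects constructed there; essentially no new argument is needed.

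For the first statement I would run the construction of $(1)\Rightarrow(2)$ starting from vectors $x_1,\ldots,x_m,y_1,\ldots,y_n\in\R^r$ ($r=\rank(C)$) with $\|x_s\|,\|y_t\|\le 1$ and $C_{s,t}=\ip{x_s,y_t}$. Such vectors exist: given any $C$-system, orthogonally project the $x$-vectors onto the span of the $y$-vectors and then the $y$-vectors onto the span of the resulting $x$-vectors; neither projection alters the cross inner products $\ip{x_s,y_t}$, and the final common span has dimension $\rank(C)=r$. Then set $d=2^{\lfloor r/2\rfloor}$, fix an irreducible representation $\pi$ of $\Clifford(r)$ by $d\times d$ matrices (which exists by the discussion preceding Theorem~\ref{Tsir}), and put $X_s=\sum_{i=1}^r(x_s)_i\,\pi(a_i)$, $Y_t=\sum_{i=1}^r(y_t)_i\,\pi(a_i)^\T$, and $\psi=\frac{1}{\sqrt d}\sum_{i=1}^d e_i\otimes e_i\in\C^d\otimes\C^d$. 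The Clifford relations give $X_s^2=\|x_s\|^2 I\preceq I$ and $Y_t^2=\|y_t\|^2 I\preceq I$, so the spectra of $X_s$ and $Y_t$ lie in $[-1,1]$, and the identity $C_{s,t}=\psi^*(X_s\otimes Y_t)\psi$ is verified exactly as in the proof of Theorem~\ref{Tsir}. Since everything lives in local dimension $d=2^{\lfloor r/2\rfloor}$, the first claim follows.

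For the second statement I would take a tensor operator representation $(\{X_s\},\{Y_t\},\psi)$ of $C$ with $\psi\in\C^d\otimes\C^d$ and apply the construction of $(2)\Rightarrow(3)$: the matrices $\widetilde X_s=X_s\otimes I$ and $\widetilde Y_t=I\otimes Y_t$ are Hermitian $d^2\times d^2$ matrices whose spectra still lie in $[-1,1]$, they pairwise commute, and with $W=\psi\psi^*$ (a Hermitian positive semidefinite $d^2\times d^2$ matrix of trace~$1$) one has $C_{s,t}=\psi^*(X_s\otimes Y_t)\psi=\Tr(\widetilde X_s\widetilde Y_tW)$. Hence $(\{\widetilde X_s\},\{\widetilde Y_t\},W)$ is a commuting operator representation of $C$ by matrices of size $d^2$.

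I do not expect a genuine obstacle, since this is bookkeeping on Theorem~\ref{Tsir}. The only point requiring care is in the first part: to land in local dimension exactly $2^{\lfloor r/2\rfloor}$ one must allow the vectors $x_s,y_t\in\R^r$ to have norm at most one (a $C$-system consisting of unit vectors in $\R^{\rank(C)}$ need not exist, as the $1\times1$ example $C=(1/2)$ shows), and it is precisely $X_s^2=\|x_s\|^2 I\preceq I$ that keeps the spectra inside $[-1,1]$; one should also recall, for odd $r$, that $\Clifford(r)$ genuinely has an irreducible representation of dimension $2^{\lfloor r/2\rfloor}$ (one of the two summands of $\varphi_r$), as noted before Theorem~\ref{Tsir}.
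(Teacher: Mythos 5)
Your proposal is correct and follows the paper's intended route of reading off the two statements from the proof of Theorem~\ref{Tsir} ($(1)\Rightarrow(2)$ for the first claim, $(2)\Rightarrow(3)$ for the second). You also correctly spot and repair a genuine imprecision in the paper's argument for $(1)\Rightarrow(2)$: the paper asserts that $C\in\Cor(m,n)$ gives unit vectors forming a $C$-system already in $\R^{r}$ with $r=\rank(C)$, which is false in general (your $1\times 1$ example $C=(1/2)$ witnesses this — unit vectors in $\R^1$ are $\pm 1$). Your fix is exactly right: project twice so that the vectors land in an $r$-dimensional space with $\|x_s\|,\|y_t\|\le 1$ (this preserves the cross inner products and yields common span of dimension $\rank(C)$), and then observe that the Clifford construction still produces operators with spectra in $[-1,1]$ because $X_s^2=\|x_s\|^2 I\preceq I$. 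This matches the observation in the paper's own proof of Lemma~\ref{lemExVec} (and of $(4)\Rightarrow(1)$) that vectors of norm $\le 1$ suffice to define a bipartite correlation; a more careful statement of the theorem's proof would have used the same convention. The second part is exactly the paper's construction and needs no further comment.
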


The remainder of this section is devoted to showing that there are bipartite correlation matrices for which every operator representation requires a large dimension. 

For this we need two more definitions. A commuting operator representation $(\{X_s\}, \{Y_t\} ,W)$ is \emph{nondegenerate} if there does not exist a projection matrix $P \neq I$ such that $PW\mkern-3.1muP = W$, $X_s P = PX_s$, and $Y_t P = P Y_t$ for all $s$ and $t$. 
It  is said to be \emph{Clifford} if there exist matrices $Q \in \R^{m \times m}$ and $R \in \R^{n \times n}$ with all-ones diagonals, such that
\begin{align*}
X_s X_{s'} + X_{s'} X_s &= 2 Q_{s,s'} I \quad \text{for all} \quad s,s' \in S,\\
Y_t Y_{t'} + Y_{t'} Y_t &= 2R_{t,t'} I \quad \text{for all} \quad t,t' \in T.
\end{align*}
We will use the following theorem from Tsirelson as crucial ingredient.
\begin{theorem} [{\cite[Theorem 3.1]{Tsirelson:87}}] \label{Thrm3.1}
If $C$ is an extreme point of $\Cor(m,n)$, then any nondegenerate commuting operator representation of $C$ is Clifford. 
\end{theorem}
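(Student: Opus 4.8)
\medskip\noindent
The plan is to follow Tsirelson's strategy and extract, from the extremality of $C$, enough rigidity to force the anticommutators of the operators on each side to be scalars. Write the given representation as $(\{X_s\},\{Y_t\},W)$, acting on $H=\C^d$. By Lemma~\ref{lemExVec}(ii) the extreme point $C$ has a \emph{unique} extension $E\in\ME_{m+n}$, realized as the Gram matrix of a $C$-system $x_1,\dots,x_m,y_1,\dots,y_n$ spanning $\R^r$ with $r=\rank(C)$; consequently \emph{every} $C$-system of $C$ has Gram matrix $E$, since its Gram matrix lies in $\fib(C)=\{E\}$. The Clifford representation built from this $C$-system in the proof of implication $(1)\Rightarrow(2)$ of Theorem~\ref{Tsir} has $X_s^2=I$, $Y_t^2=I$ and $X_sX_{s'}+X_{s'}X_s=2\ip{x_s, x_{s'}}I=2E_{s,s'}I$, and likewise on the $y$-side. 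So the goal is to prove that the \emph{given} operators satisfy $X_s^2=I$, $Y_t^2=I$ and
\[
X_sX_{s'}+X_{s'}X_s=2E_{s,s'}I\ \ (s,s'\in S),\qquad Y_tY_{t'}+Y_{t'}Y_t=2E_{m+t,m+t'}I\ \ (t,t'\in T);
\]
taking $Q$ (resp.\ $R$) to be the principal submatrix of $E$ on the indices in $S$ (resp.\ in $T$), which has all-ones diagonal, this exhibits the representation as Clifford. I would also restate nondegeneracy in a more usable form: the orthogonal projection $P_0$ onto the smallest subspace of $H$ that contains $\mathrm{range}(W)$ and is invariant under all the $X_s$ and all the $Y_t$ satisfies $P_0WP_0=W$, $X_sP_0=P_0X_s$ and $Y_tP_0=P_0Y_t$; hence $P_0=I$, that is, words in the $X_s$ and $Y_t$ applied to $\mathrm{range}(W)$ span all of $H$.

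\medskip\noindent
The engine of the argument is a perturbation principle. Fix $s_0$ and take any Hermitian $N$ commuting with every $Y_t$ such that $X_{s_0}\pm\epsilon N$ has spectrum in $[-1,1]$ for some $\epsilon>0$; replacing $X_{s_0}$ by $X_{s_0}\pm\epsilon N$ and leaving the other operators and $W$ unchanged yields commuting operator representations of matrices $C^{\pm}$ with $C^{\pm}_{s_0,t}=C_{s_0,t}\pm\epsilon\Tr(NY_tW)$ and $C^{\pm}_{s,t}=C_{s,t}$ for $s\neq s_0$. Since $C=\tfrac12(C^{+}+C^{-})$ and $C$ is an extreme point of $\Cor(m,n)$, we get $C^{+}=C^{-}=C$, hence $\Tr(NY_tW)=0$ for all $t$; the symmetric statement holds with the two sides interchanged. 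The first use of this is with $N$ a spectral projection of $X_{s_0}$ supported away from $\pm1$ (such $N$ automatically commutes with all $Y_t$, since they commute with $X_{s_0}$), and more generally with $N=f(X_{s_0})$ for bounded $f$ vanishing at $\pm1$; propagating the resulting vanishing statements along words in the operators by means of nondegeneracy, one concludes that the spectral projection of each $X_s$ onto $(-1,1)$ vanishes, i.e.\ $X_s^2=I$, and symmetrically $Y_t^2=I$.

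\medskip\noindent
With $X_s^2=I$ and $Y_t^2=I$ established, put $q_{s,s'}=\tfrac12(X_sX_{s'}+X_{s'}X_s)$; a short computation using $X_s^2=X_{s'}^2=I$ shows that $q_{s,s'}$ commutes with $X_s$, with $X_{s'}$, and with every $Y_t$. I would then run the perturbation engine using reflections $N$ (that is, $N^2=I$) that anticommute with some $X_{s_0}$ and commute with all $Y_t$: for such $N$ the operators $(\cos\epsilon)X_{s_0}+(\sin\epsilon)N$ are again symmetries, hence admissible perturbations, and combining the identities $\Tr(NY_tW)=0$ with the centralizing properties of the $q_{s,s'}$ and with nondegeneracy forces each $q_{s,s'}$ to be a scalar operator, $q_{s,s'}=c_{s,s'}I$; symmetrically $\tfrac12(Y_tY_{t'}+Y_{t'}Y_t)=d_{t,t'}I$. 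Finally, the $C$-system attached to the representation as in the proof of implication $(4)\Rightarrow(1)$ of Theorem~\ref{Tsir} now needs no renormalization (because $X_s^2=I$, $Y_t^2=I$), and its Gram matrix, whose $S$-block equals $(\Tr(q_{s,s'}W))_{s,s'}=(c_{s,s'})_{s,s'}$ and whose $T$-block equals $(d_{t,t'})_{t,t'}$, is the unique extension $E$; hence $c_{s,s'}=E_{s,s'}$ and $d_{t,t'}=E_{m+t,m+t'}$, completing the proof.

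\medskip\noindent
The main obstacle will be the two ``propagation'' steps — the passage to $X_s^2=I$ and, especially, the passage to $q_{s,s'}$ being scalar. Extremality of $C$ only \emph{directly} bounds the expectations $\Tr(NY_tW)$ of the perturbing operators in the state $W$, so one must on the one hand supply a rich enough family of admissible directions $N$ — Hermitian, commuting with all $Y_t$, and keeping $X_{s_0}$ a contraction (or a symmetry) — and on the other hand invoke the cyclicity of $\mathrm{range}(W)$, which is the content of nondegeneracy, to upgrade these numerical identities to operator identities on all of $H$. The anticommutator step is the more delicate one, since one must rule out $q_{s,s'}$ being a non-scalar \emph{central} element of the algebra generated by the operators, and it is precisely here that extremality of $C$ itself — not merely of the correlations produced along the way — is essential.
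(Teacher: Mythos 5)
The paper does not actually prove this theorem: it is stated with a citation to Tsirelson~\cite[Theorem~3.1]{Tsirelson:87} and used as a black box, so there is no proof in the paper to compare against. Judged on its own terms, your sketch has the right broad shape — perturb the representation, invoke extremality of $C$ to force the perturbed correlation to equal $C$, then try to upgrade the resulting numerical identities to operator identities via nondegeneracy — but both ``propagation'' steps, which you flag as the hard parts, have genuine gaps that are not merely routine to fill.

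For the step $X_s^2=I$: extremality (via Lemma~\ref{lemExVec}) does give $\psi^*X_s^2\psi=1$, hence $X_s^2\psi=\psi$, and nondegeneracy says $\psi$ is cyclic for the algebra generated by the $X_s$ and $Y_t$. But the operator $I-X_s^2$ commutes with the $Y_t$'s and with $X_s$ itself, \emph{not} with the other $X_{s'}$'s; so from $(I-X_s^2)\psi=0$ you can propagate to $(I-X_s^2)\,q(Y)X_s^k\psi=0$ but not to $(I-X_s^2)\,X_{s'}\psi=0$, and ``propagating along words'' stalls immediately. The second step is worse. Once $X_s^2=I$ is granted, $X_s$ is an extreme point of the set of Hermitian contractions, and for any Hermitian $N\neq 0$ one has $(X_s\pm\epsilon N)^2=I\pm\epsilon(X_sN+NX_s)+\epsilon^2N^2$; even for $N$ anticommuting with $X_s$ this equals $I+\epsilon^2N^2\succ I$, so $X_s\pm\epsilon N$ is \emph{not} a contraction for any $\epsilon>0$. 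Thus the linear $\pm\epsilon$ perturbation engine is dead precisely when you most need it. The nonlinear replacement $\cos\epsilon\,X_s+\sin\epsilon\,N$ you propose is indeed a symmetry, but averaging the $\pm\epsilon$ instances gives $\cos\epsilon\cdot C\neq C$, so the extremality hypothesis never bites and you obtain no constraint on $\Tr(NY_tW)$. Consequently the argument that the anticommutators $\tfrac12(X_sX_{s'}+X_{s'}X_s)$ are scalar — which you correctly identify as the crux, since one must rule out non-scalar central elements — is not just incomplete but, as currently organized, does not produce the needed exact convex decomposition of $C$ at all. A correct proof needs either a different mechanism for extracting operator identities (e.g.\ exploiting the uniqueness of the Gram matrix $E$ together with the commutant structure more directly, as Tsirelson does) or a genuinely different class of perturbations.
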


We can now state and prove the main result of this section. 

\begin{theorem} \label{RankRep}
Let $C$ be an extreme point of $\Cor(m,n)$ and let $r = \rank(C)$. Every commuting operator representation of $C$ uses matrices of size at least $(2^{\lfloor r/2 \rfloor})^2$.
\end{theorem}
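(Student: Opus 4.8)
The plan is to reduce to nondegenerate commuting operator representations, use Tsirelson's structure theorem (Theorem~\ref{Thrm3.1}) to extract from such a representation a pair of \emph{commuting} Clifford systems of order $r=\rank(C)$ acting on $\C^{d}$, and then read off the bound from the representation theory of $\Clifford(r)\otimes\Clifford(r)$.

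First I would observe that it suffices to prove the bound for nondegenerate representations. Given a degenerate one $(\{X_s\},\{Y_t\},W)$ on $\C^{d}$, with a projection $P\ne I$ such that $PWP=W$, $X_sP=PX_s$ and $Y_tP=PY_t$ for all $s,t$, the operators $X_s,Y_t$ commute with $P$, hence restrict to Hermitian operators with spectra in $[-1,1]$ on the range $V$ of $P$; since $W\succeq0$ and $PWP=W$, the range of $W$ lies in $V$, so $W$ restricts to a positive semidefinite trace-one operator on $V$; and a short computation gives $C_{s,t}=\Tr(X_s'Y_t'W')$ for the restrictions $X_s',Y_t',W'$ to $V$. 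This is a commuting operator representation of $C$ of strictly smaller size, so iterating reduces any representation to a nondegenerate one of no larger size.

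So assume the representation is nondegenerate. By Theorem~\ref{Thrm3.1} it is Clifford, which gives symmetric matrices $Q\in\R^{m\times m}$, $R\in\R^{n\times n}$ with all-ones diagonals such that $X_sX_{s'}+X_{s'}X_s=2Q_{s,s'}I$ and $Y_tY_{t'}+Y_{t'}Y_t=2R_{t,t'}I$. Writing $X(v)=\sum_s v_sX_s$ for $v\in\R^m$, one gets $X(v)^2=(v^{\T}Qv)\,I$; since $X(v)$ is Hermitian this forces $Q\succeq0$ and $X(v)=0$ whenever $v\in\ker Q$, so the linear map $v\mapsto X(v)$ has rank at most $\rank Q$. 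As $v\mapsto C^{\T}v$ equals the composition of $v\mapsto X(v)$ with the linear map $A\mapsto(\Tr(AY_tW))_{t}$, it follows that $\rank C\le\rank Q$, and symmetrically $R\succeq0$ and $\rank C\le\rank R$; hence $\rank Q,\rank R\ge r$. Now write $Q=A^{\T}A$ with $A$ of full row rank $\rho=\rank Q\ge r$, pick $u_1,\dots,u_r\in\R^m$ that $A$ maps to an orthonormal system in $\R^{\rho}$ (possible since $\rho\ge r$ and $A$ is surjective), and set $\widetilde X_i=X(u_i)$; these are Hermitian and satisfy $\widetilde X_i\widetilde X_j+\widetilde X_j\widetilde X_i=2(u_i^{\T}Qu_j)I=2\delta_{ij}I$, i.e.\ the Clifford relations~\eqref{eqCliffordrelations}. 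The same construction from $R$ produces Hermitian $\widetilde Y_1,\dots,\widetilde Y_r$ obeying the Clifford relations, and $\widetilde X_i\widetilde Y_j=\widetilde Y_j\widetilde X_i$ for all $i,j$ since every $X_s$ commutes with every $Y_t$.

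To conclude, I would invoke the universal property of $\Clifford(r)$ twice, together with the fact that the two matrix families $\{\widetilde X_i\}$ and $\{\widetilde Y_j\}$ commute elementwise, to obtain a unital $*$-homomorphism $\Clifford(r)\otimes\Clifford(r)\to\C^{d\times d}$ with $a_i\otimes1\mapsto\widetilde X_i$ and $1\otimes a_i\mapsto\widetilde Y_i$. Since $\Clifford(r)$ is finite-dimensional and semisimple and, by the remark following~\eqref{eqClifford}, every one of its irreducible representations has dimension at least $2^{\lfloor r/2\rfloor}$, the algebra $\Clifford(r)\otimes\Clifford(r)$ is again finite-dimensional and semisimple and all of its irreducible representations have dimension at least $(2^{\lfloor r/2\rfloor})^{2}$. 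Regarding $\C^{d}$ as a nonzero finite-dimensional module over $\Clifford(r)\otimes\Clifford(r)$ via the above homomorphism, it contains an irreducible submodule of dimension at least $(2^{\lfloor r/2\rfloor})^{2}$, so $d\ge(2^{\lfloor r/2\rfloor})^{2}$. I expect the reduction to nondegenerate representations and the estimates $\rank Q,\rank R\ge r$ to be routine; the main obstacle is this last step — turning the existence of two \emph{commuting} Clifford systems of order $r$ on $\C^{d}$ into the bound $d\ge(2^{\lfloor r/2\rfloor})^{2}$ rather than merely $d\ge2^{\lfloor r/2\rfloor}$ — which is exactly what the representation theory of the tensor-product algebra $\Clifford(r)\otimes\Clifford(r)$ supplies.
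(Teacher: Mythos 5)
Your proof is correct and follows the same overall skeleton as the paper's: reduce to a nondegenerate representation, apply Theorem~\ref{Thrm3.1} to obtain the Clifford structure with matrices $Q$ and $R$, extract $r$ anticommuting Hermitian generators from each of $\{X_s\}$ and $\{Y_t\}$, and conclude via the universal property of the tensor product together with the lower bound $2^{\lfloor r/2\rfloor}$ on irreducible representations of $\mathcal{C}(r)$. Where you genuinely diverge is in how you lower-bound $\rank Q$ and $\rank R$. The paper first passes (again using extremality of $C$) to a \emph{pure} representation $W=\psi\psi^*$, shows that $x_s=\big(\Re(X_s\psi),\Im(X_s\psi)\big)$ and $y_t$ form a $C$-system, identifies their Gram matrix with the unique elliptope extension of $C$ from Theorem~\ref{theoExCor}, and reads off $\rank Q=\rank R=r$ from Lemma~\ref{lemExVec}. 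You bypass the purity reduction and the elliptope machinery entirely: from $X(v)^2=(v^\T Q v)I$ you get $Q\succeq0$ and that $v\mapsto X(v)$ annihilates $\ker Q$; since $v\mapsto C^\T v$ factors through $v\mapsto X(v)$, you obtain $\rank Q\ge\rank C=r$, which is all that is needed to extract $r$ anticommuting generators via a factorization $Q=A^\T A$. Your route is more self-contained (no appeal to purity or uniqueness of extensions), at the cost of obtaining only the inequality $\rank Q\ge r$ rather than the equality the paper derives; both suffice. One minor point worth making explicit: the nondegeneracy reduction may need to be iterated (and the paper's pure-then-nondegenerate reduction likewise), but since each step strictly decreases dimension this terminates, so the argument is sound.
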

\begin{proof}
Let $(\{X_s\}, \{Y_t\}, W)$ be a commuting operator representation of $C$ where $X_s, Y_t$ and $W$ are matrices of size $d$. We will show  $d\ge (2^{\lfloor r/2 \rfloor})^2$. If this representation is degenerate, then there exists a projection matrix $P \neq I$ such that $PW\mkern-3.1muP = W$, $X_sP = PX_s$, and $Y_tP = PY_t$ for all $s$ and $t$. Let $P = \sum_{i=1}^k v_i v_i^*$ be its spectral decomposition, where the vectors $v_1,\ldots,v_k$ are orthonormal, and set $U = (v_1,\ldots,v_k)$. Then one can verify that $(\{ U^* X_s U \}, \{ U^* Y_s U \}, U^* W U)$ is a commuting operator representation of $C$ of smaller dimension. So, since we are proving a lower bound on the dimension, we may assume $(\{X_s\}, \{Y_t\}, W)$ to be a nondegenerate commuting operator representation.

By extremality of $C$ we may assume the operator representation is pure. Hence, there is a unit vector $\psi$ such that $W = \psi \psi^*$. This gives
\[
C_{s,t} = \Tr(X_s Y_t W) = \psi^* X_sY_t \psi = \ip{x_s, y_t},
\]
where
\[
x_s = \begin{pmatrix} \Re(X_s \psi) \\ \Im(X_s \psi) \end{pmatrix} \quad \text{and} \quad y_t = \begin{pmatrix} \Re(Y_t \psi) \\ \Im(Y_t \psi) \end{pmatrix}.
\]
These vectors $x_s$ and $y_t$ are unit vectors because $C$ is extreme (see the proof of Lemma~\ref{lemExVec}), and therefore, they form a $C$-system. 

By Theorem~\ref{Thrm3.1} the commuting operator representation $(\{X_s\}, \{Y_t\}, W)$ is Clifford. So, there exist matrices
$Q \in \R^{m \times m}$ and $R \in \R^{n \times n}$ with all-one diagonals such that
\begin{align*}
X_s X_{s'} + X_{s'} X_s &= 2Q_{s,s'} I \quad \text{for all} \quad s,s' \in S,\\
Y_t Y_{t'} + Y_{t'} Y_t &= 2R_{t,t'} I \quad \text{for all} \quad t,t' \in T.
\end{align*}
We show that $E$ is an extension to the elliptope of $C$, where 
\[
E = \begin{pmatrix} Q & C \\ C^\T & R \end{pmatrix} .
\]
For this, we have to show $Q_{s,s'} = \ip{x_s,x_{s'}}$ and $R_{t,t'} = \ip{y_t,y_{t'}}$. Indeed,
\begin{align*}
\ip{x_s,x_{s'}} + \ip{x_{s'},x_s} &= \Re\big(\psi^*X_s X_{s'} \psi + \psi^* X_{s'} X_s \psi\big) \\
&= \Re\big(\psi^* (X_s X_{s'} + X_{s'} X_s) \psi\big) \\
&= \Re\big(\psi^* ( 2Q_{s,s'} I ) \psi\big) = 2Q_{s,s'},
\end{align*}
and in the same way $\ip{y_t,y_{t'}} + \ip{y_{t'},y_t} = 2 R_{t,t'}$.

By Theorem \ref{theoExCor} the matrix $E$ is the unique extension of $C$ to the elliptope. Furthermore, Lemma~\ref{lemExVec} tells us that $\mathrm{rank}(Q) = \mathrm{rank}(R) = \mathrm{rank}(C) = r$. 

Consider the spectral decomposition $Q = \sum_{i=1}^r \alpha_i v_i v_i^*$, where the vectors $v_1,\ldots,v_r$ are orthonormal, and consider the algebra $\C\langle A_1,\ldots,A_r \rangle$, where
\[
A_i = \frac{1}{\sqrt{\alpha_i}} \sum_{s = 1}^m (v_i)_s X_s \quad \text{for} \quad i \in [r].
\]
We have 
\begin{align*}
A_i A_j + A_j A_i &= \frac{1}{\sqrt{\alpha_i\alpha_j}} \sum_{s,s' = 1}^m \left((v_i)_s (v_j)_{s'} X_s X_{s'} + (v_j)_s (v_i)_{s'} X_s X_{s'}\right)\\
&= \frac{1}{\sqrt{\alpha_i\alpha_j}} \sum_{s,s' = 1}^m (v_i)_s (v_j)_{s'} \left(X_s X_{s'} + X_{s'} X_s \right)\\
&= \frac{1}{\sqrt{\alpha_i\alpha_j}} \sum_{s,s' = 1}^m (v_i)_s (v_j)_{s'} 2Q_{s,s'} I 
= \frac{2}{\sqrt{\alpha_i\alpha_j}}  v_i^* Q v_j I = 2 \delta_{i,j} I,
\end{align*}
which means that we have the representation $\pi_A \colon \mathcal C(r) \to \C\langle A_1,\ldots,A_r \rangle$ defined by $\pi_A(a_i) = A_i$, where the $a_i$ are the generators of $\mathcal C(r)$. In the same way we can define matrices $B_1,\ldots,B_r$ by taking linear combinations of the matrices $Y_t$ so that we obtain the representation $\pi_B \colon \mathcal C(r) \to \C\langle B_1,\ldots, B_r \rangle$ defined by $\pi_B(a_i) = B_i$.

By assumption, the algebras $\C\langle X_1,\ldots,X_m \rangle$ and $\C\langle Y_1,\ldots, Y_n \rangle$ commute. This implies that the  algebras $\C\langle A_1,\ldots,A_r \rangle$ and $\C\langle B_1,\ldots,B_r \rangle$ also commute and that
$
\C\langle A_1,\ldots,A_r \rangle \C\langle B_1,\ldots,B_r \rangle
$
is an algebra. Moreover, we have 
\[
[\pi_A(a), \pi_B(b)] = \pi_A(a)\pi_B(b) - \pi_A(a)\pi_B(b) = 0 \quad \text{for all} \quad a,b \in \mathcal C(r).
\]
By the universal property of the tensor product of algebras (see, e.g., \cite[Proposition II.4.1]{Kas}), there exists a (unique) algebra homomorphism 
 $$\pi: \mathcal C(r) \otimes \mathcal C(r)\to  \C\langle A_1,\ldots,A_r \rangle \C\langle B_1,\ldots,B_r \rangle$$ such that 
 $\pi(a\otimes 1)=\pi_A(a)$ and $\pi(1\otimes a)=\pi_B(a)$ for all $a\in \mathcal C(r)$.
Moreover, each finite dimensional, irreducible representation of a tensor product of algebras is the tensor product of two irreducible representations of those algebras (see, e.g., \cite[Remark 2.27]{EGHLSVY11}). This means that each irreducible representation of $\mathcal C(r) \otimes \mathcal C(r)$ is the tensor product of two irreducible representations of $\mathcal C(r)$. Since irreducible representations of $\mathcal C(r)$ have size at least $2^{\lfloor r/2 \rfloor}$, it follows that irreducible representations of the tensor product $\mathcal{C}(r) \otimes \mathcal{C}(r)$ must have size at least $(2^{\lfloor r/2 \rfloor})^2$. Since $\pi$ is a representation of $\mathcal C(r) \otimes \mathcal C(r)$, this means that the matrices $A_i$ and $B_j$ must have size at least $(2^{\lfloor r/2 \rfloor})^2$, which shows $d \geq (2^{\lfloor r/2 \rfloor})^2$.
\end{proof}

\begin{corollary}\label{cortensordim}
Let $C$ be an extreme point of $\Cor(m,n)$ and let $r = \rank(C)$.  The minimum local dimension of a tensor operator representation of $C$ is $2^{\lfloor r/2 \rfloor}$.
\end{corollary}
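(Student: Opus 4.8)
The plan is to derive Corollary~\ref{cortensordim} as a quick consequence of Theorem~\ref{RankRep} together with the constructive direction of Corollary~\ref{remTsi}. The statement has two halves: an upper bound and a matching lower bound on the minimum local dimension $d$ of a tensor operator representation of an extreme point $C$ of $\Cor(m,n)$ with $r = \rank(C)$.

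For the upper bound, I would simply invoke the first assertion of Corollary~\ref{remTsi}: every bipartite correlation matrix of rank $r$ admits a tensor operator representation in local dimension $2^{\lfloor r/2 \rfloor}$, using the irreducible representation of the Clifford algebra $\mathcal C(r)$ as in the proof of $(1)\Rightarrow(2)$ in Theorem~\ref{Tsir}. In particular this applies to our extreme point $C$, so the minimum local dimension is at most $2^{\lfloor r/2\rfloor}$.

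For the lower bound, suppose $C$ has a tensor operator representation $(\{X_s\},\{Y_t\},\psi)$ in local dimension $d$, i.e. with $X_s, Y_t \in \C^{d\times d}$ and $\psi \in \C^d\otimes\C^d$. By the second assertion of Corollary~\ref{remTsi} (the implication $(2)\Rightarrow(3)$ in Theorem~\ref{Tsir}, passing to $X_s\otimes I$, $I\otimes Y_t$ and $W = \psi\psi^*$), this yields a commuting operator representation of $C$ by matrices of size $d^2$. Since $C$ is an extreme point of $\Cor(m,n)$, Theorem~\ref{RankRep} applies and gives $d^2 \ge (2^{\lfloor r/2\rfloor})^2$, hence $d \ge 2^{\lfloor r/2\rfloor}$. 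Combining the two bounds gives that the minimum local dimension equals exactly $2^{\lfloor r/2 \rfloor}$.

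There is essentially no obstacle here: the corollary is a bookkeeping combination of Theorem~\ref{RankRep} and Corollary~\ref{remTsi}, and the only point to be slightly careful about is that the size-$d^2$ commuting operator representation obtained from a size-$d$ tensor operator representation still satisfies the hypotheses of Theorem~\ref{RankRep} (it does, since Theorem~\ref{RankRep} applies to \emph{any} commuting operator representation of an extreme $C$, with no nondegeneracy assumed). So the heart of the matter — the Clifford-algebra dimension count — has already been carried out in Theorem~\ref{RankRep}, and this corollary merely translates it back into the language of tensor operator representations.
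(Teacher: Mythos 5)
Your proposal is correct and follows exactly the route the paper takes: the paper's proof is the one-liner that the corollary ``follows directly from Corollary~\ref{remTsi} and Theorem~\ref{RankRep},'' and your write-up is precisely the unpacking of that statement (upper bound from the first part of Corollary~\ref{remTsi}, lower bound from the second part together with Theorem~\ref{RankRep}).
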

\begin{proof}
The proof follows directly from Corollary~\ref{remTsi} and Theorem~\ref{RankRep}.
\end{proof}

\section{Matrices with high completely positive semidefinite rank}
\label{secfinal} 

In this section we  prove our main result and construct completely positive semidefinite matrices with exponentially large $\cpsd$. In order to do so we are going to use an additional link between bipartite correlations and quantum correlations, combined with the fact that quantum correlations arise as projections of completely positive semidefinite matrices. We start with recalling the facts that we need about quantum correlations.

Let $A$, $B$, $S$, and $T$ be finite sets. A function $p \colon A \times B \times S \times T \to [0,1]$ is called a \emph{quantum correlation}, realizable in {\em local dimension} $d$, if there exist a unit vector $\psi \in \C^d \otimes \C^d$ and Hermitian positive semidefinite $d \times d$ matrices $X_s^a$ ($s\in S$, $a\in A$) and $Y_t^b$ ($t\in T$, $b\in B$) satisfying the following two conditions:
\begin{equation}\label{eqqc1}
\sum_{a \in A} X_s^a = \sum_{b \in B} Y_t^b = I \quad \text{for all} \quad s \in S, t \in T,
\end{equation}
\begin{equation}\label{rqqc2}
p(a,b|s,t) = \psi^* (X_s^a \otimes Y_t^b) \psi \quad \text{for all} \quad a \in A, b \in B, s \in S, t \in T.
\end{equation}

The next theorem shows a link between quantum correlations and $\CSP$-matrices. This result can be found in \cite[Theorem 3.2]{Antonios:2015} (see also \cite{MR14}). This link allows us  to construct $\CSP$-matrices with large complex completely positive semidefinite rank by finding quantum correlations that cannot be realized in a small local dimension.

\begin{theorem} \label{thrm1}
A function $p \colon A \times B \times S \times T \to [0,1]$ is a quantum correlation that can be realized in local dimension $d$ if and only if there exists a completely positive semidefinite matrix $M$, with rows and columns indexed by the disjoint union $(A \times S) \sqcup (B \times T)$,
satisfying the following conditions:
\begin{equation} \label{eqrank}
\hcpsd(M) \leq d,
\end{equation} 
\begin{equation} \label{eqMprob}
M_{(a,s), (b,t)} = p(a,b|s,t) \quad \text{for all} \quad a\in A, b \in B, s \in S, t \in  T,
\end{equation}
and
\begin{equation} \label{eqMsums}
\sum_{a \in A, b \in B} M_{(a,s),(b,t)} = \sum_{a,a' \in A} M_{(a,s),(a',s')} = \sum_{b,b' \in B} M_{(b,t),(b',t')} = 1
\end{equation}
for all $s,s' \in S$ and $t,t'\in T$.
\end{theorem}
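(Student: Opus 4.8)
The plan is to prove both implications by translating between a Gram representation of $M$ by Hermitian positive semidefinite $d\times d$ matrices and a tensor representation of $p$ in local dimension $d$, using the matrix picture of the shared state. The structural fact underlying both directions is a rigidity consequence of the marginal constraints~\eqref{eqMsums}: if $M=\Gram\big(\{R_{(a,s)}\}_{(a,s)\in A\times S},\,\{R_{(b,t)}\}_{(b,t)\in B\times T}\big)$ with all $R$'s Hermitian positive semidefinite of size $d$, and we put $P_s:=\sum_{a\in A}R_{(a,s)}$ and $Q_t:=\sum_{b\in B}R_{(b,t)}$, then~\eqref{eqMsums} reads $\langle P_s,P_{s'}\rangle=\langle Q_t,Q_{t'}\rangle=\langle P_s,Q_t\rangle=1$ for all indices. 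Each $P_s,Q_t$ is then psd of Frobenius norm $1$, so the equality case of Cauchy--Schwarz forces $P_s=P_{s'}=Q_t=Q_{t'}=:P$ for a single psd matrix $P$ with $\Tr(P^2)=1$; expanding $\langle P_s,P_s\rangle=1$ into a sum of nonnegative terms also gives $\|R_{(a,s)}\|\le 1$, so all entries of $M$ automatically lie in $[0,1]$.

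For ``$\Leftarrow$'', start from such a representation. Assume first that $P$ is invertible (otherwise every $R$ satisfies $R\preceq P$, hence is supported on $\operatorname{range}(P)$, so one runs the argument there and pads the measurements back up to size $d$ at the end). Put
\[
X_s^a := P^{-1/2}R_{(a,s)}P^{-1/2}, \qquad Y_t^b := \overline{P^{-1/2}R_{(b,t)}P^{-1/2}},
\]
which are Hermitian positive semidefinite with $\sum_a X_s^a=\sum_b Y_t^b=I$, and let $\psi\in\C^d\otimes\C^d$ be the vectorization of $P$, a unit vector since $\|P\|_{\mathrm F}=1$. Under the identification $\psi\leftrightarrow\Psi$ for which $(X\otimes Y)\psi$ corresponds to $X\Psi Y^{\T}$ and $\psi^*(X\otimes Y)\psi=\Tr(\Psi^*X\Psi Y^{\T})$, one has $\Psi=P$ and $(Y_t^b)^{\T}=P^{-1/2}R_{(b,t)}P^{-1/2}$, so a one-line trace computation gives $\psi^*(X_s^a\otimes Y_t^b)\psi=\Tr(R_{(a,s)}R_{(b,t)})=M_{(a,s),(b,t)}=p(a,b|s,t)$, realizing $p$ in local dimension $d$.

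For ``$\Rightarrow$'', given a tensor representation $(\{X_s^a\},\{Y_t^b\},\psi)$ in local dimension $d$, take a singular value decomposition $\Psi=U\Sigma V^*$ of the matrix $\Psi$ associated with $\psi$, so $\Tr(\Sigma^2)=\|\psi\|^2=1$, and define the Hermitian positive semidefinite $d\times d$ matrices
\[
R_{(a,s)} := \Sigma^{1/2}(U^*X_s^aU)\Sigma^{1/2}, \qquad R_{(b,t)} := \Sigma^{1/2}(V^*(Y_t^b)^{\T}V)\Sigma^{1/2}.
\]
Let $M$ be their Gram matrix, indexed by $(A\times S)\sqcup(B\times T)$; then $M\in\CSP$ with $\hcpsd(M)\le d$, which is~\eqref{eqrank}. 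Since $\sum_a U^*X_s^aU=\sum_b V^*(Y_t^b)^{\T}V=I$, we get $\sum_a R_{(a,s)}=\sum_b R_{(b,t)}=\Sigma$ with $\Tr(\Sigma^2)=1$, which upon expanding inner products yields~\eqref{eqMsums}; and cyclicity of the trace gives $\langle R_{(a,s)},R_{(b,t)}\rangle=\Tr(\Psi^*X_s^a\Psi(Y_t^b)^{\T})=\psi^*(X_s^a\otimes Y_t^b)\psi=p(a,b|s,t)$, which is~\eqref{eqMprob}.

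The trace manipulations, and the checks that conjugation and transposition preserve Hermitian positive semidefiniteness, are routine. The two steps I expect to be the crux are the Cauchy--Schwarz rigidity observation of the first paragraph, which is precisely what allows a single shared state $\psi$ to work simultaneously for all pairs $(s,t)$, and, in ``$\Rightarrow$'', the realization that one must pass to the matrix $\Psi$ on $\C^d$ rather than conjugating by the rank-one projector $\psi\psi^*$ on $\C^d\otimes\C^d$ --- the latter would collapse the $(A\times S)\times(B\times T)$ block of $M$ to products of marginals, whereas the singular value decomposition of $\Psi$ produces factors of the correct size $d$ with the normalization $\Tr(\Sigma^2)=1$ built in. The only genuinely fiddly point is the degenerate case of a singular $P$ in ``$\Leftarrow$'', handled by restriction to $\operatorname{range}(P)$ and padding.
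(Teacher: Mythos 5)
The paper does not actually prove Theorem~\ref{thrm1}; it is imported from Sikora and Varvitsiotis \cite[Theorem 3.2]{Antonios:2015} (see also \cite{MR14}), so there is no in-paper argument to compare against. Your self-contained proof is correct and is essentially the standard one used in those references: identify $\psi\in\C^d\otimes\C^d$ with a $d\times d$ matrix $\Psi$ so that $\psi^*(X\otimes Y)\psi=\Tr(\Psi^*X\Psi Y^\T)$, use the marginal constraints~\eqref{eqMsums} together with the equality case of Cauchy--Schwarz to force all the partial sums $\sum_a R_{(a,s)}$ and $\sum_b R_{(b,t)}$ to coincide with a single psd matrix $P$ of unit Frobenius norm, and then translate between Gram factors and POVM elements by conjugating with $P^{\pm1/2}$ (for ``$\Leftarrow$'') or with $\Sigma^{1/2}$ from the singular value decomposition $\Psi=U\Sigma V^*$ (for ``$\Rightarrow$''). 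The trace computations and psd-preservation checks you invoke are all valid, the transpose on the $Y$-side is placed correctly (needed because $(X\otimes Y)\psi\leftrightarrow X\Psi Y^\T$), and the two points you flag as the crux are indeed the ones: the Cauchy--Schwarz rigidity that yields a common $\Psi$, and passing to the matrix $\Psi$ rather than the rank-one state $\psi\psi^*$. The only remaining bookkeeping is that $\hcpsd(M)\le d$ gives a Gram representation of some size $d'\le d$; applying your construction there and then embedding into local dimension $d$ is exactly the singular-$P$/padding step you already identify, handled correctly via $R\preceq P\Rightarrow\operatorname{range}(R)\subseteq\operatorname{range}(P)$.
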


Next we show how to construct from a bipartite correlation $C\in \Cor(m,n)$ a quantum correlation $p$, with $|A|=|B|=2$ and $S=[m]$, $T=[n]$, having the property that the smallest local dimension in which $p$ can be realized is lower bounded by the smallest local dimension of a tensor representation of $C$.

\begin{lemma} \label{borp}
Let   $C \in \Cor(m, n)$ and assume $C$ admits a tensor operator representation in local dimension $d$, but does not admit a tensor operator representation in smaller dimension.
Then there exists a quantum correlation $p$ defined on  $\{0,1\} \times \{0,1\} \times [m] \times [n]$,
satisfying the relations
\begin{equation}\label{corp}
C(s,t) = p(0,0|s,t) + p(1,1|s,t) - p(0,1|s,t) - p(1,0|s,t) \text{ for } s \in [m], t \in [n],
\end{equation}
that can be realized in local dimension $d$, but cannot be realized in smaller dimension.
\end{lemma}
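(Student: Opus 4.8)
The plan is to build the quantum correlation $p$ directly from a tensor operator representation of $C$ by the standard recipe that turns an observable with $\pm 1$ spectrum into a pair of PSD projectors summing to the identity. Concretely, let $(\{X_s\}_{s\in[m]},\{Y_t\}_{t\in[n]},\psi)$ be a tensor operator representation of $C$ in local dimension $d$, so that $X_s,Y_t$ are Hermitian $d\times d$ matrices with spectra in $[-1,1]$, $\psi\in\C^d\otimes\C^d$ is a unit vector, and $C(s,t)=\psi^*(X_s\otimes Y_t)\psi$. Without loss of generality the $X_s$ and $Y_t$ have spectra in $\{-1,1\}$: indeed by Theorem~\ref{Tsir} (equivalence $(1)\Leftrightarrow(2)$) and Corollary~\ref{remTsi} we may take the $\pm1$-valued representation coming from an irreducible Clifford representation, and its local dimension $2^{\lfloor r/2\rfloor}$ with $r=\rank(C)$ is the minimum by Corollary~\ref{cortensordim}; so "admits a tensor operator representation in local dimension $d$ but not smaller" forces $d=2^{\lfloor r/2\rfloor}$ and we are free to use this specific representation. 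Now set
\[
X_s^0=\tfrac12(I+X_s),\quad X_s^1=\tfrac12(I-X_s),\quad Y_t^0=\tfrac12(I+Y_t),\quad Y_t^1=\tfrac12(I-Y_t),
\]
which are Hermitian PSD (their eigenvalues are in $\{0,1\}$ since $X_s,Y_t$ have spectra in $\{-1,1\}$), and satisfy $X_s^0+X_s^1=Y_t^0+Y_t^1=I$, i.e.\ \eqref{eqqc1}. Define $p(a,b|s,t)=\psi^*(X_s^a\otimes Y_t^b)\psi$ for $a,b\in\{0,1\}$; this is \eqref{rqqc2}, and since each $X_s^a\otimes Y_t^b\succeq 0$ and they sum over $a,b$ to $I\otimes I$, the values $p(a,b|s,t)$ lie in $[0,1]$ and sum to $1$. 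Thus $p$ is a quantum correlation realizable in local dimension $d$.

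Next I verify the correlation identity \eqref{corp}. Expanding,
\[
\sum_{a,b\in\{0,1\}}(-1)^{a+b}\,X_s^a\otimes Y_t^b=\Bigl(\tfrac{I+X_s}{2}-\tfrac{I-X_s}{2}\Bigr)\otimes\Bigl(\tfrac{I+Y_t}{2}-\tfrac{I-Y_t}{2}\Bigr)=X_s\otimes Y_t,
\]
so $p(0,0|s,t)+p(1,1|s,t)-p(0,1|s,t)-p(1,0|s,t)=\psi^*(X_s\otimes Y_t)\psi=C(s,t)$, which is \eqref{corp}.

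It remains to show $p$ cannot be realized in local dimension smaller than $d$; this is the crux of the argument. Suppose $p$ is realized by $(\{\widetilde X_s^a\},\{\widetilde Y_t^b\},\widetilde\psi)$ in some local dimension $d'$, with $\widetilde X_s^0+\widetilde X_s^1=\widetilde Y_t^0+\widetilde Y_t^1=I$ and $p(a,b|s,t)=\widetilde\psi^*(\widetilde X_s^a\otimes\widetilde Y_t^b)\widetilde\psi$. Set $\widetilde X_s=\widetilde X_s^0-\widetilde X_s^1=2\widetilde X_s^0-I$ and $\widetilde Y_t=2\widetilde Y_t^0-I$; since $\widetilde X_s^0,\widetilde X_s^1\succeq 0$ with $\widetilde X_s^0+\widetilde X_s^1=I$, each $\widetilde X_s^0$ has spectrum in $[0,1]$, so $\widetilde X_s$ (and likewise $\widetilde Y_t$) has spectrum in $[-1,1]$. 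By the same expansion as above together with \eqref{corp}, $\widetilde\psi^*(\widetilde X_s\otimes\widetilde Y_t)\widetilde\psi=C(s,t)$ for all $s,t$. Hence $(\{\widetilde X_s\},\{\widetilde Y_t\},\widetilde\psi)$ is a tensor operator representation of $C$ in local dimension $d'$, and by hypothesis $d'\ge d$. This completes the proof.

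The main obstacle is purely bookkeeping: one must make sure the $\pm1$-spectrum normalization is legitimate on both sides — on the construction side it is, because we are free to pick our favourite tensor representation (the minimal Clifford one) when building $p$; on the lower-bound side the passage from $[0,1]$-spectrum projectors $\widetilde X_s^a$ to $[-1,1]$-spectrum observables $\widetilde X_s$ is automatic and loses nothing, so the reduction "small realization of $p$ $\Rightarrow$ small tensor representation of $C$" goes through cleanly. No deeper ingredient beyond Theorem~\ref{Tsir} and the definitions is needed here; the real content of the main theorem will come later when one feeds in the extreme bipartite correlation $C_1$ of Theorem~\ref{lemExtremePoint}(i) and invokes Corollary~\ref{cortensordim} to get the exponential lower bound.
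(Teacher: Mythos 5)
Your construction and the lower-bound reduction are exactly the paper's: define $X_s^a=\tfrac12\bigl(I+(-1)^aX_s\bigr)$, $Y_t^b=\tfrac12\bigl(I+(-1)^bY_t\bigr)$, set $p(a,b|s,t)=\psi^*(X_s^a\otimes Y_t^b)\psi$, and for the converse pass from any realization $\{\tilde X_s^a\},\{\tilde Y_t^b\},\tilde\psi$ of $p$ back to $\tilde X_s=\tilde X_s^0-\tilde X_s^1$, $\tilde Y_t=\tilde Y_t^0-\tilde Y_t^1$ with spectra in $[-1,1]$, obtaining a tensor operator representation of $C$ in the same dimension.

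The one place you deviate is the ``WLOG spectra in $\{-1,1\}$'' reduction, and that step is both unnecessary and not justified as stated. You invoke Corollary~\ref{cortensordim} to conclude that $d=2^{\lfloor r/2\rfloor}$ and hence the Clifford representation achieves the minimum, but Corollary~\ref{cortensordim} is stated only for extreme points of $\Cor(m,n)$, whereas Lemma~\ref{borp} is stated for arbitrary $C\in\Cor(m,n)$; for non-extreme $C$ the minimal local dimension could be strictly less than $2^{\lfloor r/2\rfloor}$, and then the Clifford representation is not the one you must use. Fortunately the reduction is not needed: if $X_s$ has spectrum in $[-1,1]$ then $X_s^a=\tfrac12\bigl(I+(-1)^aX_s\bigr)$ already has spectrum in $[0,1]$, which is all that positive semidefiniteness requires. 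Dropping the WLOG paragraph and simply noting ``$X_s^a,Y_t^b\succeq 0$ because the spectra of $X_s,Y_t$ lie in $[-1,1]$'' makes your proof identical to the paper's and valid for all $C$.
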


\begin{proof}
We first show the existence of a quantum correlation that satisfies \eqref{corp}. Let $C \in \Cor(m,n)$. By assumption there exists a unit vector $\psi \in \C^{d} \otimes \C^{d}$, and Hermitian $d \times d$ matrices $X_1,\ldots,X_m,Y_1,\ldots,Y_n$, whose spectra are contained in $[-1, 1]$, such that $C_{s,t} = \psi^* (X_s \otimes Y_t) \psi$ for all $s$ and $t$. We define the Hermitian positive semidefinite matrices 
\begin{equation}\label{eqXY}
X^a_s={I+(-1)^a X_s\over 2},\ Y^b_t={I+(-1)^b Y_t\over 2}\ \text{ for } a,b\in \{0,1\}.
\end{equation}
Using the fact that $X_s^0 + X_s^1 = Y_t^0 + Y_t^1 = I$, $X_s = X_s^0 - X_s^1$, and $Y_t = Y_t^0 - Y_t^1$, it follows that the function $p(a,b|s,t) = \psi^* (X_s^a \otimes Y_t^b) \psi$ is a quantum correlation that can be realized in local dimension $d$ and satisfies \eqref{corp}.

Assume that $p$ can be realized in dimension $k$, we show that $k\ge d$. As $p$ is realizable in dimension $k$  there exist a unit vector $\tilde\psi \in \C^k \otimes \C^k$ and Hermitian positive semidefinite $k \times k$ matrices $\{\tilde X_s^a\}$ and $\{\tilde Y_t^b\}$ such that 
\[
\sum_{a \in \{0,1\}} \tilde X_s^a = \sum_{b \in \{0,1\}} \tilde Y_t^b = I \quad \text{for all} \quad s \in S, t \in T,
\]
for which we have $p(a,b|s,t) = \tilde \psi^* ( \tilde X_s^a \otimes  \tilde Y_t^b)  \tilde\psi$. Observe that the spectrum of the operators $ \tilde X_s^a$ and $ \tilde Y_t^b$ is contained in $[0,1]$. We define $ \tilde X_s =  \tilde X_s^0 -  \tilde X_s^1,  \tilde Y_t =  \tilde Y_t^0- \tilde Y_t^1$. Then,  using \eqref{corp}, we can conclude  \[C_{s,t} =  \tilde \psi^*( \tilde X_s \otimes  \tilde Y_t) \tilde \psi.\] This means that $C$ has a tensor  operator representation in local dimension $k$ and thus, by the assumption of the lemma, $k \geq d$. 
\end{proof}

We can now prove our main theorem:

\begin{theorem*}
For each positive integer $k$, there exists a completely positive semidefinite matrix $M$ of size $4k^2 +2k +2$ with $\hcpsd(M) =  2^k$.
\end{theorem*}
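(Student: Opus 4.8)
The plan is to run the chain of reductions assembled in Sections~\ref{secCmn}--\ref{secfinal}: start from the high-rank extremal bipartite correlation matrix of Theorem~\ref{lemExtremePoint}(i), pass to a quantum correlation that requires large local dimension via Lemma~\ref{borp}, and finally convert it into a completely positive semidefinite matrix using Theorem~\ref{thrm1}. Concretely, I would set $r = 2k$ and let $C = C_1$ be the extreme point of $\Cor(r,\binom{r}{2}+1)$ of rank $r$ given by Theorem~\ref{lemExtremePoint}(i). Writing $m = r = 2k$ and $n = \binom{2k}{2}+1 = 2k^2-k+1$, the matrix $C$ is an extreme point of $\Cor(m,n)$ with $\rank(C) = 2k$, so Corollary~\ref{cortensordim} applies and the minimum local dimension of a tensor operator representation of $C$ is exactly $2^{\lfloor 2k/2\rfloor} = 2^k$. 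Taking $r$ even is precisely what makes this quantity equal to $2^k$ and keeps the eventual matrix size quadratic in $k$.

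Next I would apply Lemma~\ref{borp} to $C$ with $d = 2^k$. This yields a quantum correlation $p$ on $\{0,1\}\times\{0,1\}\times[m]\times[n]$ satisfying the relation \eqref{corp} which is realizable in local dimension $2^k$ but in no smaller local dimension. Invoking the forward implication of Theorem~\ref{thrm1}, there exists a completely positive semidefinite matrix $M$ with rows and columns indexed by $(\{0,1\}\times[m])\sqcup(\{0,1\}\times[n])$ satisfying \eqref{eqMprob} and \eqref{eqMsums} and with $\hcpsd(M) \le 2^k$. Counting indices gives the size of $M$ as $2m+2n = 4k + 2(2k^2-k+1) = 4k^2+2k+2$, as claimed.

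For the matching lower bound $\hcpsd(M) \ge 2^k$, I would use the converse implication of Theorem~\ref{thrm1} applied to the very matrix $M$ just produced: since $M$ is completely positive semidefinite, satisfies \eqref{eqMprob}--\eqref{eqMsums} for $p$, and has $\hcpsd(M) =: d'$, the correlation $p$ is realizable in local dimension $d'$. Because $p$ cannot be realized below local dimension $2^k$, this forces $d' \ge 2^k$, and hence $\hcpsd(M) = 2^k$.

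I do not expect a genuine obstacle at this stage, as the substantive work has already been carried out: the construction of the rank-$r$ extreme point $C_1$ in Theorem~\ref{lemExtremePoint}, the Clifford-algebra argument bounding the size of operator representations in Theorem~\ref{RankRep} and Corollary~\ref{cortensordim}, and the two-way dictionary between quantum correlations and $\CSP$-matrices in Theorem~\ref{thrm1}. The only point demanding slight care is the transfer of the lower bound: one must feed the specific matrix $M$ output by the forward direction of Theorem~\ref{thrm1} back into the converse direction, so that the non-realizability of $p$ in local dimension less than $2^k$ bounds $\hcpsd(M)$ itself rather than merely the $\hcpsd$ of some other matrix representing $p$.
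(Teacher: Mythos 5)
Your proposal is correct and follows essentially the same route as the paper: take $r=2k$, use the rank-$r$ extreme point $C_1\in\Cor(r,\binom{r}{2}+1)$ from Theorem~\ref{lemExtremePoint}(i), apply Corollary~\ref{cortensordim} and Lemma~\ref{borp} to get a quantum correlation that cannot be realized below local dimension $2^k$, and then invoke Theorem~\ref{thrm1} to produce an $M$ of size $2r+2n=4k^2+2k+2$. Your added care in feeding the specific output matrix $M$ back into the converse direction of Theorem~\ref{thrm1} to obtain $\hcpsd(M)\geq 2^k$ is exactly what the paper implicitly does when it states $\hcpsd(M)=d$.
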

\begin{proof}

Let $k$ be a positive integer, let $r = 2k$, and set $n  = \binom{r}{2}+1$. By Theorem~\ref{lemExtremePoint}(i) there exists an extreme point $C$ of $\Cor(r,n)$ with $\rank(C) = r$. Corollary~\ref{cortensordim}  tells us there exists a tensor operator representation of $C$ using  local dimension $d =2^{\lfloor r/2\rfloor}= 2^k$, and that there does not exist a smaller tensor operator representation.
Then, by Lemma~\ref{borp}, there exists a quantum correlation $p \colon \{0,1\} \times \{0,1\} \times [r] \times [n] \to [0, 1]$ that can be realized in local dimension $d$ and not in smaller dimension. Let $M$ be a completely positive semidefinite matrix constructed from $p$ as indicated in Theorem~\ref{thrm1}, so that  $\hcpsd(M) = d$ and the size of $M$ is $2r + 2n = r^2 + r + 2 = 4k^2 + 2k + 2$. 
\end{proof}

We note that by using Theorem~\ref{lemExtremePoint}(ii) we would get a matrix with the same completely positive semidefinite rank $2^k$, but with larger size $4k^2+6k+2$. Likewise, the result of \cite{Ji13} combined with Theorem \ref{thrm1} also leads to a matrix with the same completely positive semidefinite rank, but with larger size ($148k^2-58k$).  It is an open problem to find a family of completely positive semidefinite matrices where the ratio of the completely positive semidefinite rank to the matrix size is larger than in the above theorem. It is not possible to obtain such an improved family by the above method. Indeed, if $M$ is a completely positive semidefinite matrix with $
\hcpsd(M) = 2^k$, constructed from an extreme bipartite correlation matrix $C \in \Cor(m,n)$ as in the above theorem, then the size $2m+2n$ of $M$ is at least $4k^2 +2k+2$. To see this, note that, by Corollary~\ref{cortensordim} and the results in this section, $C$ has to have rank $2k$. Then, by Tsirelson's bound, $m+n-1 \geq {2k+1 \choose 2}$ and therefore $2m+2n \geq 4k^2+2k+2$.

\bigskip
\noindent {\bf Acknowledgements.} We are grateful to an anonymous referee for his/her careful reading and helpful comments, and for bringing the works \cite{Ji13,Slofstra11} to our attention.

\end{document}